\numberwithin{equation}{section}
\def\Re{\mathop{\mathrm{Re}}}
\def\Im{\mathop{\mathrm{Im}}}
\def\D{\mathbb D}
\def\H{\mathbb{H}}
\def\R{\mathbb R}
\def\Z{\mathbb Z}
\def\C{\mathbb C}
\def\Hol{{\sf Hol}}
\def\Aut{{\sf Aut}(\mathbb D)}
\def\N{\mathbb N}
\def\Q{\mathbb Q}
\def\id{{\sf id}}
\def\Arg{\mathop{\mathrm{Arg}}}
\newtheorem{theorem}{Theorem}[section]
\newtheorem{lemma}[theorem]{Lemma}
\newtheorem{proposition}[theorem]{Proposition}
\newtheorem{corollary}[theorem]{Corollary}
\theoremstyle{definition}
\newtheorem{definition}[theorem]{Definition}
\newtheorem{example}[theorem]{Example}
\theoremstyle{remark}
\newtheorem{remark}[theorem]{Remark}
\newtheorem{problem}[theorem]{Problem}
\numberwithin{equation}{section}
\newcommand{\Zen}{\mathcal{Z}}
\newcommand{\U}{\mathcal{U}}
\newcommand{\UH}{\mathbb{H}}
\newcommand{\Hr}{\mathbb{H}_{\mathrm{Re}}}
\newcommand{\UD}{\mathbb{D}}
\newcommand{\UC}{\partial\UD}
\newcommand{\Complex}{\mathbb{C}}
\newcommand{\Real}{\mathbb{R}}
\newcommand{\Natural}{\mathbb{N}}
\newcommand{\di}{\mathop{\mathrm{d}}\nolimits}
\newcommand{\anglim}{\angle\lim}
\renewcommand{\emptyset}{\varnothing}
\renewcommand{\ge}{\geqslant}
\renewcommand{\le}{\leqslant}
\renewcommand{\geq}{\geqslant}
\renewcommand{\leq}{\leqslant}
\newcommand{\proofof}[1]{{\fontseries{bx}\fontshape{it}\selectfont Proof of #1}}
\newcommand{\StepP}[1]{\medskip\noindent{\textsc{Proof of #1.}}}
\newcommand{\StepC}[2]{\medskip\noindent{\textsc{Case}~#1: #2.}}
\newcommand{\StepG}[1]{\medskip\noindent{\textsc{#1}}}
\newenvironment{ourlist}{\begin{enumerate}[label={\rm (\arabic*)}, ref={\rm (\arabic*)}, left=.7em]}{\end{enumerate}}
\newenvironment{romlist}{\begin{enumerate}[label={\rm (\roman*)}, ref={\rm (\roman*)}, left=.7em]}{\end{enumerate}}
\newenvironment{Romlist}{\begin{enumerate}[label={\bf (\Roman*)}, ref={\hbox{\rm (\hskip.07em\Roman*\hskip.07em)}}, left=.7em]}{\end{enumerate}}
\newenvironment{equilist}{\begin{enumerate}[label={\rm (\alph*)}, ref={\rm (\alph*)}, left=.7em]}{\end{enumerate}}
\newenvironment{statlist}{\begin{enumerate}[label={\bf (\Alph*)}, ref={\rm (\Alph*)}, left=0em]%
\everydisplay{\makeatletter\def\@eqnum{\normalfont(\theequation)}\makeatother}}{\end{enumerate}}
\newcommand{\dff}[1]{\textsl{#1}}
\begin{document}
\title[Extension of commutativity to fractional iterates]{Criteria for extension of commutativity to fractional iterates of holomorphic self-maps in the unit disc}

\author[M. D. Contreras]{Manuel D. Contreras $^\dag$}

\author[S. D\'{\i}az-Madrigal]{Santiago D\'{\i}az-Madrigal $^\dag$}
\address{Camino de los Descubrimientos, s/n\\
	Departamento de Matem\'{a}tica Aplicada II and IMUS\\
	Universidad de Sevilla\\
	Sevilla, 41092\\
	Spain.}\email{contreras@us.es} \email{madrigal@us.es}

\author[P. Gumenyuk]{Pavel Gumenyuk$^\ddag$}\address{Pavel Gumenyuk: Department of
Mathematics\\ Milano Politecnico, via E. Bonardi 9\\ Milan 20133, Italy.}
\email{pavel.gumenyuk@polimi.it}

\date\today

\subjclass[2020]{Primary 30C55, 37F44}
\keywords{Commuting univalent self-maps, fractional iterate, one-parameter semigroup of holomorphic functions, embeddability problem, boundary fixed point, hyperbolic petal, parabolic petal.}
\thanks{This research was supported in part by Ministerio de Innovaci\'on y Ciencia, Spain, project PID2022-136320NB-I00.}

\begin{abstract} Let $\varphi$ be a univalent non-elliptic self-map of the unit disc~$\UD$ and let $(\psi_{t})$ be a continuous one-parameter semigroup of holomorphic functions in $\UD$ such that $\psi_{1}\neq\id_\UD$ commutes with~$\varphi$. This assumption does not imply that all elements of the semigroup~$(\psi_t)$ commute with~$\varphi$. In this paper, we provide a number of sufficient conditions that guarantee that ${\psi_t\circ\varphi=\varphi\circ\psi_t}$ for all ${t>0}$: this holds, for example, if $\varphi$ and $\psi_1$ have a common boundary (regular or irregular) fixed point different from their common Denjoy\,--\,Wolff point~$\tau$, or when $\psi_1$ has a boundary regular fixed point ${\sigma\neq\tau}$ at which $\varphi$ is isogonal, or when $(\varphi-\id_\UD)/(\psi_1-\id_\UD)$ has an unrestricted limit at~$\tau$. In addition, we analyze how $\varphi$ behaves in the petals of the semigroup~$(\psi_t)$.
\end{abstract}

\maketitle

\tableofcontents

\section{Introduction and main results}
This paper is motivated by the following natural and quite old problem in discrete holomorphic iteration: given a holomorphic self-map of the unit disc, i.e. $\varphi\in\Hol(\D)$, determine or at least analyse those $\psi\in\Hol(\D)$ that commute with $\varphi$, see \cite[Section~4.10]{Abate2}. This kind of questions has also been treated in the framework of fractional iteration of holomorphic self-maps, with the aim to study all continuous one-parameter semigroups in the unit disc $(\psi_t)$ that commute with a given continuous one-parameter semigroup $(\varphi_t)$ in the sense that $\psi_t\circ\varphi_s=\varphi_s\circ\psi_t$ for all $t,s\geq0$, see for instance~\cite{conReich} and~\cite{conTauraso}.

In this paper, we tackle an intermediate situation, which has interesting implications in the discrete as well as in the fractional framework. Given $\varphi\in\U(\D)$, a univalent self-map of the unit disc, the \dff{centralizer}~$\Zen(\varphi)$ of~$\varphi$ is defined as
$$
 \Zen(\varphi):=\{\psi\in\U(\D):\varphi\circ\psi=\psi\circ\varphi\}, \quad \U(\UD):=\{\psi:\UD\to\UD~\text{~holomorphic injective}\}.
$$
The problem we are interested in is the following:
\begin{problem}\label{theproblem}
Fix $\varphi\in \U(\D)$ and a continuous one-parameter semigroup in the unit disc $(\psi_{t})$. Suppose that ${\psi_1\in\Zen(\varphi)}$, with $\psi_1\neq\id_\UD$. Does it necessarily follow that $\psi_{t}\in \Zen(\varphi)$ \textit{for all}~${t>0}$?  If not, provide  conditions on $\varphi$ and $\psi_1$~--- and maybe on some finite number of $\psi_t$'s~--- under which the relation ${\psi_1\in\Zen(\varphi)}$ implies that  the whole semigroup $(\psi_t)$ is contained in~$\Zen(\varphi)$.
\end{problem}
First of all,
it is worth mentioning that the assumption of $\varphi$ being univalent is completely natural and not restrictive. As we will see (Propositions~\ref{PR_simple} and~\ref{PR_elliptic-case}), univalence is a necessary condition for $\varphi$ to commute with a non-trivial continuous one-parameter semigroup.
We would also like to underline that the context of this problem is really different from the other two mentioned above because of its lack of symmetry.
In particular, for two continuous one-parameter semigroups in the unit disc, $(\varphi_t)$ and $(\psi_t)$, we can have ${\psi_1\in\Zen(\varphi_t)}$ for all ${t>0}$ and at the same time, ${\varphi_1\not\in\Zen(\psi_t)}$ for some~${t>0}$; see Example~\ref{Mainexample}. The same example shows that, in general, the answer to the question in Problem~\ref{theproblem} is negative.

When $\varphi$ is elliptic, the above Problem~\ref{theproblem} was implicitly solved by Cowen~\cite{Cowen-comm}. For the sake of completeness,  at the end of the paper we include a brief appendix with a very simple, and independent from Cowen's work,  solution for the elliptic case; see Proposition~\ref{PR_elliptic-case}.
In the rest of the paper, we restrict ourselves to the non-elliptic case.

In that non-elliptic case, it is not difficult to give a complete answer to Problem~\ref{theproblem} in terms of the Koenigs function~$H$ of the semigroup~$(\psi_t)$. Indeed, ${(\psi_t)\subset\Zen(\varphi)}$ if and only there exists ${c\in\C}$ such that ${H\circ \varphi}={H+c}$, see Proposition~\ref{PR_simple}. The latter condition means exactly that $\varphi$ is affine with respect to~$\psi_1$ (see Definition~\ref{Def:affine} and, in general, Section~\ref{SS_cummuting} for further details). Moreover, as a quite direct consequence of our results in~\cite{CDG-Centralizer}, we can answer the question in Problem~\ref{theproblem} positively if the semigroup $(\psi_t)$ is hyperbolic or parabolic of zero hyperbolic step. However, when $(\psi_t)$ is parabolic of positive hyperbolic step, giving any significant answer to Problem~\ref{theproblem} that does not involve~$H$ or the infinitesimal generator ${G=1/H'}$ is really not that easy.

In the following theorem, we summarize our positive answers to Problem~\ref{theproblem} including the two ones described in the former paragraph (as items \ref{IT_main:affine} and~\ref{IT_main:hyperbolic-or-parabolic-zero}). For the definition of isogonality at a boundary point involved in~\ref{IT_main:isogonal}, see Section~\ref{PetalosISO}. Other useful definitions can be found in the preliminaries, see Section~\ref{Notation}. Note that in our terminology every non-elliptic self-map is different from the identity map~$\id_\UD$. Furthermore, by a repelling fixed point we mean a boundary regular fixed point different from the Denjoy\,--\,Wolff point.

\begin{theorem}\label{Th_main}
Let $\varphi\in \U(\D)$ be non-elliptic and  $(\psi_{t})$ be a continuous one-parameter semigroup in~$\UD$ such that $\psi_1\neq\id_\UD$ and $\varphi\circ\psi_{1}=\psi_{1}\circ \varphi$. Assume that one of the following conditions holds:
\newcounter{auxcounter}
\begin{equilist}
\item\label{IT_main:affine}%
   $\varphi$ is affine with respect to $\psi_{1}$,
   \smallskip
\item\label{IT_main:hyperbolic-or-parabolic-zero}%
   $\psi_{1}$ is either hyperbolic or parabolic of zero hyperbolic step,
   \smallskip
\item\label{IT_main:two-values-of-t}%
   there exist $r\in (0,+\infty)\setminus \Q$ such that $\psi_{r}\in  \Zen (\varphi)$,
   \smallskip
\item\label{IT_main:unrestr_limit}%
   the limit
   $$
    \lim_{z\to\tau}\,\frac{\varphi(z)-z}{\,\psi_1(z)-z\,},
   $$
   where $\tau$ is the Denjoy\,--\,Wolff point of~$\psi_1$ (and hence also of~$\varphi$),
   exists unrestrictedly in~$\UD$,
   \smallskip
\item\label{IT_main:isogonal}%
   $\psi_1$ has a repelling fixed point at which $\varphi$ is isogonal,
\setcounter{auxcounter}{\arabic{enumi}}
\end{equilist}
\mbox{\,\,\,\,or}
\begin{equilist}\setcounter{enumi}{\arabic{auxcounter}}
\item\label{IT_main:boundary-fixed-pt}%
   $\varphi$ and $\psi_1$ have a common boundary fixed point different from the Denjoy\,--\,Wolff point.
\end{equilist}
\smallskip
\nopagebreak
Then,  $\psi_{t}\in \Zen(\varphi)$ for all $t>0$.
\end{theorem}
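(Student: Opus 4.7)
The plan is to unify all six cases via the Koenigs function $H$ of~$(\psi_t)$: by Proposition~\ref{PR_simple}, $(\psi_t)\subset\Zen(\varphi)$ if and only if $H\circ\varphi=H+c$ for some $c\in\C$. Set $\Phi:=H\circ\varphi-H$; combining Abel's equation $H\circ\psi_1=H+1$ with the commutation $\varphi\circ\psi_1=\psi_1\circ\varphi$, one immediately obtains $\Phi\circ\psi_1=\Phi$, so $\Phi$ is a $\psi_1$-invariant element of $\Hol(\UD)$. The whole theorem then reduces to showing that each of hypotheses~\ref{IT_main:affine}--\ref{IT_main:boundary-fixed-pt} forces $\Phi$ to be constant.

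Items~\ref{IT_main:affine} and~\ref{IT_main:hyperbolic-or-parabolic-zero} are the easy end of the spectrum: \ref{IT_main:affine} is the very definition of being affine with respect to~$\psi_1$, and for~\ref{IT_main:hyperbolic-or-parabolic-zero} I would invoke our earlier result in~\cite{CDG-Centralizer} asserting that, in the hyperbolic and the parabolic-zero-step cases, every $\psi_1$-invariant holomorphic function on~$\UD$ is constant. From this point on I may assume $(\psi_t)$ is parabolic of positive hyperbolic step. Pushing everything forward through~$H$, the Koenigs domain $\Omega:=H(\UD)$ is a strip-like domain invariant under the semigroup of translations $w\mapsto w+t$ for $t\ge 0$, and $\tilde\Phi:=\Phi\circ H^{-1}\colon\Omega\to\C$ is a holomorphic function satisfying $\tilde\Phi(w+s)=\tilde\Phi(w)$ whenever $\psi_s\in\Zen(\varphi)$ and $w,w+s\in\Omega$.

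For~\ref{IT_main:two-values-of-t}, the set $A:=\{s>0:\psi_s\in\Zen(\varphi)\}$ is closed under addition (since $\psi_{s+t}=\psi_s\circ\psi_t$) and closed in~$(0,+\infty)$ (the commutation relation survives locally uniform convergence); it contains~$1$ and the irrational~$r$. Restricting $\tilde\Phi$ to a horizontal ray $w_0+[0,+\infty)\subset\Omega$ yields a continuous function of $t\ge0$ that is simultaneously $1$-periodic and $r$-periodic; passing to the quotient $\R/\Z$ and using that the rotation by $r\bmod 1$ has dense orbits, this function must be constant, and by analytic continuation $\tilde\Phi\equiv\const$. For~\ref{IT_main:unrestr_limit}, writing $\Phi(z)=[H(\varphi(z))-H(z)]/[H(\psi_1(z))-H(z)]$ and expressing each difference as a short line integral of~$H'$, one compares ratios to conclude that $\Phi$ inherits the unrestricted limit at~$\tau$ assumed for $(\varphi-\id_\UD)/(\psi_1-\id_\UD)$. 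Since every $\psi_1$-orbit converges to~$\tau$ and $\Phi$ is constant on orbits, this unrestricted limit must equal $\Phi(z)$ everywhere, so $\Phi$ is a constant.

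The last two items~\ref{IT_main:isogonal} and~\ref{IT_main:boundary-fixed-pt} are the most delicate, and this is where I expect the main obstacle. In both cases the extra boundary point~$\sigma\neq\tau$ lies on the boundary of a hyperbolic or parabolic petal~$P$ of~$(\psi_t)$; via a Fatou coordinate $\eta\colon P\to\UH$ that conjugates~$\psi_1$ to the unit translation, $\Phi|_P$ descends to a holomorphic $1$-periodic function on~$\UH$ and further, through $w\mapsto e^{2\pi i w}$, to a holomorphic function on a punctured disk with possible singularities at the two ends of~$P$ (corresponding to~$\tau$ and~$\sigma$). The role of each hypothesis is to force regularity~--- and a definite value~--- of this function at the end corresponding to~$\sigma$: isogonality of~$\varphi$ at~$\sigma$ in~\ref{IT_main:isogonal} provides the necessary angular control on~$\Phi$, while in~\ref{IT_main:boundary-fixed-pt} a common boundary fixed point forces~$\varphi$ to preserve~$P$ and lets one extract a limiting value of~$\Phi$ at~$\sigma$. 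Combined with the behaviour at~$\tau$ analysed as in~\ref{IT_main:unrestr_limit}, this collapses the Fourier-like expansion of $\tilde\Phi$ to its constant term, making $\tilde\Phi$ constant on~$\eta(P)$, hence $\Phi$ constant on~$P$ and, by analytic continuation, on~$\UD$. The technical heart of the argument lies in the irregular fixed-point subcase of~\ref{IT_main:boundary-fixed-pt}, where standard angular-derivative tools are unavailable and one must extract the limiting behaviour of~$\tilde\Phi$ at~$\sigma$ from the petal geometry alone.
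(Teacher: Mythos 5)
Your reduction~--- put $\Phi:=H\circ\varphi-H$, note $\Phi\circ\psi_1=\Phi$, and show $\Phi$ is constant~--- is exactly the frame the paper uses via Proposition~\ref{PR_simple}, and your treatment of cases~\ref{IT_main:affine} and~\ref{IT_main:two-values-of-t} is correct (for~\ref{IT_main:two-values-of-t} it is essentially the paper's density argument for $\{k+jr\}$, run on a forward ray inside the Koenigs domain instead of on the extended base space). But two of your supporting claims do not hold as stated. For~\ref{IT_main:hyperbolic-or-parabolic-zero}, the lemma you invoke~--- that in the hyperbolic and zero-hyperbolic-step cases every $\psi_1$-invariant holomorphic function on~$\UD$ is constant~--- is false: $e^{2\pi i H}$ is $\psi_1$-invariant and non-constant. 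What \cite{CDG-Centralizer} actually provides (and what the paper cites) is that for a \emph{univalent} $\varphi$ in the centralizer the conjugate $H\circ\varphi\circ H^{-1}$ is an affine translation; constancy of~$\Phi$ comes from univalence and the self-map structure, not from invariance alone~--- a repairable, but real, misquotation. More seriously, in~\ref{IT_main:unrestr_limit} the step ``$\Phi$ inherits the unrestricted limit assumed for $(\varphi-\id_\UD)/(\psi_1-\id_\UD)$'' is precisely the hard analytic content of Theorem~\ref{TH_affine} and is not delivered by writing the differences as line integrals of~$H'$: along $\psi_1$-orbits the segments $[z,\varphi(z)]$ have bounded hyperbolic length, so Koebe-type distortion only gives $H(\varphi(z))-H(z)\asymp H'(z)\,(\varphi(z)-z)$ up to bounded multiplicative constants, not asymptotic equality of the two ratios. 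The paper needs the Pommerenke renormalized model together with \cite[Theorem~5.1]{BetCS} to prove that $R_{\psi_1,\varphi}\circ\psi_1^{\circ n}\to\Phi$ locally uniformly, which is what allows the orbitwise constancy of~$\Phi$ to meet the hypothesized limit; without that input (or without simply invoking Theorem~\ref{TH_affine} with the roles of $\varphi$ and $\psi_1$ swapped, as the paper does), case~\ref{IT_main:unrestr_limit} remains unproven.

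Cases~\ref{IT_main:isogonal} and~\ref{IT_main:boundary-fixed-pt} are only announced, and the plan for~\ref{IT_main:boundary-fixed-pt} would not go through as described: you assume the common fixed point~$\sigma$ lies on the boundary of a petal, but when $\sigma$ is an irregular (super-repelling) fixed point there is \emph{no} petal of~$(\psi_t)$ attached to it~--- petals have BRFPs as $\alpha$-points~--- so ``extracting the limiting behaviour of $\tilde\Phi$ at $\sigma$ from the petal geometry alone'' has nothing to work with in exactly the subcase you identify as the technical heart. The paper settles~\ref{IT_main:boundary-fixed-pt} through Theorem~\ref{TH_FixP}: the repelling case via the hyperbolic group on the petal and Heins' lemma, and the super-repelling case via the boundary behaviour $\Im H\to a$, $\Re H\to-\infty$ at~$\sigma$, which for positive hyperbolic step leads to $H\circ\varphi=H+F(e^{2\pi iH})$ with $\Im F$ vanishing on a circle (if $a>0$) or a.e.\ on $\partial\UD$ (if $a=0$), the latter subcase requiring a Herglotz--Nevanlinna representation of $w\mapsto w+F(e^{2\pi iw})$ plus a boundedness estimate~--- none of which is visible in your sketch. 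Likewise~\ref{IT_main:isogonal} is proved in the paper (Proposition~\ref{PR_isogonal1}) by comparing pre-models at the two repelling points, a composition-of-isogonal-maps lemma and Lemma~\ref{LM_isogonal} to equate the spectral values, and then a strip-width plus Heins argument to obtain affineness; your appeal to ``angular control killing the Fourier coefficients'' does not supply these ingredients. In sum, the proposal establishes~\ref{IT_main:affine} and~\ref{IT_main:two-values-of-t} (and \ref{IT_main:hyperbolic-or-parabolic-zero} after correcting the citation), but cases~\ref{IT_main:unrestr_limit}, \ref{IT_main:isogonal} and~\ref{IT_main:boundary-fixed-pt} contain genuine gaps.
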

In general, conditions \ref{IT_main:hyperbolic-or-parabolic-zero} and \ref{IT_main:unrestr_limit}\,--\,\ref{IT_main:boundary-fixed-pt} in Theorem~\ref{Th_main} are only sufficient ones.
At the same time, it is worth remarking that these conditions involve $\varphi$ and $\psi_1$, but do not depend on the knowledge of other elements of the semigroup $(\psi_t)$, its Koenigs map, or infinitesimal generator. A priori, this is not the case for the \textit{necessary and sufficient} condition~\ref{IT_main:affine}, see Definition~\ref{Def:affine}. However, in many cases our next result allows to bypass this difficulty.
\begin{theorem}\label{TH_affine}
	Let $\varphi\in\U(\UD)$ be parabolic of positive hyperbolic step, and let $\psi\in\Zen(\varphi)\setminus\{\id_\D\}$. Denote by $\tau$ the Denjoy\,--\,Wolff point  of~$\varphi$ and let
	$$
	R_{\varphi,\psi}(z):= \frac{\psi(z)-z}{\varphi(z)-z},\quad z\in\D.
	$$
	Then, the following statements hold.
	\begin{statlist}
		\item \label{IT_affine:limit-existence}
            The sequence $(R_{\varphi,\psi}\circ \varphi^{\circ n})$ converges locally uniformly in $\D$ to some function $f_{\varphi,\psi}\in\Hol(\D,\C)$.
		\item\label{IT_affine:affine-iff-const}
          $\psi$ is affine with respect to $\varphi$ \,\,if and only if\,\, the function $f_{\varphi,\psi}$ is constant in $\D$. In fact, in such a case, $f_{\varphi,\psi}(\zeta)= \angle\lim_{z\to\tau}\big(\psi(z)-z\big)/\big(\varphi(z)-z\big)$ for all ${\zeta\in\UD}$.
		\smallskip%
		\item\label{IT_affine:extension}%
           Let ${(S,h_\varphi,z\mapsto z+1)}$, where $S=\UH:={\{z\colon \Im z>0\}}$ or ${S=-\UH}$, stand for the canonical holomorphic model of~$\varphi$. Then $f_{\varphi,\psi}\circ h_\varphi^{-1}$ extends holomorphically to a map
		   $G:S\to S\cup\R$ with ${G(w+1)=G(w)}$ for all ${w\in S}$.
		\smallskip%
		\item\label{IT_affine:extension-bis}%
           The self-map $g(w) := w + G(w)$ is univalent in $S$, ${g(h_\varphi(\D))\subset h_\varphi(\D)}$, and
		    $$
                \psi\,=\,h_\varphi^{-1}\circ g\circ h_\varphi\,=\,%
                         h_\varphi^{-1}\circ \big(h_\varphi+f_{\varphi,\psi}\big).
		    $$
	\end{statlist}
\medskip
\end{theorem}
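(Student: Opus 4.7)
My plan is to work entirely in the canonical Koenigs model $(S,h_\varphi,w\mapsto w+1)$. Since $\varphi$ is parabolic of positive hyperbolic step, $h_\varphi$ is univalent, so $\Omega:=h_\varphi(\UD)$ is a simply connected subdomain of $S$ and I define
\[
g_0:=h_\varphi\circ\psi\circ h_\varphi^{-1}\colon\Omega\to\Omega,\qquad G_0:=g_0-\id_\Omega.
\]
Combining $h_\varphi\circ\varphi=h_\varphi+1$ with $\varphi\circ\psi=\psi\circ\varphi$ yields $g_0(w+1)=g_0(w)+1$, whence $G_0$ is $1$-periodic on $\Omega$. The structural property $S=\bigcup_{n\ge 0}(\Omega-n)$ of the canonical model, together with the forward invariance $\Omega+1\subset\Omega$, lets me extend $G_0$ uniquely to a $1$-periodic holomorphic function $G\colon S\to\C$, and correspondingly extend $g_0$ to $g(w):=w+G(w)$ satisfying $g(w+1)=g(w)+1$ on all of $S$.

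The crux is the containment $G(S)\subset S\cup\R$; take WLOG $S=\UH$. I first verify $g(S)\subset S$: for $w\in S$ pick $n\ge 0$ with $w+n\in\Omega$; then $\Im g(w)=\Im g(w+n)>0$ since $g(w+n)\in\Omega\subset\UH$. So $g$ is a holomorphic self-map of $\UH$ commuting with $\Tmap(w):=w+1$. I then argue $g$ has no fixed point in $\UH$: any such fixed point, translated forward into $\Omega$, would yield an interior fixed point $z_0\in\UD$ of $\psi$, making $\psi$ elliptic with Denjoy\,--\,Wolff point $z_0$; commutativity would then force $\varphi(z_0)=\lim_n\psi^{\circ n}(\varphi(z))=z_0$, contradicting the absence of interior fixed points of the parabolic~$\varphi$. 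Since $\Tmap$ is also fixed-point free in $\UH$, the classical theorem of Behan on commuting holomorphic self-maps forces $g$ and $\Tmap$ to share the same Denjoy\,--\,Wolff point, namely $\infty$; moreover, commutativity with the parabolic automorphism $\Tmap$ pins the angular derivative of $g$ at $\infty$ at $1$, so $g$ is parabolic at $\infty$. Julia's lemma at $\infty$ then shows that $g$ preserves every horocycle $\{\Im w>c\}$; equivalently $\Im g(w)\ge\Im w$ on $\UH$, so $\Im G(w)\ge 0$.

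Assertion \ref{IT_affine:limit-existence} now follows by a direct model computation. Using $\varphi^{\circ n}(z)=h_\varphi^{-1}(h_\varphi(z)+n)$ and, via the $1$-periodicity of $G$, $\psi(\varphi^{\circ n}(z))=h_\varphi^{-1}\bigl(h_\varphi(z)+n+G(h_\varphi(z))\bigr)$, with $u_n:=h_\varphi(z)+n$ one obtains
\[
R_{\varphi,\psi}\bigl(\varphi^{\circ n}(z)\bigr)=\frac{h_\varphi^{-1}\bigl(u_n+G(h_\varphi(z))\bigr)-h_\varphi^{-1}(u_n)}{h_\varphi^{-1}(u_n+1)-h_\varphi^{-1}(u_n)}.
\]
Since $\Re u_n\to+\infty$ along $\Omega$ (which for parabolic positive step contains a horodisc $\{\Im w>M\}$ by Baker--Pommerenke), the standard asymptotics of $h_\varphi^{-1}$ under horizontal escape make finite differences of fixed length asymptotically proportional to that length, so the ratio tends to $G(h_\varphi(z))$; local uniform convergence follows by Vitali's theorem applied to the locally bounded sequence of holomorphic functions. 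This gives $f_{\varphi,\psi}=G\circ h_\varphi$. Part \ref{IT_affine:affine-iff-const} is then immediate from Proposition~\ref{PR_simple}: $\psi$ is affine with respect to $\varphi$ iff $g$ is a translation iff $G$ is constant iff $f_{\varphi,\psi}$ is constant, and the constant value is the angular limit of $(\psi-\id_\UD)/(\varphi-\id_\UD)$ at $\tau$, read off any radial sequence. For \ref{IT_affine:extension-bis}, $g_0|_\Omega$ is univalent as a composition of univalent maps, and the $\Z$-equivariance $g(w+n)=g(w)+n$ propagates injectivity from $\Omega$ to all of $S$ (two distinct points with equal $g$-image can be translated simultaneously into $\Omega$ while preserving the equation); the containment $g(\Omega)\subset\Omega$ and the identity $\psi=h_\varphi^{-1}\circ g\circ h_\varphi=h_\varphi^{-1}\circ(h_\varphi+f_{\varphi,\psi})$ are built into the construction.

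The main obstacle will be establishing $G(S)\subset S\cup\R$: the argument via the commuting-maps theorem and Julia's lemma at $\infty$ requires first extending $g$ to all of $S$, then verifying it remains a self-map of $\UH$ without interior fixed points, and finally confirming that commutativity with the parabolic translation forces $g$ to be parabolic at $\infty$. Once this is in place, the remaining parts reduce to essentially formal computations inside the Koenigs model.
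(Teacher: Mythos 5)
Your reduction of part \ref{IT_affine:limit-existence} to the model identity
$R_{\varphi,\psi}\bigl(\varphi^{\circ n}(z)\bigr)=\bigl(h_\varphi^{-1}(u_n+a)-h_\varphi^{-1}(u_n)\bigr)\big/\bigl(h_\varphi^{-1}(u_n+1)-h_\varphi^{-1}(u_n)\bigr)$ with $u_n=h_\varphi(z)+n$ and $a=G(h_\varphi(z))$ is correct, but it is only a change of coordinates: since $h_\varphi^{-1}(u_n)=\varphi^{\circ n}(z)$, $h_\varphi^{-1}(u_n+1)=\varphi^{\circ(n+1)}(z)$ and $h_\varphi^{-1}(u_n+a)=\psi(\varphi^{\circ n}(z))$, the claim that this quotient tends to $a$ \emph{is} statement \ref{IT_affine:limit-existence}, so the entire analytic content is concentrated in the sentence you dismiss as ``standard asymptotics of $h_\varphi^{-1}$ under horizontal escape''. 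There is a genuine gap here, and the supporting claim is false: the Koenigs domain of a parabolic univalent map of \emph{positive} hyperbolic step need not contain any horodisc $\{\Im w>M\}$ (take $\Omega=\{x+iy:\,y>e^{-x}\}$, which satisfies $\Omega+1\subset\Omega$ and $\bigcup_n(\Omega-n)=\UH$); containing a half-plane corresponds to the presence of a parabolic petal in the semigroup setting, which may well be absent, and the Baker--Pommerenke result \cite{BakerPommerenke} concerns the zero-hyperbolic-step case. Nor is there an off-the-shelf distortion argument to fall back on: $\dist(u_n,\partial\Omega)\le\Im h_\varphi(z)$ stays bounded while the increments $1$ and $a$ are of comparable size, and the orbit $\varphi^{\circ n}(z)$ approaches $\tau$ \emph{tangentially}, so Koebe distortion does not linearize the differences. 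The asymptotic linearity of finite differences of $h_\varphi^{-1}$ along horizontal orbits is precisely what the paper extracts from Pommerenke's renormalization theorem (Theorem~\ref{modeloHr}, from \cite{Pom79}) combined with the rate-of-convergence result \cite[Theorem~5.1]{BetCS} (which supplies the factor tending to $1$); some such input must be proved or cited, and your Vitali step also presupposes a local boundedness of $R_{\varphi,\psi}\circ\varphi^{\circ n}$ that has not been established.

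The rest of your scheme is sound and partly follows a different route from the paper: you obtain the containment $G(S)\subset S\cup\R$ and the univalence of $g$ directly from the extension of $g_0$, Behan/Cowen and Julia's lemma at $\infty$, instead of citing \cite[Proposition~7.2]{CDG-Centralizer} as the paper does, and this is a legitimate alternative (only note that the fixed-point-free step should also cover the case of $\psi$ an elliptic automorphism, where $\psi^{\circ n}$ does not converge, which is easily handled since then $\varphi$ would fix the center). Two further small repairs in \ref{IT_affine:affine-iff-const}: the equivalence ``affine $\Leftrightarrow$ $G$ constant $\Leftrightarrow$ $f_{\varphi,\psi}$ constant'' follows from Definition~\ref{Def:affine} once $f_{\varphi,\psi}=G\circ h_\varphi$ is known (Proposition~\ref{PR_simple} is about semigroups and is not the right reference), and the identification of the constant with $\angle\lim_{z\to\tau}(\psi(z)-z)/(\varphi(z)-z)$ cannot be ``read off any radial sequence'': convergence along the (tangential) orbits does not give an angular limit, so you need Remark~\ref{RM_affine-leadCoeff=1}, i.e.\ formula~\eqref{EQ_formula-for-c}, as in the paper.
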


Coming back to Theorem~\ref{Th_main}, condition~\ref{IT_main:boundary-fixed-pt} is probably the most interesting and deepest result of the paper. In fact, according to the following theorem, this condition implies the stronger conclusion that $\varphi$ is an element of~$(\psi_t)$. We exclude from the statement hyperbolic automorphisms,
because in this case the result is essentially known: if $\varphi$ is a hyperbolic automorphism, then $(\psi_t)$ is a hyperbolic one-parameter group and all the three conditions in Theorem~\ref{TH_FixP} below hold, except that in condition~\ref{IT_FixP-in-the-semigroup} the words ``for some~${t_0>0}$'' have to be replaced by ``for some~${t_0\in\Real}$''; see e.g. \cite[Section 4.10]{Abate2} (see also \cite{CDG-Centralizer}).

\begin{theorem}\label{TH_FixP}
Let $\varphi\in\U(\UD)$ be a non-elliptic self-map different from a hyperbolic automorphism, and suppose that it has a boundary fixed point~$\sigma$ different from its Denjoy\,--\,Wolff point. Let $(\psi_t)$ be a continuous one-parameter semigroup such that ${\psi_1\in\Zen(\varphi)}\setminus\{\id_\D\}$. Then the following conditions are equivalent:
\begin{equilist}
\item\label{IT_FixP-in-the-semigroup} ${\varphi=\psi_{t_0}}$ for some~${t_0>0}$;
\item\label{IT_FixP_wholeCOPS-contained} $(\psi_t)\subset\Zen(\varphi)$;
\item\label{IT_FixP_common} $\sigma$ is a boundary fixed point also for~$(\psi_t)$.
\end{equilist}
\end{theorem}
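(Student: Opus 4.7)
The plan is to establish the four implications \ref{IT_FixP-in-the-semigroup}$\Rightarrow$\ref{IT_FixP_wholeCOPS-contained}, \ref{IT_FixP-in-the-semigroup}$\Rightarrow$\ref{IT_FixP_common}, \ref{IT_FixP_common}$\Rightarrow$\ref{IT_FixP_wholeCOPS-contained}, and the nontrivial \ref{IT_FixP_wholeCOPS-contained}$\Rightarrow$\ref{IT_FixP-in-the-semigroup}, which together yield all three equivalences. The first implication is immediate because elements of a continuous one-parameter semigroup commute pairwise. The second follows from the standard fact that, for a continuous one-parameter semigroup, the boundary fixed points of any non-identity element coincide with those of every other element (they are the boundary zeros of the infinitesimal generator). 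The third implication is a direct application of Theorem~\ref{Th_main}\ref{IT_main:boundary-fixed-pt}: $\sigma$ becomes a common boundary fixed point of $\varphi$ and $\psi_1$ distinct from their common Denjoy--Wolff point $\tau$, where the sharing of $\tau$ is the standard fact that two commuting non-elliptic self-maps of $\D$ have the same Denjoy--Wolff point.

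The heart of the proof is \ref{IT_FixP_wholeCOPS-contained}$\Rightarrow$\ref{IT_FixP-in-the-semigroup}. Assuming $(\psi_t)\subset\Zen(\varphi)$, Proposition~\ref{PR_simple} provides $c\in\C$ such that $H\circ\varphi=H+c$, where $H$ is the Koenigs function of $(\psi_t)$; since $H\circ\psi_t=H+t$, it suffices to show $c\in(0,+\infty)$, in which case $\varphi=\psi_c$. Writing $\Omega:=H(\D)$, one has $\Omega+c\subset\Omega$ and $\Omega+t\subset\Omega$ for every $t\ge0$. I would exploit the boundary fixed point $\sigma\neq\tau$ of $\varphi$ as follows. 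If $H$ admitted a finite angular limit $H^\ast$ at $\sigma$, the equation $H\circ\varphi=H+c$ combined with $\anglim_{z\to\sigma}\varphi(z)=\sigma$ would force $H^\ast=H^\ast+c$, contradicting $c\neq 0$. Hence $H$ diverges angularly at $\sigma$, so $\sigma$ corresponds to a prime end at infinity of $\Omega$. The asymptotic geometry of $\Omega$ at such an infinite prime end is controlled by horizontal translations~--- this is the maximal-strip (or petal) structure of the Koenigs domain developed in~\cite{CDG-Centralizer} and reflected in Theorem~\ref{TH_affine} via the model map $g(w)=w+G(w)$ on $S$~--- and the inclusion $\Omega+c\subset\Omega$, respecting this structure at $\sigma$, forces $\Im c=0$.

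Once $c\in\R$ is in hand, $c=0$ gives $\varphi=\id_\D$, contradicting non-ellipticity, and $c<0$ forces $(\psi_t)$ to be a one-parameter group of automorphisms (since $\varphi=\psi_c$ would invert $\psi_{|c|}$); but then either $\varphi$ is a hyperbolic automorphism, excluded by hypothesis, or $(\psi_t)$ is a parabolic group, in which case its only boundary fixed point is $\tau$, contradicting the existence of $\sigma\neq\tau$. Thus $c>0$ and $\varphi=\psi_c\in(\psi_t)$. The principal obstacle is the step forcing $\Im c=0$: translating the hypothesis ``$\varphi$ has a boundary fixed point at $\sigma\neq\tau$'' into an algebraic constraint on the translation vector $c$ requires a precise understanding of the prime ends at infinity of the Koenigs domain and of how non-Denjoy--Wolff boundary fixed points of the semigroup and of $\varphi$ correspond to maximal horizontal half-strips of $\Omega$; this will proceed by case analysis on the type of $\psi_1$ (hyperbolic, parabolic of zero hyperbolic step, parabolic of positive hyperbolic step), the last being the most delicate.
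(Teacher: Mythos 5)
Your decomposition breaks down at the implication \ref{IT_FixP_common}$\Rightarrow$\ref{IT_FixP_wholeCOPS-contained}: you obtain it as ``a direct application of Theorem~\ref{Th_main}\,\ref{IT_main:boundary-fixed-pt}'', but in the paper that item of Theorem~\ref{Th_main} is itself deduced from Theorem~\ref{TH_FixP} (see Section~\ref{S_proof-of-the-main-Theorem}), so the argument is circular. What you are importing as a black box --- that a common boundary fixed point ${\sigma\neq\tau}$ of $\varphi$ and $\psi_1$ forces commutation with the whole semigroup --- is precisely the deepest content of the theorem, and nothing in your proposal replaces the work the paper does for it. Note that under hypothesis~\ref{IT_FixP_common} alone you cannot invoke Proposition~\ref{PR_simple} to get ${H\circ\varphi=H+c}$ (that needs the full inclusion~\ref{IT_FixP_wholeCOPS-contained} as input); when $(\psi_t)$ is parabolic of positive hyperbolic step one only has ${H\circ\varphi=H+F(e^{2\pi iH})}$ with ${F\in\Hol(\UD,\UH\cup\Real)}$, and the paper must prove that $F$ is a positive real constant (maximum principle on a circle when ${a>0}$, a Herglotz-type representation when ${a=0}$), while in the repelling case it restricts $\varphi$ to the hyperbolic petal of~$\sigma$ (via Corollary~\ref{CR_petals-iff-points}), conjugates to a hyperbolic group and uses the description of centralizers of hyperbolic automorphisms from~\cite{CDG-Centralizer}. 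None of this is present in your sketch, so the equivalence of~\ref{IT_FixP_common} with the other two conditions remains unproven.

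The rest of your outline is sound, and in fact easier than the paper's hard direction. The implications \ref{IT_FixP-in-the-semigroup}$\Rightarrow$\ref{IT_FixP_wholeCOPS-contained} and \ref{IT_FixP-in-the-semigroup}$\Rightarrow$\ref{IT_FixP_common} are fine (for the latter, the common boundary fixed points of all ${\psi_t}$, ${t>0}$, are covered by \cite{CDP2004} and \cite{Analytic-flows}; your parenthetical ``boundary zeros of the generator'' is imprecise for super-repelling points but harmless). For \ref{IT_FixP_wholeCOPS-contained}$\Rightarrow$\ref{IT_FixP-in-the-semigroup}, Proposition~\ref{PR_simple} does give ${H\circ\varphi=H+c}$ globally, and the step you flag as ``the principal obstacle'' (${\Im c=0}$) can be closed without any prime-end machinery: your own Lehto--Virtanen argument gives ${H(\sigma)=\infty}$, hence $\sigma$ is a boundary fixed point of~$(\psi_t)$; if $\sigma$ is repelling, $H$ maps its hyperbolic petal (which $\varphi$ preserves, by Corollary~\ref{CR_petals-iff-points}) onto a maximal horizontal strip of finite width mapped into itself by ${w\mapsto w+c}$, forcing ${\Im c=0}$; if $\sigma$ is super-repelling, ${\Im H\to a\in\Real}$ unrestrictedly at~$\sigma$ and ${\Im c=a-a=0}$, exactly as in the paper's Case~II. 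Your exclusion of ${c\le0}$ then works as stated. But since the implication out of~\ref{IT_FixP_common} is not established independently, the proposal as written does not prove the theorem.
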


\begin{remark}\label{RM_hyperblic-simple}
If at least one of the self-maps $\varphi$ or $\psi_1$~--- and hence both of them\footnote{See \cite[Corollary~4.1]{Cowen-comm}.}~--- are hyperbolic, then all the equivalent conditions \ref{IT_FixP-in-the-semigroup}, \ref{IT_FixP_wholeCOPS-contained}, and \ref{IT_FixP_common} in the above theorem are automatically satisfied. This follows from the fact that by~\cite[Propositions~6.6 and~6.9]{CDG-Centralizer}, in the hyperbolic case, we have ${\Zen(\varphi)=\Zen(\psi_1)=\{\psi_t:t\ge0\}}$.
\end{remark}

\begin{remark}
 Let $\varphi$ and $\psi$ be two commuting holomorphic self-maps of~$\UD$ and suppose that $\varphi$ has a boundary fixed point~$\sigma$.  It is known (see, e.g., \cite{Filippo-puntosTAMS}; see also Remark~\ref{RM_after-the-example}) that in general,  $\sigma$ does not have to be a boundary fixed point for~$\psi$, even if we additionally assume that $\sigma$ is regular, $\varphi$ is univalent, and $\psi$ is embeddable in a continuous one-parameter semigroup. Therefore, the implications \ref{IT_FixP_wholeCOPS-contained}~$\Rightarrow$~\ref{IT_FixP_common} and \ref{IT_FixP_wholeCOPS-contained}~$\Rightarrow$~\ref{IT_FixP-in-the-semigroup} in the theorem stated above can be regarded as an illustration of a neat difference between commutativity with a self-map (and hence with all its natural iterates) and commutativity with all the fractional iterates.
\end{remark}

If a continuous one-parameter semigroup $(\psi_t)$ is contained in the centralizer of a non-elliptic self-map~$\varphi$, then by Theorem~\ref{TH_FixP} every boundary fixed point of~$\varphi$ has to be among  boundary fixed points of~$(\psi_t)$. The converse is not true: we may have a continuous one-parameter semigroup $(\psi_t)\subset\Zen(\varphi)$ with many boundary fixed points, while $\varphi$ has no boundary fixed point other than its Denjoy\,--\,Wolff point; see, e.g., Example~\ref{Mainexample} with $(\varphi_t)$ and $(\psi_t)$ interchanged. This leads us to the following natural question (in which, in fact, we impose some weaker  assumptions).

 \begin{problem}\label{theproblem2}
Let $\varphi\in\U(\UD)$ be a non-elliptic self-map different from a hyperbolic automorphism and let $(\psi_t)$ be a continuous one-parameter semigroup such that ${\psi_1\in\Zen(\varphi)\setminus\{\id_\UD\}}$. Further, suppose that ${\varphi\not\in\{\psi_t:t\ge0\}}$. Is there any relationship between $\varphi$ and the boundary fixed points of $(\psi_t)$ in this case?
\end{problem}

A natural way to attack this problem is through the concept of petal of a continuous one-parameter semigroup, concept intimately linked to the notion of boundary regular fixed point of such a semigroup. We refer the reader to Section~\ref{petals} for more details, or to the monograph \cite[Section 13]{BCD-Book} for a detailed exposition, which includes a number of related results. It is worth mentioning that this theory of petals plays also an important role in the proof of Theorem~\ref{TH_FixP}.

We begin by showing that, as a general fact, $\varphi$ always maps petals into petals and in a rather specific way. The degenerate case of $(\psi_t)$ being a one-parameter group is excluded from the statement of the theorem below; it is covered separately, see Remark~\ref{RM_group} in Section~\ref{S_petalsII}.

\begin{theorem}\label{petalEST} Let $\varphi\in\U(\D)$ be non-elliptic and let $(\psi_t)\not\subset\Aut$ be a continuous one-parameter semigroup in~$\UD$ with Denjoy\,--\,Wolff point ${\tau\in\partial\D}$ such that $\psi_1\in\Zen(\varphi) \setminus\{\id_\UD\}$.
    \begin{statlist}
		\item\label{IT_petalEST-hyperP}
Assume that $\Delta$ is a hyperbolic petal of $(\psi_t)$ with associated boundary repelling fixed point $\sigma\in\partial\D$. Then, one and only one of the following three statements holds:
		\begin{romlist}
          \item\label{IT_petalEST-hyperP(i)}
              $\varphi(\Delta)=\Delta$. In this case, there exists $t_0>0$ such that $\varphi=\psi_{t_0}$  and, in particular, $\sigma$ is a repelling fixed point of $\varphi$.
          \item\label{IT_petalEST-hyperP(ii)}
             $\Delta\cap\varphi(\Delta)=\emptyset$ and there exists a hyperbolic petal $\Delta^\prime$ of $(\psi_t)$ with associated  repelling fixed point $\sigma^\prime$ such that  $\varphi(\Delta)\subset\Delta^\prime$ and $\angle\lim_{z\to \sigma}\varphi(z)=\sigma^\prime$.
          \item\label{IT_petalEST-hyperP(iii)}
             $\Delta\cap\varphi(\Delta)=\emptyset$ and there exists a parabolic petal $\Delta^\prime$ of $(\psi_t)$ such that $\varphi(\Delta)\subset\Delta^\prime$ and $\angle\lim_{z\to \sigma}\varphi(z)=\tau$. In this case, $\sigma$ is an irregular contact point for~$\varphi$.
		\end{romlist}
\smallskip
		\item\label{IT_petalEST-paraP}
Assume that $\Delta$ is a parabolic petal of $(\psi_t)$. Then the following two statements hold:
		\begin{equilist}
			\item\label{IT_petalEST-paraP(a)}
                  $\varphi(\Delta)\subset \Delta$;
             \item\label{IT_petalEST-paraP(b)}
                  $\varphi(\Delta)=\Delta$ if and only if  $\varphi=\psi_{t_0}$ for some $t_0>0$.
		\end{equilist}
   \end{statlist}
\end{theorem}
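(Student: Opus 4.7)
The driving observations are (i) for any petal $\Delta$ of $(\psi_t)$, the image $\varphi(\Delta)$ is open, connected, and $\psi_1$-invariant, because $\psi_1(\Delta)=\Delta$ and $\psi_1\circ\varphi = \varphi\circ\psi_1$; (ii) on a hyperbolic (resp.\ parabolic) petal $\Delta$, the restricted family $(\psi_t|_\Delta)$ is a hyperbolic (resp.\ parabolic of zero hyperbolic step) one-parameter group of automorphisms of $\Delta\cong\UD$. The common strategy is to show first that $\varphi(\Delta)$ lies inside a single petal $\Delta'$ of $(\psi_t)$, and then to identify $\Delta'$ and to detect when $\Delta'=\Delta$. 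Distinct petals of $(\psi_t)$ are disjoint open sets, so the trichotomy in~\ref{IT_petalEST-hyperP} and the dichotomy in~\ref{IT_petalEST-paraP} follow at once from this identification.

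For part~\ref{IT_petalEST-hyperP}, the hyperbolic petal $\Delta$ has an associated boundary repelling fixed point $\sigma\in\partial\UD$. The first step is to produce the angular limit $\sigma':=\angle\lim_{z\to\sigma}\varphi(z)$ and to show that $\sigma'$ is a boundary fixed point of $(\psi_t)$; this uses the boundary analysis of commuting self-maps from Section~\ref{petals} together with the corresponding results in~\cite{CDG-Centralizer}. Then, tracing a single orbit of $(\psi_t|_\Delta)$ through a base point $z_0\in\Delta$, its $\varphi$-image is a $\psi_1$-invariant connected curve in $\UD$ accumulating at $\sigma'$ on one end and at $\tau$ on the other; by the orbit characterisation of petals (see Section~\ref{petals}), this curve lies in a uniquely determined petal $\Delta'$ of $(\psi_t)$, and by connectedness $\varphi(\Delta)\subset\Delta'$. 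If $\sigma'\in\partial\UD\setminus\{\tau\}$, then $\Delta'$ is the hyperbolic petal associated with $\sigma'$; if $\sigma'=\tau$, then $\sigma$ is by definition an irregular contact point of~$\varphi$ and $\Delta'$ is a parabolic petal. When $\Delta'\neq\Delta$, disjointness gives case~\ref{IT_petalEST-hyperP(ii)} or~\ref{IT_petalEST-hyperP(iii)}. When $\Delta'=\Delta$, the restriction $\varphi|_\Delta:\Delta\to\Delta$ is a non-elliptic univalent self-map commuting with the hyperbolic group $(\psi_t|_\Delta)$; applying Theorem~\ref{Th_main}\ref{IT_main:hyperbolic-or-parabolic-zero} inside $\Delta$, one deduces that $\varphi|_\Delta$ commutes with every $\psi_t|_\Delta$, and the centralizer characterisation for hyperbolic groups (Remark~\ref{RM_hyperblic-simple} and~\cite{CDG-Centralizer}) forces $\varphi|_\Delta=\psi_{t_0}|_\Delta$ for some real $t_0$. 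A Denjoy--Wolff argument (the iterates of $\varphi$ in $\Delta$ must converge to $\tau$, not to $\sigma$) rules out $t_0\le 0$, and the identity principle extends $\varphi=\psi_{t_0}$ to all of $\UD$, yielding~\ref{IT_petalEST-hyperP(i)}.

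For part~\ref{IT_petalEST-paraP}, the analogous scheme applies with $\sigma$ replaced by $\tau$. Both asymptotic ends of an orbit of $(\psi_t|_\Delta)$ accumulate at $\tau$, and the key point is that the only petal of $(\psi_t)$ compatible with the parabolic orbit asymptotics carried by $\varphi(\Delta)$ at $\tau$ is $\Delta$ itself: parabolic petals at $\tau$ are indexed by the ``side'' of approach, which is preserved by $\varphi$ because of its commutation with $\psi_1$ and the fact that $\tau$ is also the Denjoy--Wolff point of~$\varphi$. This gives $\varphi(\Delta)\subset\Delta$, proving~\ref{IT_petalEST-paraP(a)}. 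For~\ref{IT_petalEST-paraP(b)}, equality $\varphi(\Delta)=\Delta$ makes $\varphi|_\Delta$ an automorphism of $\Delta$ commuting with the parabolic-zero-step group $(\psi_t|_\Delta)$; as in part~\ref{IT_petalEST-hyperP(i)}, Theorem~\ref{Th_main}\ref{IT_main:hyperbolic-or-parabolic-zero} and the corresponding centralizer description from~\cite{CDG-Centralizer} force $\varphi|_\Delta=\psi_{t_0}|_\Delta$ with $t_0>0$, which extends to $\varphi=\psi_{t_0}$ on $\UD$ by the identity theorem. The converse implication is immediate.

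The main obstacle is the opening step in each part: producing the angular limit of $\varphi$ at the associated boundary fixed point of the petal and proving that $\varphi(\Delta)$ is trapped inside a single petal of $(\psi_t)$. This boils down to an angular-limit analysis that exploits only the commutativity with $\psi_1$ together with the description of petals via orbit asymptotics; ruling out ``petal-crossing'' images requires the connectedness of $\varphi(\Delta)$ and the disjointness of distinct petals. The parabolic case in~\ref{IT_petalEST-paraP} is additionally subtle because the target petal must be identified with $\Delta$ itself, not merely with some parabolic petal at~$\tau$, which relies on tracking the side of approach preserved by $\varphi$.
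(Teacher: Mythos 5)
Your overall architecture is the same as the paper's (show $\varphi$ maps a petal into a single petal, identify the target petal via the associated boundary point, and use rigidity inside an invariant petal), but several decisive steps are asserted rather than proved, and two of your justifications are wrong. First, the ``trapping'' step: connectedness of $\varphi(\Delta)$ plus disjointness of petals only yields $\varphi(\Delta)\subset\Delta'$ \emph{after} you know that $\varphi(\Delta)$ is contained in the union of the petals, i.e.\ in $\mathcal W^\circ$; a connected set meeting $\Delta'$ could perfectly well leave $\mathcal W^\circ$. Tracing the image of one orbit does not give this. What is needed is the invariance $\varphi(\mathcal W)\subset\mathcal W$ (the paper's Lemma~\ref{LM_petal-into-petal}); note it has a one-line proof from commutation with $\psi_1$ alone, since $\mathcal W=\bigcap_{n\in\N}\psi_1^{\circ n}(\UD)$ and $\varphi\circ\psi_1^{\circ n}=\psi_1^{\circ n}\circ\varphi$, after which openness of $\varphi$ and connectedness finish the job. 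Also, in alternative~\ref{IT_petalEST-hyperP(iii)} the point $\sigma$ is \emph{not} ``by definition'' an irregular contact point: a contact point with $\varphi(\sigma)=\tau\neq\sigma$ could a priori have finite angular derivative; the paper deduces $\varphi'(\sigma)=\infty$ from \cite[Lemma~8.2]{Cowen-Pommerenke}, using $\sigma\neq\tau$ and $\varphi'(\tau)\neq\infty$.

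Part~\ref{IT_petalEST-paraP} contains the more serious gaps. For~\ref{IT_petalEST-paraP(a)}, when $(\psi_t)$ has two parabolic petals your claim that $\varphi$ ``preserves the side of approach'' is exactly what has to be proved, and commuting with $\psi_1$ plus sharing the Denjoy--Wolff point is not an argument; the paper excludes the swap of the two parabolic petals by noting that in that case $(\psi_t)$ has zero hyperbolic step, so $H\circ\varphi=H+c$ by \cite[Proposition~4.3]{CDG-Centralizer}, and a translation cannot map one of the two half-planes $H(\Delta_1),H(\Delta_2)$ into the other. For~\ref{IT_petalEST-paraP(b)}, Theorem~\ref{Th_main}\,\ref{IT_main:hyperbolic-or-parabolic-zero} cannot be invoked inside $\Delta$: the restriction $\psi_1|_\Delta$ is a \emph{parabolic automorphism} of $\Delta$, hence of positive hyperbolic step, so hypothesis~\ref{IT_main:hyperbolic-or-parabolic-zero} fails there (it does hold in the hyperbolic-petal case~\ref{IT_petalEST-hyperP(i)}, where $\psi_1|_\Delta$ is a hyperbolic automorphism). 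Moreover your ``as in part~\ref{IT_petalEST-hyperP(i)}'' argument for $t_0>0$ breaks down in a parabolic petal, because both forward and backward orbits of $(\psi_t|_\Delta)$ tend to $\tau$, so the Denjoy--Wolff comparison no longer distinguishes $\psi_{t_0}|_\Delta$ from its inverse. One must argue differently, e.g.\ as the paper does: in the Koenigs model the conjugated map equals $w\mapsto w+c$ on the half-plane $H(\Delta)$, hence on all of $S$, and $c\ge0$ by \cite[Theorem~3.1\,(B)]{CDG-Centralizer}; alternatively, $t_0<0$ would give $\psi_{-t_0}\circ\varphi=\id_\UD$ by the identity principle, forcing $\psi_{-t_0}\in\Aut$ and contradicting $(\psi_t)\not\subset\Aut$. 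With these repairs your scheme does reduce to essentially the paper's proof, but as written the key exclusions are missing or rest on inapplicable statements.
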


In the hyperbolic case, by the reason mentioned in Remark~\ref{RM_hyperblic-simple},  only alternative~\ref{IT_petalEST-hyperP(i)} may occur, and moreover, \ref{IT_petalEST-paraP} becomes trivial because a hyperbolic semigroup cannot have parabolic petals.

In the parabolic case, based on Theorem~\ref{petalEST}, we are then able to give a quite complete answer to  Problem~\ref{theproblem2} as follows.
We again exclude from consideration the case $(\psi_t)\subset\Aut$, in which the result, with some obvious modifications,  holds trivially.
\begin{theorem}\label{Cor:petals} Let $\varphi\in\U(\D)$ be a non-elliptic self-map with Denjoy\,--\,Wolff point ${\tau\in\partial\UD}$ and let ${(\psi_t)\not\subset\Aut}$ be a continuous one-parameter semigroup in~$\UD$  such that ${\psi_1\in\Zen(\varphi)\setminus\{\id_\UD\}}$. Suppose that $\varphi$ is not an element of~$(\psi_t)$.  Then the following statements hold.
	\begin{Romlist}
        \item\label{IT_CorNew:if-hyperbolic-petal}
           Both $\varphi$ and $(\psi_t)$ are parabolic. Moreover, $(\psi_t)$ has at most one parabolic petal.
\medskip
		\item\label{IT_CorNew:petals-finite-seq}
            Suppose $(\psi_t)$ has a parabolic petal $\Delta_*.$ Then for each hyperbolic petal~$\Delta$ of~$(\psi_t)$, there exist a finite collection $\Delta_1,\,\Delta_2,...,\,\Delta_n\,$ of pairwise disjoint hyperbolic petals of $(\psi_t)$  such that
		$$
          \Delta_1=\Delta,\quad \varphi(\Delta_k)\subset \Delta_{k+1},\ k=1,...,n-1,\quad \varphi(\Delta_n)\subset\Delta_*.
		$$
         Moreover,
		$$
		  \varphi(\sigma_k)=\sigma_{k+1},\ k=1,...,n-1,\quad \varphi(\sigma_n)=\tau,
		$$
         where  $\sigma_k$ $(k=1,...,n)$ stands for the repelling fixed point associated with~$\Delta_k$.
\medskip
		\item\label{IT_CorNew:petals-inf-seq}
          Suppose that $(\psi_t)$ has no parabolic petal. Then for each hyperbolic petal~$\Delta$ of~$(\psi_t)$, there exists a sequence $(\Delta_n)$ of pairwise disjoint hyperbolic petals of $(\psi_t)$ such that
		$$
         \Delta_1=\Delta,\quad \varphi(\Delta_n)\subset \Delta_{n+1},\quad n\in\N. \qquad\qquad\qquad\qquad\quad\mbox{~}
		$$
            Moreover,
		$$
		  \varphi(\sigma_n)=\sigma_{n+1},\ n\in\N,\quad\text{~and}\quad \lim_{n\to\infty}\sigma_n=\tau,
		$$
          where  $\sigma_n$ $(n\in\Natural)$ stands for the repelling fixed point associated with~$\Delta_n$.
	\end{Romlist}
\end{theorem}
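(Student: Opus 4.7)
My plan is to derive \ref{IT_CorNew:if-hyperbolic-petal}--\ref{IT_CorNew:petals-inf-seq} as iterative consequences of Theorem~\ref{petalEST}, combined with Remark~\ref{RM_hyperblic-simple} and the Cowen result quoted there. For \ref{IT_CorNew:if-hyperbolic-petal}, if $(\psi_t)$ were hyperbolic then Remark~\ref{RM_hyperblic-simple} would give $\varphi=\psi_{t_0}$, contradicting the hypothesis, hence $(\psi_t)$ is parabolic, and then $\varphi$ is parabolic too because commuting non-elliptic self-maps share the same dynamical type. For uniqueness of the parabolic petal I argue by contradiction: if $\Delta_*,\Delta_*'$ were two distinct parabolic petals, Theorem~\ref{petalEST}\ref{IT_petalEST-paraP} together with $\varphi\notin\{\psi_t:t\ge0\}$ would give strict self-embeddings $\varphi(\Delta_*)\subsetneq\Delta_*$ and $\varphi(\Delta_*')\subsetneq\Delta_*'$; in the canonical model of Theorem~\ref{TH_affine}, these two petals correspond to $\Omega=H(\UD)$ meeting the two opposite edges of the narrowest horizontal strip containing $\Omega$, and the simultaneous strict self-embedding of both by $g(w)=w+G(w)$ with periodic $G$ would force $G$ to be a real constant, whence $\varphi\in(\psi_t)$, a contradiction.

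For \ref{IT_CorNew:petals-finite-seq} and \ref{IT_CorNew:petals-inf-seq} I set $\Delta_1:=\Delta$ and iterate Theorem~\ref{petalEST}\ref{IT_petalEST-hyperP}. Alternative \ref{IT_petalEST-hyperP(i)} is always excluded by $\varphi\notin\{\psi_t:t\ge0\}$. In alternative \ref{IT_petalEST-hyperP(ii)} I take $\Delta_{k+1}$ to be the petal $\Delta'$ produced by the theorem, with $\sigma_{k+1}=\varphi(\sigma_k)$; in alternative \ref{IT_petalEST-hyperP(iii)} the parabolic target must be the unique petal $\Delta_*$ from \ref{IT_CorNew:if-hyperbolic-petal} and we get $\varphi(\sigma_n)=\tau$, terminating the construction. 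Pairwise disjointness of the $\Delta_k$'s is equivalent to distinctness of the associated repelling fixed points $\sigma_k$, which can be obtained by ruling out periodic orbits of $\varphi$ among the boundary regular fixed points of $\psi_1$, e.g.~using Theorem~\ref{Th_main}\ref{IT_main:boundary-fixed-pt} applied to a suitable iterate $\varphi^p$ to force $\varphi^p\in(\psi_t)$ and then, via a root argument based on \ref{IT_main:two-values-of-t}, $\varphi\in(\psi_t)$ itself. In case \ref{IT_CorNew:petals-inf-seq}, alternative \ref{IT_petalEST-hyperP(iii)} is unavailable, so the iteration never halts; and $\sigma_n\to\tau$ because the boundary regular fixed points of the univalent map $\psi_1$ form an at most countable set whose only possible accumulation point in $\oD$ is $\tau$, so every subsequential limit of $(\sigma_n)$ equals $\tau$, and compactness of $\oD$ yields convergence of the whole sequence.

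The main obstacle is the finiteness of the sequence in \ref{IT_CorNew:petals-finite-seq}. I would argue again through the canonical model of Theorem~\ref{TH_affine}: in positive hyperbolic step (forced by the existence of $\Delta_*$), $\Omega=H(\UD)$ lies in a narrowest horizontal strip $\{|\Im w|<M/2\}$; the parabolic petal $\Delta_*$ corresponds to a component of $\Omega$ asymptotic to one edge of that strip, while each hyperbolic petal $\Delta_k$ corresponds to a maximal horizontal half-strip embedded in $\Omega$ at an interior height $y_k$. Under the conjugated action $g(w)=w+G(w)$ with $G$ of period~$1$, the sequence of heights $(y_k)$ drifts monotonically towards the extremal height of $\Delta_*$ with a uniform lower bound on each step, a non-degeneracy consequence of $\varphi\notin\{\psi_t:t\ge0\}$. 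Since only finitely many such interior heights can accommodate maximal horizontal half-strips of $\Omega$ between $y_1$ and the extremal height occupied by $\Delta_*$, the iteration must reach $\Delta_*$ in finitely many steps.
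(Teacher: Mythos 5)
Your overall skeleton (iterate Theorem~\ref{petalEST}, exclude alternative \ref{IT_petalEST-hyperP(i)} by the hypothesis ${\varphi\notin\{\psi_t\}}$, rule out periodic orbits to get disjointness) is the right one, but several of the steps you rely on are not correct as stated. The most serious is the finiteness argument in \ref{IT_CorNew:petals-finite-seq}: the existence of a parabolic petal does \emph{not} force $(\psi_t)$ to be of positive hyperbolic step (indeed two parabolic petals force \emph{zero} hyperbolic step, and a single parabolic petal is perfectly compatible with zero hyperbolic step), so your whole ``narrowest strip'' picture is unavailable in general and the zero-hyperbolic-step case is left unhandled; the paper treats the two cases separately, via ${H\circ\varphi=H+c}$ with ${\Im c>0}$ in the zero-step case, and via the semi-conjugation $e^{2\pi i g(\zeta)}=f(e^{2\pi i\zeta})$ with ${f(0)=0}$, ${|f'(0)|<1}$ (so that $\Im g^{\circ n}\to+\infty$) in the positive-step case. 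Even within positive hyperbolic step, your counting step is invalid: hyperbolic petals correspond to maximal horizontal \emph{strips} of the Koenigs domain, and infinitely many such strips (of shrinking widths) can lie between two fixed heights, so ``only finitely many interior heights can accommodate them'' is not a proof; and the ``uniform lower bound on the drift per step'' is asserted rather than derived (it does follow from periodicity of $G$ and the minimum principle, but you must prove it, and then still combine it with Lemma~\ref{LM_petal-into-petal} to land the whole petal inside $\Delta_*$). For the same reason your uniqueness-of-the-parabolic-petal argument in \ref{IT_CorNew:if-hyperbolic-petal} misfires: with two parabolic petals the Koenigs domain contains two opposite half-planes, there is no bounding horizontal strip, and Theorem~\ref{TH_affine} (which assumes $\varphi$ parabolic of positive hyperbolic step) does not apply; the correct short argument is the zero-step affinity ${H\circ\varphi=H+c}$ with $c$ real and ${c\ge0}$, i.e.\ Lemma~\ref{LM-only-one-parabolic-petal}.

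Two further gaps. For pairwise disjointness, from ${\varphi^{\circ p}=\psi_{t_0}}$ your ``root argument based on~\ref{IT_main:two-values-of-t}'' only works when $t_0$ is irrational (if $t_0\in\Q$ you cannot produce the irrational time needed there, and Theorem~\ref{Th_main}\,\ref{IT_main:boundary-fixed-pt} applies to $\varphi^{\circ p}$, not to $\varphi$, since $\sigma_m$ is not fixed by $\varphi$ itself); what is actually needed is uniqueness of univalent non-elliptic roots, i.e.\ Lemma~\ref{LM_uniqueness}, which gives $\varphi=\psi_{t_0/p}$ directly. Finally, in \ref{IT_CorNew:petals-inf-seq} your proof that $\sigma_n\to\tau$ rests on the claim that the boundary regular fixed points of $\psi_1$ can only accumulate at $\tau$; this is not a valid general fact --- the Cowen\,--\,Pommerenke inequality permits accumulation at other boundary points provided the multipliers $\psi_1'(\sigma_n)$ blow up, and such configurations occur (strips of shrinking width). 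The missing input is Bracci's monotonicity result \cite[Theorem~1.4]{Filippo-puntosTAMS}, which gives ${\psi_1'(\sigma_n)\le\psi_1'(\sigma_1)}$ along the $\varphi$-orbit; only then does the Cowen\,--\,Pommerenke inequality \cite[Theorem~4.1\,(iii)]{Cowen-Pommerenke} force ${|\tau-\sigma_n|\to0}$.
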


\begin{remark}
The above two theorems imply that every repelling fixed point~$\sigma$ of ${\psi:=\psi_1}$ is a contact point for~$\varphi$ and that $\varphi(\sigma)$ is also a boundary regular fixed point of~$\psi$. Moreover, if ${\varphi(\sigma)\neq\sigma}$, then the (forward) orbit of~$\sigma$ w.r.t. $\varphi$ (extended to a.e.\,point of~$\partial\UD$ by angular limits)  either hits the Denjoy\,--\,Wolff point within finite time, or it consists of pairwise distinct repelling fixed points of~$\psi$, but tends to the Denjoy\,--\,Wolff point in the limit. In part, this resembles the situation with two general commuting holomorphic self-maps $\varphi,\psi\in\Hol(\UD)$ studied by Bracci~\cite{Filippo-puntosTAMS}. In this general case, $\varphi$ can have a repelling cycle consisting of repelling fixed points of~$\psi$, but this possibility is ruled out if $\varphi$ is univalent and non-elliptic, see \cite[Proposition~5.2]{Filippo-puntosTAMS}. At the same time, the results established in~\cite{Filippo-puntosTAMS} do not seem to exclude some other situations not occurring in our case, such as a possibility that the orbit of~$\sigma$ hits within finite time a common fixed point of $\varphi$ and~$\psi$ different from their Denjoy\,--\,Wolff point. Moreover, in contrast to the context of Theorem~\ref{Cor:petals},  it is not clear in general whether any two orbits starting from different repelling fixed points of~$\psi$ necessarily fall into the same category.
\end{remark}

Theorems~\ref{petalEST} and~\ref{Cor:petals} are proved in Section~\ref{S_petalsII}. Further, in Section~\ref{PetalosISO}, we consider the last two alternatives given in Theorem~\ref{petalEST}\,\ref{IT_petalEST-hyperP} and analyse in which case equality ${\varphi(\Delta)=\Delta'}$ occurs. This will be used, along with several other results, in the proof of Theorem~\ref{Th_main} given in Section~\ref{S_proof-of-the-main-Theorem}. Finally, in Section~\ref{Sec:commutingsemigroups}, based on our findings, we substantially improve a result by Elin et al~\cite[Sect.\,5]{conReich} on sufficient conditions for two parabolic continuous one-parameter semigroups $(\varphi_t)$ and $(\psi_t)$ to commute, given that ${\varphi_1\circ\psi_1}={\psi_1\circ\varphi_1}$.

The part of the paper preceding Sections~\ref{S_petalsII}\,--\,\ref{Sec:elliptic case}, the content of which we have already described, is organized as follows. In Section~\ref{Notation}, we recall some preliminaries from holomorphic dynamics that will be needed to follow the paper. In Section~\ref{theexample}, we present in detail an example related to Problem~\ref{theproblem} and already mentioned above. In the same section, we establish a characterisation for holomorphic self-maps~$\varphi$ commuting with a non-elliptic semigroup~$(\psi_t)$ in terms of the Koenigs function of~$(\psi_t)$. In the next section, Section~\ref{S_affine}, we prove Theorem~\ref{TH_affine}, which yields another characterisation in terms of the boundary behaviour of $\psi_1$ and $\varphi$ along forward orbits under the discrete dynamics of~$\psi_1$. Section~\ref{petals} contains  a brief survey on the theory of petals and some lemmata relating commutativity to petals, which we use in the proof of Theorem~\ref{TH_FixP} given in Section~\ref{Sec6}, as well as in the proof of Theorems~\ref{petalEST} and~\ref{Cor:petals}.

\section{Notation and preliminaries}\label{Notation}

Below we introduce some notation and basic theory used further in the paper. For a more detail and for the proofs of the results presented in this section, we refer the interested readers to the recent monographs \cite{Abate2,BCD-Book}.

\subsection{Notation}
As usual, we denote the unit disc by
${\UD:=\{z\in\C:|z|<1\}}$,
and we write $\UH:={\{w\in\C:\Im w>0\}}$ for the upper half-plane and $\Hr:={\{z\in\C:\Re w>0\}}$ for the right half-plane.

Furthermore, denote by $\Hol(D,E)$ the class of all holomorphic mappings of a domain $D\subset\C$ into a set $E\subset\C$,
and let $\U(D,E)$ stand for the class of all \textit{univalent} (i.e. injective holomorphic) mappings from $D$ to~$E$.  As usual, we endow $\Hol(D,E)$ and $\U(D,E)$ with the topology of locally uniform convergence. In case $E=D$, we will write  $\Hol(D)$ and $\U(D)$ instead of $\Hol(D,D)$ and $\U(D,D)$, respectively.

For a self-map $\varphi:D\to D$ of a domain $D\subset\C$ and ${n\in\Natural}$ we denote by $\varphi^{\circ n}$ the $n$-th iterate of~$\varphi$, and let $\varphi^{\circ0}:=\id_D$. Moreover, if $\varphi$ is an automorphism of~$D$, then for every $n\in\N$, we denote by $\varphi^{\circ-n}$ the $n$-th iterate of~$\varphi^{-1}$.

\subsection{Holomorphic self-maps of the unit disc}
The study of the dynamics of an arbitrary holomorphic self-map $\varphi$ of the unit disc $\mathbb{D}$ is a classical and well-established branch of Complex Analysis.  An important role is played by the fixed points, all of which~--- except for at most one~--- lie on the boundary and hence should be understood in the sense of angular limits: ${\sigma\in\UC}$ is a \dff{boundary fixed point} of $\varphi\in\Hol(\UD)$ if $\varphi(\sigma):={\anglim_{z\to\sigma}\varphi(z)}$ exists and coincides with~$\sigma$. It is known that the angular derivative $\varphi'(\sigma)$ exists at every boundary fixed point~$\sigma$, but it can be infinite. In this latter case, $\sigma$ is referred to as a \dff{super-repelling} fixed point; otherwise, i.e. when $\varphi'(\sigma)$ is finite, it is in fact a positive real number and the boundary fixed point~$\sigma$ is said to be \dff{regular} (\dff{BRFP} for short).

The central result in the area is the Denjoy\,--\,Wolff Theorem, which states that if $\varphi$ is different from an elliptic automorphism (i.e. not an automorphism of~$\UD$ possessing a fixed point in~$\UD$), then the sequence of the iterates $(\varphi ^{\circ n})$ converges locally uniformly in~$\UD$ to a certain point~${\tau\in\overline{\mathbb{D}}}$.  This point  is called the \textit{Denjoy\,--\,Wolff point\/} of $\varphi$. Moreover, if $\tau\in \partial \D$, it is the unique boundary fixed point at which the angular derivative $\varphi'(\tau)$ is finite and belongs to $(0,1]$. In particular, for every BRFP~$\sigma$ different from the Denjoy\,--\,Wolff point~$\tau$, we have ${\varphi'(\sigma)}\in{(1,+\infty)}$. By this reason such points are referred to as \dff{repelling fixed points} of~$\varphi$.

According to the position of the Denjoy\,--\,Wolff point~$\tau$ and to the value of the \textsl{multiplier} $\varphi'(\tau)$, holomorphic self-maps $\varphi\in\Hol(\UD)$ different from elliptic automorphisms are divided into three categories. Namely, $\varphi$ is called:
\begin{itemize}
\item[(a)] \textit{elliptic\/} if $\tau\in\UD$,

\item[(b)] \textit{hyperbolic\/} if $\tau\in \partial \D$ and $\varphi'(\tau )<1$, and

\item[(c)] \textit{parabolic\/} if $\tau
\in \partial \D$ such that $\varphi'(\tau )=1$.
\end{itemize}
The identity mapping~$\id_\UD$ and all elliptic automorphisms of~$\UD$ are conventionally included in the category~(a) of elliptic self-maps.
Similarly, for an elliptic automorphism different from~$\id_\UD$, its Denjoy\,--\,Wolff point is defined to be its unique fixed point in~$\UD$.

Parabolic self-maps can have very different properties depending on the so-called \textit{hyperbolic step}.
Denote by $\rho_\D$ the hyperbolic distance in $\mathbb{D}$, and let $\varphi\in\Hol(\UD)$ be non-elliptic. Thanks to the Schwarz\,--\,Pick Lemma, for the orbit $\big(z_n\big):=\big(\varphi^{\circ n}(z_0)\big)$ of any point ${z_0\in\UD}$, there exists a finite limit $q(z_0):=\lim_{n\to+\infty} \rho_\D(z_{n},z_{n+1})$. It is known, see e.g. \cite[Corollary\,4.6.9]{Abate2}, that  either $q(z_0)>0$ for all~${z_0\in\UD}~$, or $q\equiv0$ in~$\UD$.  The self-map~$\varphi$ is said to be of \textit{positive} or of \textit{ zero hyperbolic step} depending on whether the former or the latter alternative occurs.
If $\varphi$ is
hyperbolic, then it is always of positive hyperbolic step. However, there exist parabolic self-maps of zero as well as of positive hyperbolic step.

\subsection{Holomorphic models for univalent self-maps}\label{Sec:modelos}

An indispensable role in our study is played by the concept of a holomorphic model, which goes back to Pommerenke~\cite{Pom79}, Baker and Pommerenke~\cite{BakerPommerenke}, and Cowen~\cite{Cowen} and which is discussed below for the special case of a univalent self-map. The terminology we use is mainly borrowed from~\cite{Canonicalmodel}.

\begin{definition}\label{DF_holomorphic-model} A \textit{holomorphic model} of $\varphi\in\U(\UD)$ is any triple $\mathcal M:=(S,h,\alpha)$, where $S$ is a Riemann surface, $\alpha$ is an automorphism of $S$, and $h$ is a univalent map from $\UD$ into $S$ satisfying the following two conditions:
	\begin{enumerate}[left=2.5em]
		\item[(HM1)] $h\circ \varphi=\alpha \circ h$, {}~and
		\item[(HM2)] $S\,=\,\bigcup_{n\geq0} \alpha^{\circ \, -n}(h(\D))$.
	\end{enumerate}
The Riemann surface $S$ is called the \textit{base space}, and the map $h$ is called the \textit{intertwining map} of the holomorphic model~$\mathcal M$.
\end{definition}

Every $\varphi\in\U(\UD)\setminus\{\id_\UD\}$ admits an essentially unique holomorphic model. More precisely, the following fundamental theorem holds.

\begin{theorem}[\protect{\cite[Theorem~1.1]{Canonicalmodel}}] \label{Thm:uniqness} Every $\varphi\in\U(\UD)$ admits a holomorphic model. Moreover such a model is unique up to a model isomorphism; i.e., if $(S_1,h_1,\alpha_1)$ and $(S_2,h_2,\alpha_2)$ are holomorphic models for $\varphi$, then there exists a biholomorphic map $\eta$ of~$S_1$ onto~$S_2$ such that
	$$
	h_2=\eta\circ h_1,\quad \alpha_2=\eta\circ\alpha_1\circ \eta^{-1}.
	$$
\end{theorem}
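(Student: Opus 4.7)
The plan is to prove existence by an inductive-limit construction in the category of Riemann surfaces, and uniqueness by chasing the intertwining relation backwards through the iterates of~$\alpha$.

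For existence, I would construct $(S,h,\alpha)$ as follows. Take a copy $X_n:=\D$ for every $n\geq 0$ together with transition maps $\iota_n:=\varphi\colon X_n\to X_{n+1}$, which are open injective holomorphic maps since $\varphi$ is univalent. Let $S:=\varinjlim X_n$ with the inductive-limit topology. Because each $\iota_n$ is injective and open and each $X_n$ is Hausdorff and second countable, $S$ inherits the structure of a (Hausdorff, second countable) Riemann surface, and the canonical map $h\colon X_0=\D\to S$ is univalent. Define $\alpha\colon S\to S$ as the shift that sends the class of $z\in X_n$ to the class of $z\in X_{n-1}$ for $n\geq 1$, and the class of $z\in X_0$ to the class of $\varphi(z)\in X_0$. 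Then $\alpha$ is a biholomorphism (surjective by construction, injective because $\varphi$ is), property (HM1) is built in, and (HM2) just reflects that every point of the direct limit admits a representative in some $X_n$.

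For uniqueness, given two models $(S_i,h_i,\alpha_i)$, $i=1,2$, I would define a candidate biholomorphism $\eta\colon S_1\to S_2$ by pulling back along $\alpha_1$: for each $w\in S_1$, using (HM2) pick $n\geq 0$ with $\alpha_1^{\circ n}(w)=h_1(z)$ for some $z\in\D$, and set $\eta(w):=\alpha_2^{\circ-n}(h_2(z))$. Independence of the choice of $n$ and $z$ follows from iterating (HM1) and using the injectivity of $h_1$: if also $\alpha_1^{\circ m}(w)=h_1(z')$ with $m\geq n$, then (HM1) applied in $S_1$ gives $h_1(z')=h_1(\varphi^{\circ(m-n)}(z))$, hence $z'=\varphi^{\circ(m-n)}(z)$; applying (HM1) in $S_2$ then confirms $\alpha_2^{\circ-m}(h_2(z'))=\alpha_2^{\circ-n}(h_2(z))$. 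Local holomorphicity is clear on each open set $\alpha_1^{\circ-n}(h_1(\D))$, swapping the two models gives $\eta^{-1}$, and the identities $h_2=\eta\circ h_1$ and $\alpha_2=\eta\circ\alpha_1\circ\eta^{-1}$ are immediate from the definition.

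The main obstacle, and the reason univalence of $\varphi$ is essential, is proving that the inductive limit is Hausdorff and carries a well-defined complex structure; without injectivity of $\varphi$, distinct points of some $X_n$ could collapse in the limit and the construction would fail. Once Hausdorffness is established, all the dynamical content is essentially formal. I would also record that, in practice, the abstract $S$ is identified concretely, according to the type of~$\varphi$, with $\C$, $\D$, $\UH$, $\Hr$, or a horizontal strip, via the classical Koenigs\,--\,Valiron\,--\,Pommerenke coordinates built from suitably renormalized limits of the iterates $\varphi^{\circ n}$; this concrete realization is what powers the later arguments in the paper.
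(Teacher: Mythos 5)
Your argument is correct: existence via the inductive limit (where Hausdorffness is indeed automatic here, since the transition maps $\varphi$ are injective and open, so the canonical images of the copies of $\UD$ form an increasing union of open coordinate patches), plus the representative-chasing definition of $\eta$ for uniqueness, is a complete and standard proof. Note that the paper does not prove this statement itself but cites it from Arosio and Bracci \cite{Canonicalmodel}, whose treatment in the univalent case rests on essentially the same direct-limit construction, so your proposal follows the same route as the quoted source.
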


The type of a univalent self-map (elliptic, hyperbolic, or parabolic) is reflected in, and actually can be fully determined from the kind of holomorphic model $\varphi$ admits.  For an open interval $I\subset\Real$, we define
$$
 S_I:=\Real\times I=\{x+iy:x\in\Real,\,y\in I\}.
$$

\begin{theorem}[\cite{Cowen}, see also \cite{Canonicalmodel}]\label{Thm:model} Let $\varphi\in\U(\UD)\setminus\{\id_\UD\}$. The following statements hold.
\begin{ourlist}
	\item\label{IT_HM-ell-auto} $\varphi$ is an elliptic automorphism with multiplier $\lambda\in\partial\UD\setminus\{1\}$ if and only if $\varphi$ admits a holomorphic model of the form ${\mathcal M_\varphi:=(\UD,h,z\mapsto \lambda z)}$, where ${h\in\Aut}$.
	\item\label{IT_HM-ell-non-auto} $\varphi$ is an elliptic self-map with multiplier $\lambda\in\UD^*$ (and hence it is not an automorphism) if and only if $\varphi$ admits a holomorphic model of the form ${\mathcal M_\varphi:=(\C,h,z\mapsto \lambda z)}$.
	\item\label{IT_HM-hyp} $\varphi$ is a hyperbolic self-map with multiplier $\lambda\in(0,1)$ if and only if $\varphi$ admits a holomorphic model of the form $\mathcal M_\varphi:=(S_{I},h,z\mapsto z+1)$, where $I=(a,b)$ is a bounded open interval of length ${b-a=\pi/|\log\lambda|}$.
	\item\label{IT_HM-para-PHS} $\varphi$ is a parabolic self-map of positive hyperbolic step if and only if $\varphi$ admits a holomorphic model of the form ${\mathcal M_\varphi:=(S_{I},h,z\mapsto z+1)}$, where $I$ is an open unbounded interval different from the whole~$\Real$.
	\item\label{IT_HM-para-0HS} $\varphi$ is a parabolic self-map of zero hyperbolic step if and only if $\varphi$ admits a holomorphic model of the form ${\mathcal M_\varphi:=(\C,h,z\mapsto z+1)}$.	
\end{ourlist}	
\end{theorem}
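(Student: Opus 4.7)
The plan is to prove each of the statements \ref{IT_HM-ell-auto}\,--\,\ref{IT_HM-para-0HS} as a biconditional, exploiting the essential uniqueness of the holomorphic model provided by Theorem~\ref{Thm:uniqness}. In each case I would establish the forward direction by explicitly constructing a model of the claimed form, and the reverse direction by reading off the type of~$\varphi$ directly from the structure of the pair~$(S,\alpha)$.

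For the constructions I would invoke the classical linearization theory. In \ref{IT_HM-ell-auto}, one simply conjugates by a disc automorphism sending the interior fixed point to~$0$, so that $\varphi$ becomes $z\mapsto\lambda z$. For \ref{IT_HM-ell-non-auto}, I would apply Koenigs' theorem to obtain a local linearizer $\sigma$ near the interior Denjoy\,--\,Wolff point satisfying $\sigma\circ\varphi=\lambda\sigma$, and then extend univalently to the whole disc by setting $h(z):=\lambda^{-n}\sigma(\varphi^{\circ n}(z))$ for $n$ large enough depending on~$z$; the base space~$\C$ is recovered as $\bigcup_{n\geq0}\lambda^{-n}h(\UD)$, since $h(\UD)$ contains a neighbourhood of~$0$ and $|\lambda|<1$. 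For \ref{IT_HM-hyp} and \ref{IT_HM-para-PHS}, I would apply Valiron's theorem (respectively, Pommerenke's construction) to produce a univalent intertwining map~$h$ whose image lies in a translation-invariant horizontal strip; the choice $\alpha(w)=w+1$ then gives (HM1), and (HM2) is arranged by taking the union of backward translates. Finally, \ref{IT_HM-para-0HS} follows from the Baker\,--\,Pommerenke theorem, which directly yields a univalent $h\colon\UD\to\C$ with $h\circ\varphi=h+1$.

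For the converse implications, Theorem~\ref{Thm:uniqness} guarantees that any univalent self-map has a unique model up to isomorphism, so it suffices to verify that the type of~$\varphi$ is determined by the structure of the model. If $(S,\alpha)=(\UD,z\mapsto\lambda z)$ or $(\C,z\mapsto\lambda z)$, then $h^{-1}(0)\in\UD$ is an interior fixed point of~$\varphi$ with multiplier~$\lambda$, so $\varphi$ is elliptic; the automorphism case is separated from the non-automorphism one by the surjectivity of~$h$, equivalently by whether $S=\UD$ or $S=\C$. If instead $\alpha$ is a translation on a strip~$S_I$ or on~$\C$, orbits of~$\alpha$ escape to the ideal point at~$+\infty$, which via $h^{-1}$ corresponds to a boundary Denjoy\,--\,Wolff point of~$\varphi$, ruling out the elliptic case; the hyperbolic step of~$\varphi$ is then computed via the hyperbolic metric of the base space, separating hyperbolic from parabolic positive step (bounded versus unbounded proper~$I$), while $S=\C$ corresponds exactly to the zero step.

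The main obstacle I anticipate is the precise identification of the interval~$I$, and in particular its length $\pi/|\log\lambda|$, in case~\ref{IT_HM-hyp}. Obtaining an invariant horizontal strip from Valiron's construction is essentially automatic once the intertwining map is at hand, but pinning down its exact width requires matching the conformal modulus of the annular quotient $S_I/\langle w\mapsto w+1\rangle$ against the asymptotic hyperbolic step of~$\varphi$, which in turn is computed from the multiplier~$\lambda$ by a standard calculation in the Poincar\'e metric on~$\UH$. This quantitative step is also what distinguishes~\ref{IT_HM-hyp} from~\ref{IT_HM-para-PHS}, since boundedness of~$I$ corresponds to $\lambda\in(0,1)$, whereas $\lambda=1$ combined with positive hyperbolic step forces~$I$ to be unbounded but proper in~$\R$.
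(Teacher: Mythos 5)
First, note that the paper itself does not prove Theorem~\ref{Thm:model}: it is quoted as a known result from \cite{Cowen} and \cite{Canonicalmodel}, so there is no internal proof to compare against. Your proposal reconstructs the standard argument from those sources~--- forward implications via the classical linearization theorems (Schwarz, Koenigs, Valiron, Pommerenke, Baker--Pommerenke), converses via the essential uniqueness of the model (Theorem~\ref{Thm:uniqness}) together with the fact that the five listed pairs $(S,\alpha)$ are mutually non-isomorphic~--- and in outline this is the correct route.

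Two points in your converse/forward mechanics need repair or elaboration. First, the assertion that ``the hyperbolic step of~$\varphi$ is then computed via the hyperbolic metric of the base space, separating hyperbolic from parabolic positive step (bounded versus unbounded proper~$I$)'' is not correct as stated: in both the bounded-strip and the half-plane models the orbit $h(z)+n$ has constant imaginary part, and in both cases $w\mapsto w+1$ acts as an isometry-type automorphism of the base, so the step is a positive constant in either case; the step only separates $S=\C$ (zero step) from $S=S_I$ (positive step). What actually distinguishes \ref{IT_HM-hyp} from \ref{IT_HM-para-PHS} is the multiplier (equivalently the divergence rate $\lim_n \rho_\UD(z_0,\varphi^{\circ n}(z_0))/n$), or, more structurally, the observation that $w\mapsto w+1$ is a \emph{hyperbolic} automorphism of a bounded strip but a \emph{parabolic} automorphism of a half-plane, so the two model forms cannot be isomorphic and the uniqueness theorem plus your forward constructions then forces the dichotomy; your final paragraph (matching the width of~$I$ with $\pi/|\log\lambda|$) in effect supplies this fix, so the slip is in the stated mechanism rather than in the plan. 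Second, in the forward direction for \ref{IT_HM-hyp} and \ref{IT_HM-para-PHS} you say that (HM2) ``is arranged by taking the union of backward translates''; a priori that union is merely some simply connected domain invariant under $w\mapsto w\pm1$ (it could, for instance, be a comb-type domain), so you still must argue that the pair (domain, translation) is conjugate to $(S_I,\,w\mapsto w+1)$ with the asserted~$I$. This follows from simple connectedness together with the classification of fixed-point-free automorphisms of~$\UD$ (hyperbolic versus parabolic), plus the quantitative identification of the width, but it is a genuine step that your sketch leaves implicit.
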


\begin{remark}\label{RM_normalization}
In the above theorem, we may assume that:
\begin{itemize}
 \item[-]in case~\ref{IT_HM-ell-auto}, $h'(\tau)>0$, where $\tau$ is the Denjoy\,--\,Wolff point of $\varphi$;
 \item[-]in case~\ref{IT_HM-ell-non-auto}, $h'(\tau)=1$, where $\tau$ is the Denjoy\,--\,Wolff point of $\varphi$;
 \item[-]in cases~\ref{IT_HM-hyp} and~\ref{IT_HM-para-0HS}, $h(0)=0$;
 \item[-]in case~\ref{IT_HM-para-PHS}, $\Re h(0)=0$ and ${S_I=S_{(0,+\infty)}=\UH}$ or ${S_I=S_{(-\infty,0)}=-\UH}$.
\end{itemize}
Using the uniqueness part of Theorem~\ref{Thm:uniqness}, one can show (see e.g. \cite[Corollary 4.6.12]{Abate2} for details) that the above assumptions play the role of a normalization under which the holomorphic model $\mathcal M_\varphi$ for a given $\varphi\in\U(\UD)\setminus\{\id_\UD\}$ is unique. Note that the normalization for cases \ref{IT_HM-hyp} and~\ref{IT_HM-para-0HS} would also work in case~\ref{IT_HM-para-PHS}, but we prefer to use another normalization, so that for parabolic self-maps of positive hyperbolic step,  the base space~$S_I$ of~$\mathcal M_\varphi$ coincides with $\UH$ or~$-\UH$. Moreover, replacing, if necessary, $\varphi$ with $z\mapsto\overline{\varphi(\bar z)}$ we may assume that ${S_I=\UH}$.
\end{remark}

\begin{definition}\label{DF_canonical}
 The unique holomorphic model $\mathcal M_\varphi$ of a self-map  $\varphi\in\U(\UD)\setminus\{\id_\UD\}$ defined in Theorem~\ref{Thm:model} and normalized as in Remark~\ref{RM_normalization} is called \dff{canonical (holomorphic) model} for~$\varphi$. The intertwining map~$h$ of the canonical model~$\mathcal M_\varphi$ is called the \dff{Koenigs function}, and ${\Omega:=h(\UD)}$ is called the \textit{Koenigs domain} of~$\varphi$.
\end{definition}

\subsection{Commuting holomorphic self-maps}\label{SS_cummuting}
It is clear that if two  holomorphic self-maps $\varphi,\psi\in\Hol(\UD)\setminus\{\id_\UD\}$ commute, i.e. ${\varphi\circ\psi}={\psi\circ\varphi}$, and if one of them is elliptic, then the other is also elliptic and they share the Denjoy\,--\,Wolff point. The situation is not so evident when we consider non-elliptic self-maps. In 1973, Behan \cite{Behan}, see also \cite[Section 4.10]{Abate2}, proved that if $\varphi, \psi$ are non-elliptic self-maps of~$\UD$ with Denjoy\,--\,Wolff points $\tau_{\varphi}$ and $\tau_{\psi}$, respectively, then:
\begin{enumerate}
\item[(i)] if $\varphi$ is not a hyperbolic automorphism, then $\varphi$ and~$\psi$ share the  Denjoy\,--\,Wolff point, i.e.~$\tau_{\varphi}=\tau_{\psi}$;
\item[(ii)]  if $\varphi$ is a hyperbolic automorphism, then $\psi$ is a hyperbolic automorphism as well and it has the same fixed points as~$\varphi$.
\end{enumerate}

Later, Cowen proved that if $\varphi$ and $\psi$ are two non-elliptic commuting holomorphic self-maps of~$\D$ and if $\varphi$  is hyperbolic, then $\psi$ is also hyperbolic (and thus if $\varphi$ is parabolic, then $\psi$ is parabolic) \cite[Corollary~4.1]{Cowen-comm}, see also \cite[Theorem~1.3]{Simultaneous}.

In contrast to the Denjoy\,--\,Wolff point, a repelling fixed point~$\sigma$ of a holomorphic self-map~$\psi$ commuting with~$\varphi$ does not have to be a boundary fixed point of~$\varphi$: it is only guaranteed that $\sigma$ is a \dff{contact point} of~$\varphi$, i.e. the angular limit $\varphi(\sigma):={\anglim_{z\to\sigma}\varphi(z)}$ exists and belongs to~$\UC$, and that $\varphi(\sigma)$ is also a BRFP of~$\psi$; see e.g. \cite{Filippo-puntosTAMS} or \cite[Sect.\,4.10]{Abate2} and references therein. Clearly, the study of contact points reduces to that of boundary fixed points by composing with suitable rotations, and as a result one can classify contact points of a holomorphic self-map into  \dff{regular} and \dff{irregular} depending on whether the angular derivative at a given contact point is finite or not.

Restricted to \textbf{univalent} self-mappings of~$\UD$, in \cite{CDG-Centralizer} we studied the so-called \textsl{centralizers} of non-elliptic self-maps $\varphi\in\U(\UD)$ defined by
$$
  \Zen(\varphi):=\big\{\psi\in\U(\UD):\varphi\circ\psi=\psi\circ\varphi\big\}.
$$
Let ${(S,h,z\mapsto z+1)}$ be the canonical holomorphic model for the self-map~$\varphi$. Then, by \cite[Theorem~3.3]{CDG-Centralizer}, a self-map $\psi\in\U(\UD)$ commutes with~$\varphi$ if and only if $g_\psi:={h \circ \psi \circ h^{-1}\,:\,\Omega\to\Omega}:=h(\UD)$ extends holomorphically to a univalent self-map of~$S$ commuting with ${w\mapsto w+1}$.

Moreover,  if $\varphi$ is hyperbolic or parabolic of zero hyperbolic step, then the function $g_\psi$ corresponding to any ${\psi\in\Zen(\varphi)}$ is necessarily an affine map of the form ${w\mapsto w+c}$, where  $c$ is a (complex) constant; see \cite[Theorems~5.2 and~6.4]{CDG-Centralizer}. However, for the case of a parabolic self-map~$\varphi$ of positive hyperbolic step, the situation is quite different: there can be many $\psi$'s in the centralizer $\Zen(\varphi)$, for which $g_\psi$ is not an affine map.

\begin{definition}\label{Def:affine}
In the above setting, we say that $\psi\in\Zen(\varphi)$ is \textsl{affine w.r.t.~$\varphi$} if $g_\psi$ is an affine map.
\end{definition}
\begin{remark}\label{RM_affine-leadCoeff=1}
It is easy to see that if $\psi\in\Zen(\varphi)$ is affine w.r.t.~$\varphi$, then $g_\psi(w)=w+c$. Moreover, see \cite[Remark~4.3]{CDG-Centralizer}, the constant $c=c_{\varphi,\psi}$ is given by
\begin{equation}\label{EQ_formula-for-c}
c_{\varphi,\psi}=\begin{cases}
 \displaystyle \angle\lim_{z\to\tau}\big(\psi(z)-z\big)/\big(\varphi(z)-z\big),& \text{if $\varphi$ is parabolic,}\\[1.5ex]
 \hphantom{\angle}\frac{\log{\psi^\prime(\tau)}}{\log{\varphi^\prime(\tau)}}, & \text{if $\varphi$ is hyperbolic,}
 \end{cases}
\end{equation}
where $\tau$ stands for the Denjoy\,--\,Wolff point  of~$\varphi$.
\end{remark}

\subsection{One-parameter semigroups in the unit disc}\label{SS_one-param-semigr}
The study of one-parameter semigroups of holomorphic self-maps goes back to the early 1900s. However, one of the first pivot works, which brought this topic to the forefront, was published much later, namely in~1978, by Berkson and Porta~\cite{BP}, who studied continuous one-parameter semigroups of holomorphic self-maps of the unit disc~$\UD$ in connection with composition operators.  The current state of the art in this rich topic is presented in the monograph~\cite{BCD-Book}; see also~\cite{EliShobook10}.
We recall the definition straight away.
\begin{definition} \label{def:semigroup}
We say that a family $(\psi_t)_{t\geq0}$ (or to simplify,  $(\psi_t)$)
of holomorphic functions ${\psi_t:\D\to \D}$ is a \dff{one-parameter semigroup} if it verifies the following two algebraic properties:
\begin{itemize}
    \item[(i)] $\psi_0=\id_{\D}$;
    \item[(ii)] $\psi_t\circ\psi_s=\psi_{t+s}~$ for every $t,s\geq0$.
\end{itemize}
If, in addition, $\psi_t\to\psi_0$ uniformly on compact subsets of $\D$, as $t\to0^+$, we say that the one-parameter semigroup $(\psi_t)$ is \dff{continuous}.
\end{definition}

\smallskip\noindent{\bf Convention.}
From now on, unless explicitly indicated otherwise, by a continuous one-parameter semigroup we will mean a \textit{non-trivial} one, i.e. containing at least one element different from the identity map.
\smallskip

It is worth recalling that all elements of any continuous one-parameter  semigroup are univalent functions (see, e.g., \cite[Theorem~8.1.17]{BCD-Book}). Moreover, all of them, except for the identity map, have the same Denjoy\,--\,Wolff point, so one can talk about elliptic and non-elliptic continuous one-parameter semigroups (see, e.g., \cite[Theorem~8.3.1]{BCD-Book}). A similar remark concerns the repelling and super-repelling fixed points (see~\cite{CDP2004} and~\cite{Analytic-flows}).

Given a continuous one-parameter semigroup $(\psi_{t})$, it is possible to show that all the functions of the semigroup \textit{essentially} share their canonical model.
Indeed,  for non-elliptic semigroups, if $(S,h,z\mapsto z+1)$ is the canonical holomorphic model for $\psi_{1}$ given in Theorem~\ref{Thm:model}, then the triple $\big(S,h,(z\mapsto z+t)_{t\ge0}\big)$ is the canonical holomorphic model for~$(\psi_t)$ (see~\cite[Theorem~9.3.5]{BCD-Book} for the precise definition and further details). In particular, we have that
\begin{equation}\label{EQ_Abel-eq-for-semigroup}
h\circ \psi_{t}=h+t\qquad\text{for all $~t\geq 0$.}
\end{equation}
It follows  that $\psi_t={h^{-1}\circ(h+t)}$ for all ${t\ge0}$, we can differentiate this equality w.r.t.~$t$ and get ${\di\hskip-.075em\psi_t/\di t=G\circ\psi_t}$, where ${G:=1/h'}$ is called the \dff{infinitesimal generator} of~$(\psi_t)$.
(In a similar way, one can introduce the infinitesimal generator for an elliptic continuous one-parameter semigroup; see, e.g., \cite[Theorem~10.1.4\,(2)]{BCD-Book}. A detailed description of the properties of infinitesimal generators can be found in \cite[Chapter~10]{BCD-Book}.)

Moreover, in combination with Theorem~\ref{Thm:model}, the fact concerning holomorphic models of $\psi_t$'s mentioned above implies that the non-elliptic semigroup $(\psi_t)$, with the exclusion of ${\psi_0=\id_\UD}$, is entirely contained in one of the categories: hyperbolic holomorphic self-maps, parabolic holomorphic self-maps of zero hyperbolic step, or those of positive hyperbolic step\,--- and hence the whole semigroup can be classified accordingly.

\begin{definition}\textcolor{black}{
The function $h$ and the domain $\Omega:=h(\UD)$ defined above are called the \dff{Koenigs function} and \dff{Koenigs domain} of the non-elliptic semigroup~$(\psi_t)$.}
\end{definition}

\begin{remark}
 It is worth mentioning that if a continuous one-parameter semigroup~$(\psi_t)$ contains an automorphism of~$\UD$ different from~$\id_\UD$, then $(\psi_t)\subset\Aut$ and in such case we say that $(\psi_t)$ is a  \dff{group}, as it can be extended to a one-parameter group by setting ${\psi_t:=(\psi_{-t})^{-1}}$ for all~${t<0}$; see e.g. \cite[\S8.2]{BCD-Book} for more details.
\end{remark}

\section{The starting example and a characterization of self-maps commuting with a semigroup}\label{theexample}

As it was mentioned in the introduction, there are functions $\varphi\in \U(\D)$ and continuous one-parameter semigroups $(\psi_{t})$ in the unit disc such that $\varphi\circ\psi_{1}=\psi_{1}\circ \varphi$ and, at the same time, ${\varphi\circ\psi_{t}}\neq{\psi_{t}\circ \varphi}$ for some~${t>0}$. An example of this can be found in \cite[Example 8.5]{CDG-Centralizer}. Below we present a similar but more direct example.

\begin{example}\label{Mainexample}
Consider the complex domain
$$\Omega:=\C\setminus\Big(\bigcup_{p\in \Z}\Gamma_{p}\Big),\quad \Gamma_{p}:=\{x+ip:\, x\leq 0\}.$$
Denote by $h$ a Riemann map from~$\D$ onto~$\Omega$ and consider the continuous one-parameter semigroup $(\varphi_{t})$ formed by the functions $\varphi_t:={h^{-1}\circ(z\mapsto z+t)\circ h}$.

Note that $\Psi(w):=w+i$ is an automorphism of $\Omega$. Hence there exists a continuous one-parameter semigroup $(\Psi_{t})\subset\mathsf{Aut}(\Omega)$ such that ${\Psi_{1}=\Psi}$.
Now consider the continuous one-parameter semigroup~$(\psi_t)$ in~$\UD$ defined by $\psi_t:={h^{-1}\circ \Psi_{t}\circ h}$ for all ${t\ge0}$.

Suppose that ${\psi_t\circ\varphi_s}={\varphi_s\circ\psi_t}$ for some ${s,t>0}$. Then for any ${k\in\mathbb N}$ and all ${z\in\Omega}$, we have
\begin{equation}
 \Psi_t(z)=\Psi_t(z+ks)-ks.
\end{equation}
By this relation, $\Psi_t$ can be extended to a univalent function in~${\Omega-ks}$. Since ${k\in\Natural}$ is arbitrary, this in fact means that $\Psi_t$ extends as a univalent function to ${\bigcup_{k\in\Natural}(\Omega-ks)=\C}$. Therefore, $\Psi_t(w)=aw+b$ for all ${w\in\Omega}$ and some constants ${a\in\C\setminus\{0\}}$ and ${b\in\C}$. Taking into account that $\Psi_t$ is an automorphism of~$\Omega$, it further follows that ${a=1}$ and ${b=in}$ for some ${n\in\mathbb{Z}}$. Hence $\Psi_t=\Psi^{\circ n}$. Since ${t>0}$ and since the semigroup $(\Psi_t)$ is non-elliptic, the latter is possible only if ${t=n\in\Natural}$.

The above argument shows that although $\psi_1$ commutes with the whole semigroup~$(\varphi_t)$, the functions $\psi_t$ and~$\varphi_s$ do not commute whenever ${s>0}$ and ${t>0}$ is not integer. In particular, this shows that the answer to the question in Problem~\ref{theproblem} is negative: in general, the fact $\psi_1$ that commutes with $\varphi(:=\varphi_1)$ does not allow to conclude~--- if no additional condition is imposed~--- that the whole semigroup $(\psi_t)$ commutes with~$\varphi$.
\end{example}

\begin{remark}\label{RM_after-the-example}
Note that in the above example, the self-map $\varphi:=\varphi_1$ has infinitely many BRFPs $\sigma_k:=\lim_{x\to-\infty}h^{-1}\big(x+i(k+\tfrac12)\big)$, ${k\in\mathbb Z}$ \,(see \cite{Analytic-flows}). Although $\varphi$ commutes with $\psi_1$, none of these points is fixed by~$\psi_1$. In fact, $\psi_1(\sigma_k)=\sigma_{k+1}$ for all ${k\in\mathbb Z}$.
\end{remark}

The example presented above gives a hint on how to characterize holomorphic self-maps of~$\UD$ that commute with all elements of a given (non-elliptic) continuous one-parameter semigroup.

\begin{proposition}\label{PR_simple}
 Let $(\psi_t)$ be a non-elliptic continuous one-parameter semigroup with Koenigs function~$H$. For a holomorphic self-map $\varphi\in\Hol(\UD)$ the following two condition are equivalent:
 \begin{equilist}
 \item\label{IT_simple-commute} ${\varphi\circ\psi_t}={\psi_t\circ\varphi}$ for all $t>0$;
 \item\label{IT_simple-affine} there exists a constant ${c\in\Complex}$ such that
 $
   H\circ\varphi=H+c.
 $
 \end{equilist}
In particular, any $\varphi\in\Hol(\UD)$ satisfying~\ref{IT_simple-commute} is univalent.
\end{proposition}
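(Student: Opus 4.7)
The plan is to transfer both conditions into the Koenigs coordinate, where the semigroup $(\psi_t)$ becomes the flow of horizontal translations $w\mapsto w+t$ on the Koenigs domain $\Omega:=H(\UD)$. Throughout, I would use the Abel equation $H\circ\psi_t=H+t$ recalled in~\eqref{EQ_Abel-eq-for-semigroup} and the fact that $H$ is univalent.

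For the implication \ref{IT_simple-affine}~$\Rightarrow$~\ref{IT_simple-commute}, I would just compute. From $H\circ\varphi=H+c$ and $H\circ\psi_t=H+t$ it follows that
\[
 H\circ(\varphi\circ\psi_t)=(H+c)\circ\psi_t=H\circ\psi_t+c=H+t+c,
\]
and symmetrically $H\circ(\psi_t\circ\varphi)=H\circ\varphi+t=H+c+t$. Univalence of $H$ then forces $\varphi\circ\psi_t=\psi_t\circ\varphi$ for every $t\geq0$.

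For the converse \ref{IT_simple-commute}~$\Rightarrow$~\ref{IT_simple-affine}, I would introduce
\[
 G:=H\circ\varphi\circ H^{-1}:\Omega\to\C,
\]
where $H^{-1}$ is the univalent branch $\Omega\to\UD$; this makes $G$ holomorphic on $\Omega$. Since $\psi_t(\UD)\subset\UD$ and $H\circ\psi_t=H+t$, the Koenigs domain is forward-invariant under horizontal translations, i.e. $\Omega+t\subset\Omega$ for every $t\geq0$. Combining this invariance with the commutativity hypothesis and the Abel equation, I would derive
\[
 G(w+t)=H\big(\varphi(\psi_t(H^{-1}(w)))\big)=H\big(\psi_t(\varphi(H^{-1}(w)))\big)=G(w)+t
\]
for all $w\in\Omega$ and $t\geq0$. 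Thus the holomorphic function $\widetilde G(w):=G(w)-w$ is constant along every forward horizontal ray $\{w_0+t:t\geq0\}\subset\Omega$, so $\widetilde G'$ vanishes on such rays. Since $\Omega$ is a connected open subset of $\C$, the identity principle gives $\widetilde G\equiv c$ for some $c\in\C$, that is, $H\circ\varphi=H+c$.

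The final claim that such a $\varphi$ is automatically univalent is then immediate: if $\varphi(z_1)=\varphi(z_2)$, applying $H$ yields $H(z_1)+c=H(z_2)+c$, whence $z_1=z_2$ by univalence of~$H$. I do not expect any genuine obstacle: the only points requiring a moment of care are the forward invariance $\Omega+t\subset\Omega$ (which reduces to $\psi_t(\UD)\subset\UD$ via the Abel equation) and the fact that $H^{-1}$ is available on all of $\Omega$, both of which are standard features of the canonical model of a non-elliptic continuous one-parameter semigroup as recalled in Section~\ref{SS_one-param-semigr}.
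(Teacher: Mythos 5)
Your proposal is correct and follows essentially the same route as the paper: conjugate by the Koenigs function, use the Abel equation $H\circ\psi_t=H+t$ to show that $g:=H\circ\varphi\circ H^{-1}$ satisfies $g(w+t)=g(w)+t$ on forward horizontal rays, and conclude by the identity principle that $g(w)=w+c$, with univalence of $\varphi$ then immediate from univalence of~$H$. The only difference is cosmetic: you spell out the easy implication \ref{IT_simple-affine}~$\Rightarrow$~\ref{IT_simple-commute} and the ray-to-domain step slightly more explicitly than the paper does.
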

\begin{proof}
Clearly, \ref{IT_simple-affine} implies~\ref{IT_simple-commute}. To prove the converse implication,
consider the holomorphic map ${g:\Omega\to\Omega:=H(\UD)}$ defined by $g:={H\circ\varphi\circ H^{-1}}$.
Condition~\ref{IT_simple-commute} is equivalent to
$$
 g(w+t)=g(w)+t\quad\text{for all~$~w\in\Omega~$ and all~$~t\ge0$.}
$$
Fix some $w_0\in\Omega$. The above identity means that for all $w\in\{w_0+t:t\ge0\}$, we have ${g(w)=w+c}$, where ${c:=g(w_0)-w_0}$. By the identity principle for holomorphic functions, ${g(w)=w+c}$ for all ${w\in\Omega}$, which is equivalent to~\ref{IT_simple-affine}.

Finally, if  \ref{IT_simple-affine} holds, then $\varphi=H^{-1}\circ(H+c)$ and it immediately follows that $\varphi$ is univalent.
\end{proof}

\begin{remark}
Differentiating both sides of the functional equation $H\circ\varphi=H+c$, one can write a further condition equivalent to conditions~\ref{IT_simple-commute} and~\ref{IT_simple-affine} in the above proposition:
 \begin{equilist}\addtocounter{enumi}{2}
 \item $G\circ\varphi=\varphi'G$, where $G$ stands for the infinitesimal generator of~$(\psi_t)$.
 \end{equilist}
\end{remark}
Note that in our main results established in this paper, it is not supposed that the \textit{whole} semigroup~${(\psi_t)}$ is known. In most of the cases, we only assume that $\psi_1$ is given. Therefore, in our situation, the above characterizations cannot be applied directly.  Nevertheless, Proposition~\ref{PR_simple} turns out to be very helpful in some of our arguments.

\section{Proof of Theorem \ref{TH_affine}}\label{S_affine}
We start by recalling a key result on holomorphic models for parabolic self-maps of positive hyperbolic step that we need and use strongly in the proof (restricting to the special case of a univalent self-map).
\begin{theorem}[\cite{Pom79}; see also {\cite[Lemma 2.2]{Poggi}}]
	\label{modeloHr} Let $f\in\U(\Hr)$ be parabolic of positive hyperbolic step with Denjoy\,--\,Wolff point $\infty$, and let $w_0\in\Hr$. Then the sequence ${(h_n)\subset\U(\Hr)}$ defined as
	$$
	h_n(w):=\frac{f^{\circ n}(w)-i\Im f^{\circ n}(w_0)}{\Re f^{\circ n}(w_0)}, \quad n\in \N,\ w\in\Hr,
	$$
	converges uniformly on compact subsets of $\Hr$ to a certain $h\in\U(\Hr)$ such that $h(w_0)=1$. Moreover:
\begin{romlist}
  \item\label{IT_Pommerenke1}  there exists $b=b(w_0)\in\R^*:=\R\setminus\{0\}$ such that $h\circ f=h+ib$;
  \item\label{IT_Pommerenke2} $(\Hr,h, w\mapsto w+ib)$ is a holomorphic model for~$f$.
\end{romlist}
\end{theorem}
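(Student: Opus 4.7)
The plan is to follow Pommerenke's original strategy: show the normalized iterates form a normal family, identify the functional equation satisfied by every subsequential limit, deduce uniqueness of the limit, and finally verify the model condition (HM2).

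\emph{Setup and normal family.} Write $w_n := f^{\circ n}(w_0) = a_n + i b_n$ with $a_n := \Re w_n$, $b_n := \Im w_n$, and set $T_n(w) := a_n w + i b_n$, which is an automorphism of $\Hr$ sending $1$ to $w_n$. Then by construction $h_n = T_n^{-1}\circ f^{\circ n}$ is a univalent self-map of $\Hr$ with $h_n(w_0) = 1$. Since self-maps of $\Hr$ form a normal family (Cayley-transport to $\UD$ and Montel), and the normalization $h_n(w_0)=1$ rules out constant limits equal to a boundary point or to $\infty$, along any subsequence one can extract a limit $h\in\Hol(\Hr,\overline{\Hr})$ with $h(w_0)=1$; by Hurwitz, $h\in\U(\Hr,\Hr)$.

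\emph{Asymptotic relations between $h_n$ and $h_{n+1}$.} From $f^{\circ(n+1)} = f\circ f^{\circ n}$ one computes
$$
h_n\circ f \,=\, T_n^{-1}\circ T_{n+1}\circ h_{n+1} \,=\, \alpha_n\, h_{n+1} + i\beta_n, \qquad \alpha_n := \frac{a_{n+1}}{a_n},\ \ \beta_n := \frac{b_{n+1}-b_n}{a_n}.
$$
The technical heart of the proof is to show, using only that $f$ is parabolic of \emph{positive} hyperbolic step with Denjoy--Wolff point $\infty$, that $a_n\to\infty$, $\alpha_n \to 1$, and $\beta_n \to b$ for some real number $b\neq 0$. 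Positive hyperbolic step is equivalent to $\rho_{\Hr}(w_n,w_{n+1})\to c>0$; combined with the Denjoy--Wolff convergence $w_n\to\infty$ non-tangentially and the explicit formula for $\rho_{\Hr}$, one obtains the required asymptotics. Crucially, the zero-hyperbolic-step case is precisely $\beta_n\to 0$, so here $b\neq 0$ is forced. This is the step I expect to be the main obstacle, since it is the only place where the dynamical hypothesis is used quantitatively.

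\emph{Functional equation and uniqueness of the limit.} Passing to the limit along any convergent subsequence $h_{n_k}\to h$ in the displayed identity yields $h\circ f = h + ib$, proving assertion \ref{IT_Pommerenke1} for every subsequential limit. To upgrade subsequential convergence to full convergence, observe that the translation number $b$ and the normalization $h(w_0)=1$ are independent of the subsequence. If $h$ and $\tilde h$ are two subsequential limits, then $\eta := \tilde h\circ h^{-1}$ is a univalent map from $h(\Hr)$ into $\Hr$ that commutes with the translation $z\mapsto z+ib$ and fixes the point $1$. Iterating the commutation relation, $\eta(z + inb) = \eta(z) + inb$ for all $n\in\N$, which pins down $\eta$ on a dense subset of $h(\Hr)$ by a normal-family/Koebe-distortion argument, forcing $\eta = \id$ and hence $h = \tilde h$.

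\emph{Model property.} Finally, condition (HM2) for $(\Hr, h, w\mapsto w+ib)$ amounts to $\Hr = \bigcup_{n\ge 0}(h(\Hr) - inb)$. From $h\circ f = h + ib$ and $f(\Hr)\subset\Hr$, the domain $\Omega := h(\Hr)$ satisfies $\Omega + ib\subset \Omega$, so the union is monotone increasing. To show it exhausts $\Hr$, one uses that $\Omega$ contains $h_n(K)$ for every compact $K\subset\Hr$ and all large $n$; applying $T_n$, the set $T_n(h(\Hr))$ eventually contains any preassigned point of $\Hr$, and then the rescaling together with $\alpha_n\to 1$ and $|\beta_n|\to|b|>0$ translates this into the statement that suitable shifts $\Omega - inb$ eventually cover any prescribed compact set. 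This establishes \ref{IT_Pommerenke2} and completes the proof.
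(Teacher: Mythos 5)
First, a point of comparison: the paper does not prove Theorem~\ref{modeloHr} at all --- it is imported from Pommerenke \cite{Pom79} (see also \cite[Lemma~2.2]{Poggi}) --- so your proposal has to stand on its own. Its architecture (normalized iterates, normality, the recursion $h_n\circ f=\alpha_n h_{n+1}+i\beta_n$, limiting functional equation, then (HM2)) is indeed the architecture of Pommerenke's proof. But the step you yourself flag as ``the main obstacle'' is the entire theorem, and the route you indicate for it is broken. For a parabolic self-map of \emph{positive} hyperbolic step the orbit converges to the Denjoy--Wolff point \emph{tangentially}, not non-tangentially; already $f(w)=w+i$ shows this, and it also shows that $a_n=\Re f^{\circ n}(w_0)$ need not tend to $\infty$ (by Julia's lemma $a_n$ is merely nondecreasing, possibly bounded). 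So ``non-tangential Denjoy--Wolff convergence plus the formula for $\rho_{\Hr}$'' cannot yield $\alpha_n\to1$ and $\beta_n\to b\neq0$. Those asymptotics are true, but Pommerenke obtains them --- together with the convergence of the \emph{full} sequence $(h_n)$ and its non-degeneracy, which is the real content --- from a harder analysis of the positive harmonic functions $\Re f^{\circ n}$ via Julia's inequality and Harnack-type monotonicity. Two further gaps in the same vein: Hurwitz only gives ``univalent or constant,'' and your normalization does not exclude the constant limit $1$ (that exclusion needs the functional equation with $b\neq0$); and passing to the limit in $h_n\circ f=\alpha_n h_{n+1}+i\beta_n$ along a subsequence requires knowing that $h_{n_k}$ and $h_{n_k+1}$ converge to the \emph{same} function, which is not automatic for subsequential limits.

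Second, the uniqueness argument is wrong in principle. A univalent map $\eta$ of $h(\Hr)$ into $\Hr$ that commutes with $w\mapsto w+ib$ and fixes the point $1$ is very far from being forced to be the identity: the centralizer of a translation on a half-plane contains all maps $w\mapsto w+G(w)$ with $G$ periodic of period $ib$ (this is exactly the phenomenon exploited in Theorem~\ref{TH_affine} and \cite[Proposition~7.2]{CDG-Centralizer}), and fixing one point imposes a single scalar condition on this infinite-dimensional family. The relation $\eta(z+inb)=\eta(z)+inb$ determines $\eta$ only on discrete vertical orbits, which are not dense, so no Koebe-distortion argument closes the gap; the actual proof establishes convergence of the full sequence directly rather than identifying subsequential limits a posteriori. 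The verification of (HM2) is likewise only gestured at. In short, the skeleton is correct, but every load-bearing step (full convergence, non-degeneracy, $b\neq0$, uniqueness, exhaustion) is either missing or rests on claims that are false as stated.
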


\begin{proof}[\proofof{Theorem \ref{TH_affine}}]
Let $\tau\in\partial\D$ be the Denjoy\,--\,Wolff point of $\varphi$ and consider the standard Cayley map $C(z):=\frac{\tau+z}{\tau-z},\ z\in\D$. Moreover, define
$$
	f:=C\circ \varphi \circ C^{-1}\in\U(\Hr),\quad \Psi:=C\circ \psi\circ C^{-1}\in\U(\Hr).
$$
By the hypothesis, $\psi\neq\id_\D$ and it commutes with the parabolic self-map~$\varphi$. Therefore, by Behan's Theorem~\cite{Behan} and by a result of Cowen~\cite[Corollary~4.1]{Cowen-comm}, $\psi$ is  parabolic and has the same  Denjoy\,--\,Wolff point $\tau$ as~$\varphi$. As a consequence, both $f$ and $\Psi$ are parabolic self-maps of $\Hr$ with Denjoy\,--\,Wolff point at~$\infty$.

\StepP{\ref{IT_affine:limit-existence}}
    Fix $z\in\D$. A direct computation shows that
	$$
	 \frac{\psi(z)-z}{\varphi(z)-z}=\frac{1+ f(C(z))}{1+\Psi(C(z))}\,\frac{\Psi(C(z))-C(z)}{f(C(z))-C(z)}.
	$$
	Therefore, denoting $w:=C(z)\in\Hr$ and, for all $n\in\N$,
	\begin{equation}\label{EQ_fla-for-R_phi-psi}
      R_{\varphi,\psi}(\varphi^{\circ n}(z))=
      \frac{\psi(\varphi^{\circ n}(z))-\varphi^{\circ n}(z)}{\varphi(\varphi^{\circ n}(z))-\varphi^{\circ n}(z)}=
      \frac{1+ f(f^{\circ n}(w))}{1+\Psi(f^{\circ n}(w))}\,%
          \frac{\Psi(f^{\circ n}(w))-f^{\circ n}(w)}{f(f^{\circ n}(w))-f^{\circ n}(w)}.
	\end{equation}
    Now, denote $x_n:=\Re(f^{\circ n}(1))$ and $y_n:=\Im(f^{\circ n}(1)),\ n\in\N$. Since ${f\in\U(\Hr)}$ is parabolic of positive hyperbolic step, by Theorem~\refeq{modeloHr}\,\ref{IT_Pommerenke1}, the sequence $h_n:={(f^{\circ n}-iy_n)/x_n}$ converges locally uniformly in~$\Hr$ to a univalent function $h$ such that ${h\circ f=h+ib},$ where ${b\in\R^*}$ is some constant.
  Since $f$ and $\Psi$ commute, we have that as~$n\to+\infty$,
	\begin{equation}\label{EQ_lim1}
     \begin{array}{r@{}l}
      \displaystyle
      \frac{\Psi(f^{\circ n}(w))-f^{\circ n}(w)}{f(f^{\circ n}(w))-f^{\circ n}(w)}&{}=
             \displaystyle
             \frac{f^{\circ n}(\Psi(w))-f^{\circ n}(w)}{f^{\circ n}(f(w))-f^{\circ n}(w)}\\&{}=
             \displaystyle\vphantom{\int\limits_0^1}
             \frac{h_n(\Psi(w))-h_n(w)}{h_n(f(w))-h_n(w)}~\longrightarrow~ \frac{1}{ib}\big(h(\Psi(w))-h(w)\big)
    \end{array}
    \end{equation}
    locally uniformly in~$\Hr$.
By \cite[Theorem~5.1]{BetCS}, taking into account that \cite[eqn.\,(5.1)]{BetCS} is equivalent to \cite[eqn.\,(5.2)]{BetCS}, we have
    \begin{equation}\label{EQ_lim2}
      \lim_{n\to\infty}\frac{1+f^{\circ n}(f(w))}{1+f^{\circ n}(\Psi(w))}=1\qquad\text{for every~$~w\in\Hr$.}
    \end{equation}
    Since the range of $q_n:=(1+f^{\circ n}\circ f)/(1+f^{\circ n}\circ\Psi)$ is contained in ${\C\setminus (-\infty,0]}$, the sequence $(q_n)$ is normal in~$\Hr$ and hence, the above convergence is locally uniform in~$\Hr$.

Now, combining~\eqref{EQ_fla-for-R_phi-psi}, \eqref{EQ_lim1}, and~\eqref{EQ_lim2}, we see that the sequence $\big(R_{\varphi,\psi}\circ \varphi^{\circ n}\big)$ converges locally uniformly in~$\UD$ to the function
	\begin{equation}\label{EQ_lim-f}
      f_{\varphi,\psi}(z):=\frac{1}{ib}\big(h(\Psi(C(z)))-h(C(z))\big)%
                                      =\frac{1}{ib}\big(h(C(\psi(z)))-h(C(z))\big),\quad z\in\UD.
	\end{equation}

\StepP{\ref{IT_affine:affine-iff-const}} According to Theorem~\ref{modeloHr}\,\ref{IT_Pommerenke2}, ${\big(S_b,\frac{1}{ib}(h\circ C),z\mapsto z+1\big)}$, where $S_b:=\H$ if ${b<0}$ and $S_b:=-\H$ if ${b>0}$,  is a holomorphic model for $\varphi$. Taking into account the uniqueness of the canonical model, see Remark~\ref{RM_normalization}, this means that, up to an additive constant, $\frac{1}{ib}h\circ C$ coincides with the Koenigs function of~$\varphi$. In combination with~\eqref{EQ_lim-f}, this means that $\psi$ is affine with respect to $\varphi$ if and only if ${\frac{1}{ib}\big(h\circ C\circ\psi - h\circ C\big)}=f_{\varphi,\psi}$ is constant. It remains to notice that if this condition holds, then the value of the constant is given by formula~\eqref{EQ_formula-for-c}, i.e.
	$$
     f_{\varphi,\psi}\equiv c_{\varphi\,\psi}= \anglim_{z\to\tau}\big(\psi(z)-z\big)/\big(\varphi(z)-z\big).
    $$

\StepP{\ref{IT_affine:extension} and~\ref{IT_affine:extension-bis}}
We may suppose that the canonical holomorphic model~$\mathcal M_\varphi$ of~$\varphi$ is of the form ${(\H,h_\varphi,z\mapsto z+1)}$. The proof in the case $\mathcal M_\varphi={(-\H,h_\varphi,z\mapsto z+1)}$ is similar. According to \cite[Proposition~7.2]{CDG-Centralizer}, there exists $F\in\Hol(\D,\H\cup\R)$ such that:
	\begin{enumerate}
		\item[(i)] the function $\UH\ni w\mapsto g(w):=w+F(e^{2\pi iw})$ belongs to $\U(\H)$
                   and $g\big( h_\varphi(\D)\big)\subset h_\varphi(\D).$
		\item[(ii)] $\psi=h_\varphi^{-1}\circ g\circ h_\varphi$.
	\end{enumerate}
	As we have seen in the proof of~\ref{IT_affine:affine-iff-const}, $h_\varphi$ coincides up to an additive constant with $\frac{1}{ib}(h\circ C)$. Therefore, using~\eqref{EQ_lim-f}, for every ${w\in h_\varphi(\D)}$ we get
	\begin{eqnarray*}
      f_{\varphi,\psi}\circ h_\varphi^{-1}(w)&=&\frac{1}{ib}\big(h\circ C\circ \psi\circ h_\varphi^{-1}(w)-h\circ C\circ h_\varphi^{-1}(w)\big)\\
		&=&h_\varphi\circ \psi\circ h_\varphi^{-1}(w)-h_\varphi \circ h_\varphi^{-1}(w)=g(w)-w.
	\end{eqnarray*}
Thus, $f_{\varphi,\psi}\circ h_\varphi^{-1}$ is a restriction of the map ${\UH\ni w\mapsto G(w):=F(e^{2\pi i w})\in \H\cup\R}$. This immediately implies~\ref{IT_affine:extension} and shows that \ref{IT_affine:extension-bis} follows from~(i) and~(ii).
\end{proof}

\section{Petals and commutativity I. Auxiliary results}\label{petals}

\begin{definition}
Let $(\psi_t)$ be a continuous one-parameter semigroup of the unit disc. A connected component of  $\mathcal W^\circ$, the interior of the backward invariant set $\mathcal W:={\bigcap_{t\geq0}\psi_t(\D)}={\bigcap_{n\in\Natural}\psi_1^{\circ n}(\D)}$, is called a {\sl petal}  of $(\psi_t)$.
\end{definition}
It is clear that if $(\psi_t)$ is a group, then the backward invariant set $\mathcal W$ is the unit disc and thus, it has a unique petal. If $(\psi_t)$ is not a group with Denjoy\,--\,Wolff point $\tau$,  it was proved in \cite[Propositions 13.4.2 and 13.4.12]{BCD-Book} that:
\begin{enumerate}
\item Every petal $\Delta$ is a simply connected domain,
\item $\psi_t(\Delta)=\Delta$ for all $t\geq 0$ and $(\psi_t|_\Delta)$ is a continuous group of automorphisms of $\Delta$,
\item  $\tau\in \partial \Delta$, and
\item  there exists $\sigma\in\partial \D\cap\partial \Delta$ (possibly $\sigma=\tau$) such that the backward orbits $\psi_{-t}(z_0):= (\psi_t|_\Delta)^{-1}(z_0)\to\sigma$ as $t\to+\infty$ for every ${z_0\in \Delta}$.  Moreover, $\sigma$ is a BRFP of the semigroup, and $\partial\Delta$ contains no boundary fixed points of~$(\psi_t)$ other than $\sigma$ and~$\tau$. We say that the petal $\Delta $ is \dff{parabolic} if $\sigma=\tau$. Otherwise, we say that the petal is \dff{hyperbolic}.
\item  If $(\psi_{t})$ has a parabolic petal, then $(\psi_t)$ is parabolic.
\item Every backward orbit $[0,+\infty)\ni t\mapsto z_t:=\psi_t^{-1}(z_0)$ starting from a point~${z_0\in\mathcal W^\circ}$ is \dff{regular}, meaning that ${\sup_{t\ge0}\rho_\UD(z_t,z_{t+1})<+\infty}$.
\item Suppose $\sigma\in \partial \D$ is a boundary regular fixed point of $(\psi_t)$ different from~$\tau$. Then there exists a unique hyperbolic petal $\Delta$ such that $\sigma\in \partial \Delta$.
\end{enumerate}
Following the conventional terminology in dynamical systems, we call the above BRFP~$\sigma$ the \hbox{\dff{$\alpha$-point}} of the corresponding hyperbolic petal~$\Delta$. In case of a parabolic petal, by its $\alpha$-point we mean the Denjoy\,--\,Wolff point~$\tau$.

In the rest of this section we suppose that $\varphi\in\U(\D)$ is a non-elliptic self-map with Denjoy\,--\,Wolff point $\tau\in\partial\D$, that $\varphi$ is different from a hyperbolic automorphism, and that $(\psi_t)$ is a continuous one-parameter semigroup of~$\UD$ satisfying $\id_\UD\neq\psi_1\in\Zen(\varphi)$.

\begin{lemma}\label{LM_petal-into-petal}
In the above assumptions, $\varphi(\mathcal W)\subset \mathcal W$, where $\mathcal W$ stands for the backward invariant set of~$(\psi_t)$. In particular, the image $\varphi(\Delta)$  of any petal~$\Delta$ of~$(\psi_t)$ is contained again in some petal of~$(\psi_t)$ (which may coincide with~$\Delta$).
\end{lemma}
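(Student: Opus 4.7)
The proof should be short and conceptual, resting on the defining identity $\mathcal W = \bigcap_{n\in\Natural}\psi_1^{\circ n}(\D)$ recalled right before the lemma.

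The plan is to first establish the main inclusion $\varphi(\mathcal W)\subset\mathcal W$ by a one-line orbit-chasing argument. Pick any $z\in\mathcal W$ and any $n\in\Natural$; by definition of~$\mathcal W$, there is some $w_n\in\UD$ with $z=\psi_1^{\circ n}(w_n)$. Using the hypothesis $\varphi\circ\psi_1=\psi_1\circ\varphi$ (iterated $n$ times) we get
\[
 \varphi(z)=\varphi\big(\psi_1^{\circ n}(w_n)\big)=\psi_1^{\circ n}\big(\varphi(w_n)\big)\in\psi_1^{\circ n}(\UD).
\]
Since $n$ was arbitrary, $\varphi(z)\in\bigcap_{n}\psi_1^{\circ n}(\UD)=\mathcal W$. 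This gives the first assertion.

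For the ``in particular'' part, we need $\varphi(\mathcal W^\circ)\subset\mathcal W^\circ$, because petals are by definition connected components of~$\mathcal W^\circ$. Since $\varphi\in\U(\UD)$ is a non-constant holomorphic map, it is an open mapping; hence $\varphi(\mathcal W^\circ)$ is open, and by the inclusion just established it is contained in~$\mathcal W$, so in fact in~$\mathcal W^\circ$. Given a petal $\Delta$, the image $\varphi(\Delta)$ is connected (continuous image of a connected set) and lies in $\mathcal W^\circ$, hence entirely inside one connected component of $\mathcal W^\circ$, i.e.\ inside a single petal.

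There is essentially no obstacle here: the whole argument uses only the semigroup property (which guarantees $\psi_n=\psi_1^{\circ n}$ so that the intersection defining $\mathcal W$ can be taken along integers), the commutation $\varphi\circ\psi_1=\psi_1\circ\varphi$, and the open mapping theorem for univalent maps. The hypotheses that $\varphi$ is non-elliptic and not a hyperbolic automorphism and that $\psi_1\neq\id_\UD$ play no role in this particular lemma; they are inherited from the ambient running assumptions of the section and will become relevant only in the deeper results that apply Lemma~\ref{LM_petal-into-petal}.
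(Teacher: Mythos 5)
Your proof is correct, but it takes a different route from the paper's. You work directly in the disc: from ${z=\psi_1^{\circ n}(w_n)}$ and the iterated commutation ${\varphi\circ\psi_1^{\circ n}=\psi_1^{\circ n}\circ\varphi}$ you get ${\varphi(z)\in\psi_1^{\circ n}(\UD)}$ for every $n$, hence ${\varphi(\mathcal W)\subset\mathcal W}$, and the petal statement then follows from openness of the non-constant holomorphic map~$\varphi$ exactly as in the paper. The paper instead transfers everything through the Koenigs function~$h$ of~$(\psi_t)$: using Abel's equation it writes ${h(\mathcal W)=\bigcap_{n}\big(h(\UD)+n\big)}$, invokes \cite[Theorem~3.4]{CDG-Centralizer} to produce a univalent ${g:h(\UD)\to h(\UD)}$ with ${h\circ\varphi=g\circ h}$ and ${g(w+1)=g(w)+1}$, and concludes ${h(\varphi(\mathcal W))\subset h(\mathcal W)}$ from ${g(h(\UD))\subset h(\UD)}$. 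Your argument is more elementary and more general: it needs only the commutation ${\varphi\circ\psi_1=\psi_1\circ\varphi}$, the identity ${\mathcal W=\bigcap_n\psi_1^{\circ n}(\UD)}$ recorded in the definition, and non-constancy of~$\varphi$ for the openness step (univalence, non-ellipticity, and ${\psi_1\neq\id_\UD}$ indeed play no role here, as you note); the paper's route costs an external citation but keeps the lemma inside the Koenigs-model formalism on which the subsequent, deeper lemmas of that section are built. Both are complete proofs.
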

\begin{proof}
Denote by $h$ the Koenigs function of~$(\psi_t)$.
Using Abel's equation~\eqref{EQ_Abel-eq-for-semigroup}, and taking into account that $h$ is univalent, it is easy to see that
\begin{equation}\label{EQ_h-of-invariant}
  h(\mathcal W)~=~\bigcap_{t\ge0}\,h(\UD)+t~=~\bigcap_{n\in\Natural}\,h(\UD)+n.
\end{equation}
The second equality holds because, again by~\eqref{EQ_Abel-eq-for-semigroup}, ${h(\UD)+t_2\subset h(\UD)+t_1}$ whenever ${t_2\ge t_1\ge0}$.

By the hypothesis $\varphi\in\Zen(\psi_1)$. Therefore, by \cite[Theorem~3.4]{CDG-Centralizer}, there exists a univalent holomorphic map ${g:h(\UD)\to h(\UD)}$ such that ${h\circ\varphi=g\circ\varphi}$ and ${g(w+1)}={g(w)+1}$ for any ${w\in h(\UD)}$. Therefore,
\begin{equation}\label{EQ_h-of-varphi-of-invariant}
  h\big(\varphi(\mathcal W)\big)~=~g\big(h(\mathcal W)\big)~=~g\,\Big(\,\bigcap_{n\in\Natural}\,h(\UD)+n\,\Big)
   ~=~\bigcap_{n\in\Natural}\,g\big(h(\UD)+n\big)~=~\bigcap_{n\in\Natural}\,g\big(h(\UD)\big)+n.
\end{equation}

Combining the fact that $g\big(h(\UD)\big)\subset h(\UD)$ with the equalities~\eqref{EQ_h-of-invariant} and~\eqref{EQ_h-of-varphi-of-invariant}, we see that ${h\big(\varphi(\mathcal W)\big)}\subset{h(\mathcal W)}$. Since $h$ is univalent, this proves the inclusion ${\varphi(\mathcal W)\subset\mathcal W}$. Finally, the statement regarding the petals follows immediately from this inclusion because the map $\varphi$ is continuous and open.
\end{proof}

Note that the petals of~$(\psi_t)$ are pairwise disjoint. Therefore, in Lemma~\ref{LM_petal-into-petal} proved above, the petal~$\Delta'$ that contains $\varphi(\Delta)$ is unique.
\begin{lemma}\label{LM_correspondence of BRFPs}
Let $\Delta'$ be the petal of~$(\psi_t)$ defined as above, i.e. $\varphi(\Delta)\subset\Delta'$. Then
$$
 \varphi(\sigma):=\anglim_{z\to\sigma}\varphi(z)\,=\,\sigma',
$$
where $\sigma$ and $\sigma'$ stand for the $\alpha$-points of the petals~$\Delta$ and $\Delta'$, respectively.
\end{lemma}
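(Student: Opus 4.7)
The plan is to probe the angular limit of $\varphi$ at $\sigma$ along a concrete backward orbit in $\Delta$, and then transport it to $\Delta'$ via the commutativity. Fix $z_0\in\Delta$ and set $z_n:=(\psi_1|_\Delta)^{\circ(-n)}(z_0)$ for $n\ge 0$; since $\psi_1|_\Delta$ is an automorphism of $\Delta$, this is well-defined, and $z_n\to\sigma$ by the very definition of the $\alpha$-point. The commutativity $\varphi\circ\psi_1=\psi_1\circ\varphi$ gives $\psi_1(\varphi(z_n))=\varphi(z_{n-1})$, and since $\varphi(z_n)\in\Delta'$ with $\psi_1|_{\Delta'}$ an automorphism of $\Delta'$, an easy induction yields $\varphi(z_n)=(\psi_1|_{\Delta'})^{\circ(-n)}(\varphi(z_0))$; hence $(\varphi(z_n))$ is a backward orbit in $\Delta'$ and $\varphi(z_n)\to\sigma'$. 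Moreover, as $\psi_1|_\Delta$ is an isometry of the hyperbolic metric $\rho_\Delta$, the steps $\rho_\Delta(z_n,z_{n+1})=\rho_\Delta(z_0,z_1)$ are constant in $n$, so by Schwarz--Pick $\rho_\UD(z_n,z_{n+1})\le\rho_\Delta(z_0,z_1)$, i.e., $(z_n)$ is a \emph{regular} backward orbit of $\psi_1$ in $\UD$.

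In the hyperbolic petal case, $\sigma\neq\tau$ is a repelling fixed point of $\psi_1$, and a classical result on regular backward orbits (Poggi-Corradini; see e.g.\ \cite[Ch.~14]{BCD-Book}) guarantees that $z_n\to\sigma$ non-tangentially. On the other hand, $\sigma$ is a \emph{contact point} for $\varphi$, as recalled in Section~\ref{Notation}, so the angular limit $\anglim_{z\to\sigma}\varphi(z)$ exists in $\partial\UD$; combined with the fact that $\varphi(z_n)\to\sigma'$ along the non-tangential sequence $(z_n)$, this angular limit must equal $\sigma'$, as desired.

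The parabolic petal case ($\sigma=\tau$) is the main obstacle, since now $(z_n)$ approaches $\tau$ \emph{tangentially} and the Lindel\"of-type argument above is unavailable; instead one must show directly that $\Delta'$ is itself parabolic, whence $\sigma'=\tau=\varphi(\sigma)$. I would argue by contradiction through the Koenigs model of $(\psi_t)$. Let $h$ be its Koenigs function and $g:=h\circ\varphi\circ h^{-1}$ the intertwiner, which by \cite[Theorem~3.4]{CDG-Centralizer} extends to a univalent self-map of $\Omega:=h(\UD)$ satisfying $g(w+1)=g(w)+1$. Then $h(\Delta)$ is a maximal horizontal half-plane in $\Omega$ and $g(h(\Delta))\subset h(\Delta')$. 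Using the $1$-periodicity of $g(w)-w$, we may write $g(w)-w=F(e^{2\pi iw})$ for some $F$ holomorphic on a punctured disc at $0$. If $\Delta'$ were hyperbolic, $h(\Delta')$ would be a horizontal strip of bounded width, forcing $\Im F(\zeta)\to-\infty$ as $\zeta\to 0$; but this is incompatible with the classification of isolated singularities (bounded/removable contradicts $\Im F\to-\infty$, while a pole or an essential singularity prevents $\Im F$ from remaining uniformly bounded above on small punctured discs, by Casorati--Weierstrass). Hence $\Delta'$ must be parabolic, and the lemma follows.
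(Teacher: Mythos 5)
Your proof is correct, but it follows a genuinely different route from the paper's. The paper argues uniformly, with no case distinction: it takes the \emph{continuous} regular backward orbit $(z_t)\subset\Delta$ from \cite[Proposition~13.4.2]{BCD-Book}, transports it to $\Delta'$ by commutativity exactly as you do, and then invokes the Lindel\"of-type theorem for sequences with bounded hyperbolic step \cite[Theorem~1.4]{Canonicalmodel}, which simultaneously yields the \emph{existence} of the angular limit of $\varphi$ at~$\sigma$ and identifies it with~$\sigma'$ — so neither the tangential/non-tangential dichotomy nor any a priori boundary regularity of $\varphi$ at~$\sigma$ is needed. You instead split into cases: for a hyperbolic petal you import Bracci's contact-point theorem for commuting maps (recalled in Section~\ref{Notation}) to get existence of the angular limit, plus the classical fact that regular backward orbits approach a repelling point non-tangentially; for a parabolic petal you prove directly, via the $1$-periodicity of $g-\mathrm{id}$ in the Koenigs coordinates and the classification of isolated singularities, that $\Delta'$ must be parabolic — a fact which the paper obtains only afterwards, in Lemma~\ref{LM_parabolic-petal-into-itself}, as a \emph{consequence} of the present lemma; so your argument is a self-contained alternative mechanism for that step as well. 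Two small imprecisions, neither affecting correctness: ruling out a pole of $F$ at the puncture is not literally Casorati--Weierstrass (use instead that near a pole the image of every small punctured disc contains the complement of a compact set, hence points with $\Im F$ large positive, or note that $e^{-iF}$ would extend with a zero of positive order, contradicting single-valuedness of $F$); and when $h(\Delta)$ is a lower half-plane the singularity sits at~$\infty$ (equivalently, use $e^{-2\pi i w}$) and the blow-up is $\Im F\to+\infty$, so state the orientation or reduce to one case by symmetry.
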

\begin{proof}
Fix any point $z_0\in\Delta$. Then according \cite[Proposition~13.4.2]{BCD-Book}, there is a unique regular backward orbit ${(z_t)\subset\Delta}$ of~$(\psi_t)$ starting from~$z_0$ and converging to~$\sigma$ as ${t\to+\infty}$. More precisely, for each ${t\ge0}$ there is a unique point ${z_t\in\Delta}$  such that ${\psi_t(z_t)=z_0}$ and the map ${[0,+\infty)\ni t\mapsto z_t}$ is continuous and injective, with $\lim_{t\to+\infty}z_t=\sigma$ and
 \begin{equation}\label{EQ_bdd-hyperbolic-step}
	\sup_{t\ge0} \rho_\D(z_t,z_{t+1})<+\infty.
 \end{equation}

Since $\varphi$ commutes with $\psi_1$ and hence with $\psi_n$ for any ${n\in\Natural}$, we have
$$
\psi_n\big(\varphi(z_n)\big)=\varphi\big(\psi_n(z_n)\big)=\varphi(z_0).
$$
Taking into account that $z_n\in\Delta$ and hence ${\varphi(z_n)\subset \Delta'}$ for all ${n\in\Natural\cup\{0\}}$, we see that the points $w_n$ lie on the backward orbit of~$w_0:=\varphi(z_0)\in\Delta'$. Therefore, ${w_n\to\sigma'}$ as ${n\to+\infty}$.

Thus, we see that
$\varphi(z_n)\to\sigma'$ for a sequence $(z_n)$ converging to~$\sigma$ and such that  the hyperbolic distance $\rho_\UD(z_n,z_{n+1})$, according to~\eqref{EQ_bdd-hyperbolic-step}, is bounded. This leads, see e.g. \cite[Theorem 1.4]{Canonicalmodel}, to the desired conclusion that $\varphi$ has angular limit at~$\sigma$ equal to~$\sigma'$.
\end{proof}

\begin{lemma}\label{LM_parabolic-petal-into-itself}
If $\Delta$ is a parabolic petal of $(\psi_t)$, then $\varphi(\Delta)\subset\Delta$.
\end{lemma}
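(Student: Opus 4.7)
My plan is as follows. By Lemma~\ref{LM_petal-into-petal}, $\varphi(\Delta)\subset\Delta'$ for a unique petal $\Delta'$ of $(\psi_t)$, so it suffices to show $\Delta'=\Delta$. The first step is to observe that, since $\varphi$ is non-elliptic and not a hyperbolic automorphism, Behan's theorem (recalled in Section~\ref{SS_cummuting}) gives $\varphi$ and $\psi_1$ the same Denjoy--Wolff point $\tau$, so in particular $\anglim_{z\to\tau}\varphi(z)=\tau$. Since $\Delta$ is parabolic, its $\alpha$-point $\sigma$ equals $\tau$, and Lemma~\ref{LM_correspondence of BRFPs} forces the $\alpha$-point of $\Delta'$ to be $\varphi(\tau)=\tau$ as well. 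Hence $\Delta'$ is also a parabolic petal, and what remains is to rule out $\Delta'\neq\Delta$.

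For this I would pass to the canonical holomorphic model $(S,h,z\mapsto z+t)$ of $(\psi_t)$, with $S=\UH$ in the positive hyperbolic step case and $S=\C$ in the zero hyperbolic step case, and set $\Omega:=h(\UD)\subset S$. By \cite[Theorem~3.3]{CDG-Centralizer}, the lift $g:=h\circ\varphi\circ h^{-1}$ extends to a univalent self-map of $S$ satisfying $g(w+1)=g(w)+1$. The crucial geometric observation is that $h(\Delta)$, being invariant under the full group extension of $(\psi_t|_\Delta)$, must be a horizontal strip $\R\times I\subset\Omega$; moreover the parabolic condition---namely that the $-\infty_\Re$-prime end of this strip coincide, via the Carath\'eodory correspondence, with the Denjoy--Wolff prime end of $\Omega$---forces $h(\Delta)$ to be a maximal half-plane $\{w\in S:\Im w>a\}\subset\Omega$ (or $\{\Im w<b\}$ in the zero hyperbolic step case), where $a$ is the smallest real for which this inclusion holds.

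It then suffices to check $g(\{\Im w>a\})\subset\{\Im w>a\}$. In the positive hyperbolic step case, this is immediate from Theorem~\ref{TH_affine}\,\ref{IT_affine:extension}, which writes $g(w)=w+G(w)$ with $G\colon S\to S\cup\R$, so $\Im g(w)\ge\Im w$ on $\UH$. In the zero hyperbolic step case, \cite[Theorem~6.4]{CDG-Centralizer} forces $g$ to be an affine translation $g(w)=w+c$; the constraint $\Omega+c\subset\Omega$ together with the minimality of $a$ then gives $\Im c\ge 0$ (were $\Im c<0$, the half-plane $\{\Im w>a+\Im c\}\supsetneq\{\Im w>a\}$ would also lie in $\Omega$, contradicting minimality), and again the inclusion follows. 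In both cases $g(h(\Delta))\subset h(\Delta)$, whence $\varphi(\Delta)\subset\Delta$.

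The hardest step I expect is the half-plane characterization of $h(\Delta)$ in the second paragraph. While it is morally forced by the requirement that the $\alpha$-point equal $\tau$ and by the prime-end structure of simply connected subdomains of $\UH$ (respectively $\C$), a careful argument relies on the detailed description of parabolic petals via Carath\'eodory boundary behaviour, a fact that should be accessible from the petal theory developed in \cite{BCD-Book}. Once this identification is in place, the remaining computation using Theorem~\ref{TH_affine} and the affine characterization of $\Zen(\psi_1)$ in the zero hyperbolic step case is short.
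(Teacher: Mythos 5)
Your argument is correct, and its second half takes a genuinely different route from the paper's. The first step coincides: using Lemma~\ref{LM_correspondence of BRFPs} and the fact that the $\alpha$-point of a parabolic petal is the common Denjoy--Wolff point, the target petal $\Delta'$ must again be parabolic. From there the paper finishes more economically: by \cite[Theorem~13.5.7]{BCD-Book} there are at most two parabolic petals; if there is only one, then $\Delta'=\Delta$ at once, and if there are two, their Koenigs images are a lower and an upper half-plane, which forces zero hyperbolic step, hence $h\circ\varphi=h+c$, and a translation cannot map one of these half-planes into the other, so again $\Delta'=\Delta$. You instead split according to the hyperbolic step and show directly that the lift $g$ maps $h(\Delta)$ into itself: in the positive-step case via Theorem~\ref{TH_affine}\,\ref{IT_affine:extension}--\ref{IT_affine:extension-bis} (equivalently \cite[Proposition~7.2]{CDG-Centralizer}), which gives $\Im g(w)\geq\Im w$, and in the zero-step case via affinity together with maximality of the half-plane $h(\Delta)$. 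This avoids the ``at most two parabolic petals'' count, at the price of heavier machinery and of the maximality claim, which you assert rather than prove; note, though, that no prime-end analysis is needed: the half-plane form of $h(\Delta)$ is precisely \cite[Theorem~13.5.7]{BCD-Book} (which the paper itself cites), and maximality then follows immediately because any larger horizontal half-plane contained in $\Omega=h(\UD)$ is invariant under $w\mapsto w+t$, hence lies in $h(\mathcal W)=\bigcap_{t\ge0}\big(h(\UD)+t\big)$ and therefore in the component $h(\Delta)$. Finally, in the zero-step case the alternative $h(\Delta)=\{\Im w<b\}$ requires the symmetric conclusion $\Im c\leq 0$, and in the positive-step case the base space may be $-\UH$; your argument covers both after the obvious sign changes, but they should be stated.
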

\begin{proof}
By the definition of a parabolic petal, the $\alpha$-point~$\sigma$ of the petal~$\Delta$ coincides with  the Denjoy\,--\,Wolff point~$\tau$ of the semigroup~$(\psi_t)$, which in turn coincides with the Denjoy\,--\,Wolff point of~$\varphi$. Therefore, ${\varphi(\sigma)=\sigma}$. According to Lemma~\ref{LM_correspondence of BRFPs}, it follows that the petal~$\Delta'$ containing $\varphi(\Delta)$ must be also parabolic.

By \cite[Theorem~13.5.7]{BCD-Book}, $(\psi_t)$ has at most two parabolic petals. If it has exactly one parabolic petal, then we are done. If there are two distinct parabolic petals, then (again according \cite[Theorem~13.5.7]{BCD-Book}) the images $P_k$, $k=1,2$, of the two parabolic petals w.r.t. the Koenigs map~$h$ of~$(\psi_t)$ are of the form ${P_1=\{w:\Im w<a\}}$ and ${P_2=\{w:\Im w>b\}}$ for some ${b\ge a}$. As a consequence, the semigroup $(\psi_t)$, and hence the self-map $\psi_1$, are of zero hyperbolic step. Since ${\varphi\in\Zen(\psi_1)}$, by \cite[Proposition~4.3]{CDG-Centralizer}, $\varphi$ is affine w.r.t.~$\psi_1$, i.e. ${h\circ\varphi}={h+c}$ for a suitable ${c\in\C}$. It follows that if $\Delta$ and $\Delta'$ were two different parabolic petals, then the translation by $c$ or $-c$ would map $P_1$ into~$P_2$, which is clearly impossible. Thus, ${\varphi(\Delta)\subset\Delta'=\Delta}$.
\end{proof}

\begin{corollary}
\label{CR_petals-iff-points}
In the notation of Lemma~\ref{LM_correspondence of BRFPs}, $\varphi(\Delta)\subset\Delta$ if and only if ${\varphi(\sigma)=\sigma}$.
\end{corollary}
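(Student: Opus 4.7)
The plan is to prove the two implications separately, with the converse requiring a case split according to whether $\Delta$ is parabolic or hyperbolic. The easy direction $\varphi(\Delta)\subset\Delta \Rightarrow \varphi(\sigma)=\sigma$ follows immediately from the setup: because the petals of $(\psi_t)$ are pairwise disjoint and $\Delta'$ was chosen (uniquely) as the petal containing $\varphi(\Delta)$, the inclusion $\varphi(\Delta)\subset\Delta$ forces $\Delta'=\Delta$, hence $\sigma'=\sigma$, and Lemma~\ref{LM_correspondence of BRFPs} then yields $\varphi(\sigma)=\sigma'=\sigma$.

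For the converse, I would distinguish two cases based on the type of the petal~$\Delta$. If $\Delta$ is parabolic, then by our convention its $\alpha$-point coincides with the common Denjoy\,--\,Wolff point~$\tau$, so $\sigma=\tau$ and the hypothesis $\varphi(\sigma)=\sigma$ holds automatically. In this subcase the conclusion is not drawn from the assumption on $\sigma$ at all, but rather directly from Lemma~\ref{LM_parabolic-petal-into-itself}, which guarantees $\varphi(\Delta)\subset\Delta$ for every parabolic petal. If instead $\Delta$ is hyperbolic, then $\sigma\neq\tau$, and applying Lemma~\ref{LM_correspondence of BRFPs} together with the assumption $\varphi(\sigma)=\sigma$ shows that the $\alpha$-point $\sigma'$ of $\Delta'$ equals $\sigma$, so in particular $\sigma'\neq\tau$ and $\Delta'$ must also be hyperbolic. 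Invoking property~(7) of petals recalled at the beginning of Section~\ref{petals} (the one-to-one correspondence between repelling fixed points of the semigroup and its hyperbolic petals), I would then conclude that $\Delta'=\Delta$, hence $\varphi(\Delta)\subset\Delta$.

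The main obstacle will not be technical but conceptual bookkeeping: one has to be careful that in the parabolic subcase the hypothesis $\varphi(\sigma)=\sigma$ is vacuous, so the content lies entirely in Lemma~\ref{LM_parabolic-petal-into-itself}, whereas in the hyperbolic subcase the conclusion relies on the uniqueness of the hyperbolic petal associated with a given repelling fixed point. Beyond this minor split, the corollary is a direct bookkeeping consequence of Lemmas~\ref{LM_petal-into-petal}, \ref{LM_correspondence of BRFPs} and~\ref{LM_parabolic-petal-into-itself}, together with the preliminary structural results on petals.
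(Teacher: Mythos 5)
Your proposal is correct and follows essentially the same route as the paper: the authors also split according to whether $\Delta$ is hyperbolic or parabolic, handling the hyperbolic case via Lemma~\ref{LM_petal-into-petal}, Lemma~\ref{LM_correspondence of BRFPs} and the fact that distinct petals have distinct $\alpha$-points unless both are parabolic, and observing that in the parabolic case both conditions hold automatically (the latter by Lemma~\ref{LM_parabolic-petal-into-itself}, the former since $\sigma=\tau$ is the Denjoy\,--\,Wolff point of~$\varphi$). Your bookkeeping is accurate, so nothing further is needed.
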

\begin{proof}
If $\Delta$ is a hyperbolic petal, then the corollary follows from Lemma~\ref{LM_petal-into-petal} and the fact that any two distinct petals have distinct $\alpha$-points unless both petals are parabolic.

So suppose $\Delta$ is a parabolic petal. Then $\varphi(\Delta)\subset\Delta$ by Lemma~\ref{LM_parabolic-petal-into-itself}. Moreover, as we have seen in the proof of Lemma~\ref{LM_parabolic-petal-into-itself}, ${\varphi(\sigma)=\sigma}$ because in this case, $\sigma=\tau$ is the Denjoy\,--\,Wolff point for~$(\psi_t)$ and hence also for~$\varphi$. These two observations complete the proof.
\end{proof}

\begin{lemma}\label{LM-only-one-parabolic-petal}
 Suppose that $\varphi$ is not an element of~$(\psi_t)$. Then $(\psi_t)$ can have at most one parabolic petal.
\end{lemma}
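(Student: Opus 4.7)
The plan is to argue by contradiction. Suppose $(\psi_t)$ has two distinct parabolic petals $\Delta_1$ and $\Delta_2$. Following the same line as in the proof of Lemma~\ref{LM_parabolic-petal-into-itself}, invoking \cite[Theorem~13.5.7]{BCD-Book}, this forces $(\psi_t)$ to be parabolic of zero hyperbolic step, and there exist real numbers $a\le b$ (after, if necessary, interchanging the labels of the two petals) such that
\[
h(\Delta_1)=\{w\in\C:\Im w<a\},\qquad h(\Delta_2)=\{w\in\C:\Im w>b\},
\]
where $h$ is the Koenigs function of~$(\psi_t)$.

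Next I would exploit zero hyperbolic step: since $\varphi\in\Zen(\psi_1)$ and $\psi_1$ is parabolic of zero hyperbolic step, $\varphi$ is affine with respect to~$\psi_1$ (see \cite[Proposition~4.3]{CDG-Centralizer} and the discussion following Definition~\ref{Def:affine}), so there exists $c\in\C$ with ${h\circ\varphi=h+c}$. By Lemma~\ref{LM_parabolic-petal-into-itself}, $\varphi(\Delta_j)\subset\Delta_j$ for $j=1,2$, which at the Koenigs-domain level reads
\[
\{\Im w<a\}+c\subset\{\Im w<a\}\quad\text{and}\quad\{\Im w>b\}+c\subset\{\Im w>b\}.
\]
The first inclusion gives $\Im c\le 0$ and the second gives $\Im c\ge 0$, so $c\in\R$.

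It remains to rule out every sign of~$c$. The case $c=0$ would yield $\varphi=\id_\UD$, contradicting that $\varphi$ is non-elliptic. The case $c>0$ yields $\varphi=h^{-1}\circ(h+c)=\psi_c$, contradicting the standing hypothesis that $\varphi\notin(\psi_t)$. The case $c<0$ is the one requiring a little care: from $\varphi(\UD)\subset\UD$ we get $h(\UD)+c\subset h(\UD)$, equivalently $h(\UD)\subset h(\UD)+|c|$, whereas the fact that $\psi_{|c|}$ is a self-map of~$\UD$ gives the reverse inclusion $h(\UD)+|c|\subset h(\UD)$. Hence $h(\UD)+|c|=h(\UD)$, so $\psi_{|c|}=h^{-1}\circ(\cdot+|c|)\circ h$ is surjective, i.e.~an automorphism of~$\UD$. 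A short semigroup-decomposition argument (writing any $t\ge 0$ as $t=n|c|+r$ with $r\in[0,|c|)$) then shows that every $\psi_t$ is surjective, so $(\psi_t)$ is a group; but a group has $\mathcal W=\UD$ and therefore only one petal, contradicting the existence of two distinct petals $\Delta_1,\Delta_2$.

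The only genuinely delicate step is the treatment of $c<0$; all the other cases follow immediately from the affineness of~$\varphi$ with respect to~$\psi_1$ combined with Lemma~\ref{LM_parabolic-petal-into-itself}. In the $c<0$ case, the key observation is that containment in both directions, $h(\UD)+|c|\subset h(\UD)$ (from the semigroup) and $h(\UD)\subset h(\UD)+|c|$ (from $\varphi$ being a self-map with negative shift constant), together force the translation $w\mapsto w+|c|$ to be a bijection of~$h(\UD)$, which cascades into $(\psi_t)$ being a one-parameter group and hence contradicts the hypothesis of two parabolic petals.
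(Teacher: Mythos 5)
Your proof is correct, and its skeleton is the same as the paper's: two parabolic petals force zero hyperbolic step and the half-plane picture via \cite[Theorem~13.5.7]{BCD-Book}, hence $h\circ\varphi=h+c$; invariance of the two half-planes forces $c\in\Real$; and then $c\ge0$ leads to $\varphi=\psi_c$, contradicting the hypothesis. The only genuine divergence is the exclusion of $c<0$: the paper simply quotes \cite[Theorem~3.1\,(B)]{CDG-Centralizer} to get $c\ge0$ from $h(\UD)+c\subset h(\UD)$, whereas you give a self-contained replacement, showing $h(\UD)+|c|=h(\UD)$, so $\psi_{|c|}\in\Aut$, the semigroup is a group, and a group has a single petal~--- which buys independence from that external result at the cost of a few extra lines. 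Two small polish points: in the decomposition step, the cleanest justification is that $\psi_{|c|}=\psi_r\circ\psi_{|c|-r}$ surjective forces $\psi_r$ surjective for every $r\in[0,|c|]$ (or simply cite the remark in Section~\ref{SS_one-param-semigr} that a continuous one-parameter semigroup containing an automorphism $\neq\id_\UD$ is a group); and the case $c=0$ can equally be absorbed into the ``$\varphi=\psi_c\in(\psi_t)$'' contradiction, as the paper does.
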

\begin{proof}
Suppose that $(\psi_t)$ has two distinct parabolic petals. Then, as in the proof of Lemma~\ref{LM_parabolic-petal-into-itself}, we see that ${h\circ\varphi}={h+c}$, where the constant~$c$ must be real because each of the half-planes $P_1$ and $P_2$ are mapped by the translation ${w\mapsto w+c}$ into itself. Moreover, ${h(\UD)+c}={h\big(\varphi(\UD)\big)}\subset h(\UD)$ and hence, by \cite[Theorem~3.1\,(B)]{CDG-Centralizer}, ${c\ge0}$. It follows that ${\varphi=\psi_c}$, which contradicts the hypothesis.
\end{proof}

We conclude this section with a lemma, which is definitely known for specialists and holds also for non-univalent self-maps, but lacking a reference we prefer to include a short proof making use of commutativity.
\begin{lemma}\label{LM_uniqueness}
Let $\varphi_1,\varphi_2\in\U(\UD)$ be such that $\varphi_1^{\circ k}=\varphi_2^{\circ k}$ for some ${k\in\Natural}$. If $\varphi_1$ is non-elliptic, then~${\varphi_2=\varphi_1}$.
\end{lemma}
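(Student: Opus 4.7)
The plan is to exploit the fact that both $\varphi_1$ and $\varphi_2$ commute with the common iterate $\psi:=\varphi_1^{\circ k}=\varphi_2^{\circ k}$, and then to pass to the canonical holomorphic model of $\psi$ in order to reduce the statement to a question about translations on the base space.

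First I would check that $\psi$ is a genuinely non-elliptic self-map different from $\id_\UD$. The inequality $\psi\neq\id_\UD$ holds because otherwise $\varphi_1$ would be a periodic univalent self-map, hence an automorphism of~$\UD$ with a periodic orbit, which forces it to be elliptic and contradicts the hypothesis. Non-ellipticity of $\psi$ is immediate, since $\psi$ inherits the Denjoy\,--\,Wolff point $\tau\in\partial\UD$ of $\varphi_1$, with multiplier $\varphi_1'(\tau)^k\in(0,1]$.

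Next, since $\varphi_1,\varphi_2\in\Zen(\psi)$, I would invoke \cite[Theorem~3.4]{CDG-Centralizer}: letting $(S,h,w\mapsto w+1)$ be the canonical holomorphic model of~$\psi$, each $\varphi_i$ corresponds to a univalent self-map $g_i\colon S\to S$ satisfying $h\circ\varphi_i=g_i\circ h$ and $g_i(w+1)=g_i(w)+1$. Writing out
$$
g_i^{\circ k}\circ h\,=\,h\circ\varphi_i^{\circ k}\,=\,h\circ\psi\,=\,h+1
$$
on $h(\UD)$ and applying the identity principle gives $g_i^{\circ k}(w)=w+1$ for every $w\in S$. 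In particular $g_i^{\circ k}$ is a bijection of~$S$, so $g_i$ itself must be an automorphism of~$S$.

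The remaining step is to classify automorphisms of~$S$ that commute with the translation $w\mapsto w+1$, for each base space that can appear in Theorem~\ref{Thm:model}\,\ref{IT_HM-hyp}\,--\,\ref{IT_HM-para-0HS}: a horizontal strip, $\pm\UH$, or the whole plane~$\C$. In every one of these cases such an automorphism is necessarily a translation $g_i(w)=w+c_i$, with $c_i\in\R$ when $S$ is a strip or a half-plane, and $c_i\in\C$ when $S=\C$. The condition $g_i^{\circ k}(w)=w+kc_i=w+1$ then forces $c_i=1/k$, independently of~$i$. Therefore $g_1=g_2$, and conjugating back by~$h$ gives $\varphi_1=\varphi_2$. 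The main, rather mild, obstacle is the classification of automorphisms commuting with a translation on these base spaces; in the strip and half-plane cases one uses that such an automorphism has to fix the same boundary point at infinity with parabolic character, while in the $S=\C$ case univalence forces $g_i$ to be affine and commutation forces its linear coefficient to equal~$1$.
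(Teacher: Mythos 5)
Your proof is correct and follows essentially the same route as the paper's: pass to the canonical holomorphic model of the common iterate $\psi=\varphi_1^{\circ k}$, apply \cite[Theorem~3.4]{CDG-Centralizer} to obtain univalent self-maps $g_i$ of the base space commuting with $w\mapsto w+1$ and satisfying $g_i^{\circ k}(w)=w+1$, deduce that each $g_i$ is the translation by $1/k$, and conjugate back. The only (harmless) difference is the final classification step: you classify the commuting automorphisms of each possible base space (strip, half-plane, $\C$) directly, which in particular also covers the case $S=\C$ explicitly, whereas the paper argues via the correspondence between automorphisms of $S$ and of $\UD$.
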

\begin{proof}
Since $\varphi_1$ is univalent and non-elliptic, so is $\varphi_3:=\varphi_1^{\circ k}$. Let ${(S,h,z\mapsto{z+1})}$ stand for the canonical holomorphic model of~$\varphi_3$. Clearly, both $\varphi_1$ and $\varphi_2$ commute with $\varphi_3$. Therefore, see e.g. \cite[Theorem~3.4]{CDG-Centralizer}, $\varphi_m={h^{-1}\circ g_m\circ h}$, ${m=1,2}$, with certain ${g_m\in\U(S)}$ satisfying $g_m(w+1)={g_m(w)+1}$ for all ${w\in S}$. The equality $\varphi_2^{\circ k}=\varphi_1^{\circ k}=\varphi_3$ implies that
$$
  g_m^{\circ k}(w)=w+1,\quad m=1,2,\quad\text{for all}~w\in S.
$$
In particular, $g_1,g_2\in\mathsf{Aut}(S)$. Using the one-to-one correspondence between the automorphisms of~$S$ and those of~$\UD$ induced by a conformal mapping between $S$ and~$\UD$ and taking into account that the iterates of any $f\in\Aut$ have the same fixed points as~$f$, it is not difficult to see that both $g_1$ and~$g_2$ are translations along~$\Real$. Since ${g_1^{\circ k}=g_2^{\circ k}}$, it follows that ${g_1=g_2}$. Thus ${\varphi_2=\varphi_1}$.
\end{proof}

\section{Proof of Theorem \ref{TH_FixP}}\label{Sec6}
Recall that by the hypothesis $\varphi\in\U(\UD)$ is non-elliptic and different from a hyperbolic automorphism and that ${\psi_1\in\Zen(\varphi)\setminus\{\id_\UD\}}$. In particular, it follows that the semigroup~$(\psi_t)$ is  non-trivial and non-elliptic. Moreover, by Behan's Theorem, $\varphi$ and $(\psi_t)$ have the same Denjoy\,--\,Wolff point~$\tau$.

Denote by $(S,H,(z\mapsto z+t)_{t\geq 0})$ the canonical holomorphic model of~$(\psi_t)$.
\begin{proof}[\proofof{$\ref{IT_FixP-in-the-semigroup} \Rightarrow \ref{IT_FixP_wholeCOPS-contained}$}\nopunct\!] is completely trivial.
\phantom\qedhere
\end{proof}	

\begin{proof}[\proofof{$\ref{IT_FixP_wholeCOPS-contained}\Rightarrow \ref{IT_FixP_common}$}]
Since $(\psi_t)\subset\Zen(\varphi)$, by Proposition~\ref{PR_simple} we have ${H\circ \varphi}={H+c}$ for some ${c\in\C}$. The Koenigs function~$H$ has angular limits at all points on~$\UC$, possibly except for the Denjoy\,--\,Wolff point, see e.g. \cite[Corollary~11.1.7]{BCD-Book}. Let us check that $H(\sigma):={\anglim_{z\to\sigma}H(z)}=\infty$. Suppose that ${H(\sigma)\neq\infty}$. Then ${H\big(\varphi(r\sigma)\big)}={H(r\sigma)+c}{\to H(\sigma)+c}$ as ${r\to 1^-}$. By the hypothesis, $\sigma$ is a boundary fixed point of~$\varphi$, i.e. ${\varphi(r\sigma)\to\sigma}$ as ${r\to 1^-}$. According to the Lehto\,--\,Virtanen's Theorem (see e.g. \cite[Theorem~3.3.1]{BCD-Book}) it follows that
$$
 H(\sigma)=\lim_{r\to1^-}H\big(\varphi(r\sigma)\big)=H(\sigma)+c
$$
and hence  ${c=0}$, i.e. ${\varphi=\id_\UD}$. The latter contradicts the hypothesis. Thus, ${H(\sigma)=\infty}$ and, as a consequence, see e.g. \cite[Proposition~13.6.1]{BCD-Book}, $\sigma$ is a boundary fixed point of~$(\psi_t)$.
\phantom\qedhere
\end{proof}

\begin{proof}[\proofof{$\ref{IT_FixP_common} \Rightarrow \ref{IT_FixP-in-the-semigroup}$}]
By an old result of Heins \cite[Lemma~2.1]{Heins}, $\psi_1$ cannot be a hyperbolic automorphism because it commutes with $\varphi$, which is not a hyperbolic automorphism. Moreover, $\psi_1$ is not a parabolic automorphism because by \ref{IT_FixP_common}, it has a boundary fixed point $\sigma$ different from its Denjoy\,--\,Wolff point. It follows that the non-elliptic semigroup~$(\psi_t)$ cannot be extended to a group.
According to the regularity of~$\sigma$ we distinguish two cases.

\StepC{I}{$\sigma$ is a repelling fixed point of $(\psi_t)$} Let $\Delta$ be the hyperbolic petal of $(\psi_t)$ associated with $\sigma$, as explained in Section~\ref{petals}. Since $(\psi_t)$ is not a group, according to \cite[Theorems 13.2.7, 13.4.12]{BCD-Book}, there exists a univalent map $g$ from $\D$ onto $\Delta$ such that the formula $\widehat{\psi}_t:={g^{-1}\circ\psi_t\circ g}$,  ${t\geq0}$, defines a hyperbolic group~$(\widehat{\psi}_t)$  in~$\UD$ with Denjoy\,--\,Wolff point at~$-\sigma$.
	By Corollary~\ref{CR_petals-iff-points}, ${\varphi(\Delta)\subset\Delta}$. Therefore, $\widehat{\varphi}:={g^{-1}\circ\varphi\circ g}$ is an element of~$\U(\UD)$.
	Moreover,  $\psi_1\in\Zen(\varphi)$ implies that ${\widehat{\varphi}\in\Zen(\widehat{\psi}_1)}$. Since $\widehat{\psi}_1$ is a hyperbolic automorphism and by \cite[Remark 6.4]{CDG-Centralizer}, we conclude that there exists $t_0>0$ such that either $\widehat{\varphi}=\widehat{\psi}_{t_0}$ or $\widehat{\varphi}=\widehat{\psi}^{-1}_{t_0}$.
	If the latter alternative  would occur, then  $(\psi_{t_0}\circ \varphi)|_\Delta=\id_\Delta$ and hence, by the identity principle, ${\psi_{t_0}\circ \varphi=\id_\D}$, which is impossible because $\psi_1\not\in\Aut$. Therefore, $\widehat{\varphi}=\widehat{\psi}_{t_0}$.
	It immediately follows that $\psi_{t_0}|_\Delta=\varphi|_\Delta$ and hence, again by the identity principle, $\psi_{t_0}=\varphi$ as desired.

\StepC{II}{$\sigma$ is a super-repelling fixed point of $(\psi_t)$} According to \cite[Corollary~13.6.7]{BCD-Book}, we know that there is ${a\in \R}$ such that
\begin{equation}\label{Eq:super-repelling}
\lim_{z\to\sigma} \Im H(z)=a\quad \textrm{and }\quad \lim_{z\to\sigma} \Re H(z)=-\infty.
\end{equation}
Note that both limits are  unrestricted. Taking into account that $\sigma$ is a boundary fixed point of~$\varphi$, we therefore have
\begin{equation}\label{Eq:super-repelling2}
a=\lim_{r\to 1^{-}} \Im H(\varphi(r\sigma)).
\end{equation}
If $\psi_{1}$ is hyperbolic or parabolic of zero hyperbolic step,
then by \cite[Proposition~4.3]{CDG-Centralizer}, ${\varphi\in\Zen(\psi_{1})}$ implies that ${H\circ\varphi}={H+c}$ for a suitable ${c\in\C}$ and, in view of~\eqref{Eq:super-repelling} and \eqref{Eq:super-repelling2}, it follows that 	\begin{equation*}
		\Im c=\lim_{r\to1^{-}}\big(\Im H(\varphi(r\sigma))-\Im H(r\sigma)\big)=a-a=0.
	\end{equation*}
	Therefore, $c$ is a real number. Certainly $c\neq0$ because $\varphi\neq\id_\UD$. Moreover, if $c$ were negative, we would obtain that $\psi_{-c}\circ\varphi=\id_\D$, which is impossible because ${\psi_t\not\in\Aut}$ for any ${t>0}$. Hence, $c>0$ and it follows that ${\varphi=\psi_{t_0}}$ with ${t_0:=c}$. This completes the proof in the case when $(\psi_t)$ is hyperbolic or parabolic of zero hyperbolic step.

From now on we suppose that $\psi_1$ is parabolic of positive hyperbolic step.
 For the sake of clarity, we will also assume that the base space for $(\psi_t)$ is $S=\H$. The proof in the other case, i.e. for ${S=-\H}$, is completely similar.
	Then the fact that $\varphi\in\Zen(\psi_1)$ implies, by \cite[Proposition~7.2]{CDG-Centralizer}, that there exists $F\in \Hol(\D,\H\cup\R$) such that
	$$
	H\circ\varphi(z)=H(z)+F\big(e^{2\pi i H(z)}\big)\quad\text{for all~}~z\in\D.
	$$
We notice that the function $w\mapsto w+F(e^{2\pi iw})$ is univalent in $\H$, a fact that will be used later.

	By \eqref{Eq:super-repelling} and \eqref{Eq:super-repelling2}, we have
	\begin{equation}\label{EQ_from-Im-part}
		\lim_{r\to1^{-}}\Im F(e^{2\pi iH(r\sigma)})=\lim_{r\to1^{-}}\big(\Im H(\varphi(r\sigma))-\Im H(r\sigma)\big)=a-a=0.
	\end{equation}
By~\eqref{Eq:super-repelling}, for any ${p=e^{2\pi i \theta}}\in\partial\D$ there exist a natural $N=N(p)$ and a sequence $(r_n)=(r_n(p))$ in $(0,1)$ convergent to $1$ such that ${\Re H(r_n\sigma)=\theta-n}$ for all ${n\geq N}$. For this sequence~$(r_n)$, we have
	\begin{equation}\label{EQ_from-Im-part2}
		\lim_{n\to\infty}e^{2\pi i H(r_n \sigma)}=\lim_{n\to\infty}e^{2\pi i(\theta-n)}e^{-2\pi \Im H(r_n \sigma)}=pe^{-2\pi a}.
	\end{equation}
Since $H(\UD)\subset\UH$,  we have ${a\ge0}$. Below we consider separately the cases ${a>0}$ and ${a=0}$.
	
\StepG{Subcase}{ II.a: $a>0$.} In this case, $e^{-2\pi a}\in (0,1)$. Let $C_a$ be the circle centred at zero and radius $e^{-2\pi a}$. Taking into account that $C_a\subset\D$ and using \eqref{EQ_from-Im-part} and \eqref{EQ_from-Im-part2}, we deduce that ${\Im F(z)=0}$ for all ${z\in C_a}$. By the maximum principle for harmonic functions and the identity principle for holomorphic functions, it follows that ${F\equiv r}$ for some constant ${r\in\Real}$. Hence,  ${H\circ\varphi}={H+r}$. By essentially the same argument as in the case of zero hyperbolic case, we see that ${r>0}$ and therefore, ${\varphi=\psi_{t_0}}$ with~${t_0:=r}$, as desired.
	
\StepG{Subcase}{ II.b: $a=0$.}  Since $F\in \Hol(\D,\H\cup\R$), we have $\Im F$ is a non-negative harmonic function in the unit disc. By \cite[Corollary on p.\,38]{Hoffman}, there exists $E\subset\D$ of null measure such that the angular limit $\angle \lim_{z\to p} \Im F(z)$ exists  for every $p\in\partial\D\setminus E$.
By \eqref{EQ_from-Im-part2}, for every $p\in \partial \D$,
	the sequence $(e^{2\pi iH(r_n(p)\sigma)})$ converges non-tangentially to $p$. Therefore, by \eqref{EQ_from-Im-part}, we deduce that
	\begin{equation}\label{EQ_from-Im-part3}
		\angle \lim_{z\to p}\Im F(z)=0,\ \mathrm{for\ all\ }  p\in\partial\D\setminus E.
	\end{equation}
	Now, let us consider the function in $\Hol(\H,\C)$ given by $G(w):=F(e^{2\pi iw}),\ w\in\H$.
Similarly to the case ${a>0}$, it suffices to show that $G$ is a real constant. Consider $g(w):={w+G(w)}$. This function is a univalent holomorphic self-map of~$\UH$ with $g'(\infty):={\anglim_{w\to\infty}g(w)/w=1}$. Therefore, $g$ admits the following representation, see e.g. \cite[Chapter~V.4:\,(V.42),\,(V.44)\,\,(V.45)]{Bhatia},
\begin{multline*}
g(w)=r+w+\int_\Real\left(\frac1{\xi-w}-\frac{\xi}{1+\xi^2}\right)\di\mu(\xi),~w\in\UH,\quad\text{with~a constant~$r\in\Real~$ and }\\ \text{a non-negative Borel measure~$\mu$ on~$\Real$ given by~}\,
       \mu([a,b])=\lim_{\eta\to0^+}\frac1\pi\int_a^b\Im g(\xi+i\eta)\di\xi
\end{multline*}
for any $a,b\in\Real$, $a<b$, possibly except for a countably many points at which $\mu$ has atoms. Taking into account that by~\eqref{EQ_from-Im-part3}, $\lim_{\eta\to0^+}\Im g(\xi+i\eta)=0$ for a.e.~${\xi\in\Real}$, it remains to see that $\Im g$ is bounded in~$\UH\setminus\UH_1$, where for ${b\in\Real}$ we denote by $\UH_b$ the half-plane ${\{w\colon \Im w>b\}}$.

Note that ${g'(\infty)\neq0}$ and hence $g$ is conformal at~$\infty$ with ${g(\infty)=\infty}$. Recall that $g$ is univalent in~$\UH$. Then by a standard argument, see e.g. \cite[p.\,303--304]{Pombook75}, we get that $g(\UH_1)\supset U_R:=\{w:\arg w\in(\pi/2,3\pi/2), |w|>R\}$ for some~${R>0}$. Let $R_0:=\max\{1,R\}$. Since ${g(w+1)=g(w)+1}$ for all ${w\in\UH}$, it actually follows that ${\UH_{R_0}}\subset{\bigcup_{k\in\mathbb Z}U_{R_0}+k}\subset g(\UH_1)$. Therefore, ${g\big(\UH\setminus\UH_1\big)}={g(\UH)\setminus g(\UH_1)}\subset{\UH\setminus\UH_{R_0}}$, and we are done.
\end{proof}

\section{Petals and commutativity II.  Proof~of Theorems \ref{petalEST} and \ref{Cor:petals}}\label{S_petalsII}
Before we start proving  Theorem~\ref{petalEST}, let us analyze (in the remark below) the case $(\psi_t)\subset\Aut$, which in the statement of the theorem is excluded from consideration. As it has been already mentioned in Section~\ref{SS_one-param-semigr}, in this case, we can (and do) extend $(\psi_t)$ to a group by setting $\psi_t:=\big(\psi_{-t}\big)^{-1}$ for all ${t<0}$.

\begin{remark}\label{RM_group}
Suppose that $(\psi_t)\subset\Aut$ is a continuous one-parameter group in~$\UD$ with $\psi_1\neq\id_\UD$, and let $\varphi\in\U(\UD)$ be a non-elliptic self-map commuting with~$\psi_1$. Clearly, $(\psi_t)$ cannon be elliptic. The whole unit disc $\UD$ is the unique petal of~$(\psi_t)$, which is hyperbolic or parabolic depending on whether $(\psi_t)$ is a hyperbolic or a parabolic group.
Moreover, if $(\psi_t)$ is hyperbolic, then by \cite[Lemma~2.1]{Heins}, $\varphi$ is a hyperbolic automorphism having the same fixed points as~$(\psi_t)$, and hence ${\varphi=\psi_{t_0}}$ for some ${t_0\in\Real}$. If $(\psi_t)$ is parabolic, then $\varphi$ is not necessarily an automorphism of~$\UD$, but it has to be a parabolic self-map with the same Denjoy\,--\,Wolff point as~$\psi_1$; see, e.g., \cite[Proposition~2.6.11]{Abate2}. In this case, $\varphi\in\Aut$ if and only if ${\varphi=\psi_{t_0}}$ for some~${t_0\in\Real}$.
\end{remark}

\begin{proof}[\proofof{Theorem \ref{petalEST}}] First of all, since $\psi_1$ is not a hyperbolic automorphism (otherwise, we would have ${(\psi_t)\subset\Aut}$), by \cite[Lemma~2.1]{Heins}, $\varphi$ cannot be a hyperbolic automorphism either. Therefore, by Behan's Theorem, see \cite{Behan} or \cite[Theorem~4.10.3]{Abate2}, the Denjoy\,--\,Wolff point of~$\varphi$ coincides with the Denjoy\,--\,Wolff point~$\tau$ of the semigroup~$(\psi_t)$. In particular, it follows that $(\psi_t)$ is non-elliptic.

\StepP{\ref{IT_petalEST-hyperP}} By Lemma~\ref{LM_petal-into-petal}, there exists a petal $\Delta^\prime$ of $(\psi_t)$ such that $\varphi(\Delta)\subset\Delta^\prime$.
Being connected components of $\mathcal W^\circ$, petals are pairwise disjoint. Therefore, either ${\varphi(\Delta)\subset\Delta}$, or ${\varphi(\Delta)\cap\Delta=\emptyset}$. In the former case, by Corollary~\ref{CR_petals-iff-points}, ${\varphi(\sigma)=\sigma}$ in the sense of angular limits. Moreover, taking into account that ${\sigma\neq\tau}$, by Theorem~\ref{TH_FixP} we have that in this case, ${\varphi=\psi_{t_0}}$ for some ${t_0>0}$ and as a consequence $\varphi(\Delta)=\psi_{t_0}(\Delta)=\Delta$. Thus, if $\varphi(\Delta)\subset\Delta$, then alternative~\ref{IT_petalEST-hyperP(i)} in Theorem~\ref{petalEST}\,\ref{IT_petalEST-hyperP} holds.

Now suppose that $\varphi(\Delta)\cap\Delta=\emptyset$. If $\Delta'$ is a hyperbolic petal of~$(\psi_t)$, then according to Lemma~\ref{LM_correspondence of BRFPs}, alternative~\ref{IT_petalEST-hyperP(ii)} in Theorem~\ref{petalEST}\,\ref{IT_petalEST-hyperP} holds. If $\Delta'$ is a parabolic petal of~$(\psi_t)$, then again by Lemma~\ref{LM_correspondence of BRFPs}, ${\varphi(\sigma)=\tau}$ in the sense of angular limits. In this case, taking into account that ${\sigma\neq\tau}$ and that  $\varphi'(\tau)\neq\infty$, by \cite[Lemma~8.2]{Cowen-Pommerenke}, the angular derivative $\varphi'(\sigma)$ is~$\infty$, and thus, alternative~\ref{IT_petalEST-hyperP(iii)} holds. This completes the proof of Theorem~\ref{petalEST}\,\ref{IT_petalEST-hyperP}.

\StepP{\ref{IT_petalEST-paraP}} By  hypothesis, $\Delta$ is a parabolic petal for~$(\psi_t)$. Hence, statement~\ref{IT_petalEST-paraP(a)} is just Lemma~\ref{LM_parabolic-petal-into-itself}. Since one of the implications in statement~\ref{IT_petalEST-paraP(b)} is clear, see Section~\ref{petals}, it remains to prove that if ${\varphi(\Delta)=\Delta}$, then $\varphi$ is contained in~$(\psi_t)$. To this end, consider the canonical holomorphic model ${(S,H,(z\mapsto z+t)_{t\geq 0})}$ for the semigroup $(\psi_t)$. It is known, see e.g. \cite[Theorem~13.5.7]{BCD-Book}, that $\Pi:=H(\Delta)$ is a half-plane of the form ${\Pi=\{z:\Im z>a\}}$ or ${\Pi=\{z:\Im z<a\}}$. Note that ${(S,H,z\mapsto z+1)}$ is the canonical model for~$\psi_1$. Recall also that ${\varphi\in\Zen(\psi_1)}$. Therefore, according to \cite[Theorem~3.4]{CDG-Centralizer}, $\varphi={H^{-1}\circ g\circ H}$ for some ${g\in\U(S)}$ satisfying
\begin{equation}\label{EQ_comm-for-g}
  g(z+1)=g(z)+1 \quad\text{~for all $~{z\in S}$.}
\end{equation}
If $\varphi(\Delta)=\Delta$, then ${g(\Pi)=\Pi}$ and hence, taking into account~\eqref{EQ_comm-for-g}, we conclude that ${g(z)=z+c}$ for all ${z\in\Pi}$ and some constant~${c\in\Real}$. Clearly, this equality hold for all~${z\in S}$. Note that ${g\big(H(\UD)\big)}={H\big(\varphi(\UD)\big)}\subset{H(\UD)}$. Since $\psi_1$ is not an automorphism of~$\UD$, by \cite[Theorem~3.1\,(B)]{CDG-Centralizer}, we have ${c\ge0}$, and of course, ${c\neq0}$ because ${\varphi\neq\id_\UD}$. Therefore, ${\varphi=\psi_{t_0}}$ with ${t_0:=c>0}$.
\end{proof}

\begin{proof}[\proofof{Theorem \ref{Cor:petals}}] Let $(S,H,(z\mapsto z+t)_{t\geq 0})$ stand for the canonical holomorphic model of~$(\psi_t)$. As in the proof of Theorem~\ref{petalEST}, we see that~$\varphi$ is not a hyperbolic automorphism.

\StepP{\ref{IT_CorNew:if-hyperbolic-petal}}
The semigroup $(\psi_t)$ cannot be elliptic, because ${\psi_1\neq\id_\UD}$ commutes with the non-elliptic self-map~$\varphi$. Furthermore, $(\psi_t)$ cannot be hyperbolic because otherwise having ${\varphi\in\Zen(\psi_1)}$ and ${\psi_1\not\in\Aut}$  would imply that ${\varphi=\psi_{t_0}}$  for some ${t_0\ge0}$, see e.g. \cite[Remark~6.4]{CDG-Centralizer}; but this directly contradicts the hypothesis. Therefore, $(\psi_t)$ is parabolic, and by \cite[Corollary~4.1]{Cowen-comm} so is~$\varphi$. Moreover, by Lemma~\ref{LM-only-one-parabolic-petal}, $(\psi_t)$ can have at most one parabolic petal.

\StepP{\ref{IT_CorNew:petals-finite-seq}} Let $\Delta_*$ be the unique parabolic petal of~$(\psi_t)$.  It is known, see e.g. \cite[Theorem~13.5.7]{BCD-Book}, that $\Pi:=H(\Delta_*)$ is a half-plane of the form ${\Pi=\{z:\Im z>a\}}$ or ${\Pi=\{z:\Im z<a\}}$. Below we give a proof for the former alternative. The other case can be treated in a similar way.

 If $(\psi_t)$ is parabolic of zero hyperbolic step, then by \cite[Proposition~4.3]{CDG-Centralizer}, we have ${H\circ\varphi\circ H^{-1}}={(w\mapsto w+c)}$, where ${c\not\in[0,+\infty)}$ because $\varphi$ is not an element of~$(\psi_t)$. Note also that by \cite[Theorem~3.1\,(B)]{CDG-Centralizer}, ${c\not\in(-\infty,0)}$ because $\psi_1$ is not an automorphism. Moreover, ${\varphi(\Delta_*)\subset\Delta_*}$ by Lemma~\ref{LM_parabolic-petal-into-itself} and hence, ${\Pi+c\subset\Pi}$. As a consequence, ${\Im c>0}$. Since for each hyperbolic petal~$\Delta$, its image $H(\Delta)$ is a horizontal strip, see e.g. \cite[Theorem~13.5.5\,(2)]{BCD-Book}, it follows that there exists ${n\in\Natural}$ such that ${H(\Delta)+nc\subset\Pi}$ and, as a result, we have ${\varphi^{\circ n}\big(\Delta\big)\subset\Delta_*}$. Choose the smallest of such~$n$'s. To complete the proof of assertion~\ref{IT_CorNew:petals-finite-seq} for the case of zero hyperbolic case, it now suffices to appeal to Lemmas~\ref{LM_petal-into-petal} and~\ref{LM_correspondence of BRFPs}.

  Suppose $(\psi_t)$ is of positive hyperbolic step. We may suppose that~${S=\UH}$. The argument for the case ${S=-\UH}$ is similar. By \cite[Theorem~3.4]{CDG-Centralizer}, there exists $g\in\U(\UH)$ satisfying ${H\circ\varphi}={g\circ H}$ and such that ${g(w+1)=g(w)+1}$ for any ${w\in\UH}$.
 As in the previous case, $g$ cannot be of the form ${w\mapsto w+c}$ with $c\in\Real$. Therefore, see e.g. \cite[Lemma~9.3\,(II)]{CDG-Centralizer},  ${\exp\big(2\pi i g(\zeta)\big)}={f(e^{2\pi i \zeta})}$, where $f\in\Hol(\UD)$, ${f(0)=0}$, ${|f'(0)|<1}$. Since the iterates of such a self-map $f:\UD\to\UD$ converge to~$0$, see e.g. \cite[Theorem~3.1.13]{Abate2}, it follows that for any ${w\in\UH}$,  $\Im g^{\circ n}(w)\to+\infty$ as ${n\to+\infty}$. Therefore, any point~$z\in\UD$,
 $$
   \varphi^{\circ n}(z)=H^{-1}\big(g^{\circ n}(H(z))\big)\in H^{-1}(\Pi)=\Delta_*
 $$
 for $n\in\Natural$ large enough. As a consequence, for any hyperbolic petal~$\Delta$ there exists ${n\in\Natural}$ such that ${\varphi^{\circ n}(\Delta)\cap\Delta_*\neq\emptyset}$. Relying on the fact that petals are pairwise disjoint and applying Lemma~\ref{LM_petal-into-petal} with~$\varphi^{\circ n}$ on place of~$\varphi$, we therefore conclude that ${\varphi^{\circ n}(\Delta)\subset\Delta_*}$. Now we can complete the proof of~\ref{IT_CorNew:petals-finite-seq} in the same manner as in the previous case: pass to the smallest ${n\in\Natural}$ satisfying ${\varphi^{\circ n}(\Delta)\subset\Delta_*}$ and appeal to Lemmas~\ref{LM_petal-into-petal} and~\ref{LM_correspondence of BRFPs}.

\StepP{\ref{IT_CorNew:petals-inf-seq}} Suppose that $(\psi_t)$ has no parabolic petal and let~$\Delta$ be some hyperbolic petal of~$(\psi_t)$. Applying inductively Lemmas~\ref{LM_petal-into-petal}
and~\ref{LM_correspondence of BRFPs}, we conclude that there exists a sequence of hyperbolic petals $(\Delta_n)$ with $\alpha$-points~$\sigma_n$ such that ${\Delta_1=\Delta}$, ${\varphi(\Delta_n)\subset \Delta_{n+1}}$ and ${\varphi(\sigma_n)=\sigma_{n+1}}$ for all ${n\in\Natural}$. In order to see that $\Delta_n$'s are pairwise disjoint\footnote{An alternative way to prove this fact is to combine \cite[Proposition~5.2]{Filippo-puntosTAMS} with Lemma~\ref{LM_correspondence of BRFPs}.}, we suppose on the contrary that ${\Delta_{m+k}=\Delta_m}$ for some ${m,k\in\Natural}$. Then ${\varphi^{\circ k}(\Delta_m)\subset\Delta_m}$, and hence by Theorem~\ref{petalEST}\,\ref{IT_petalEST-hyperP} applied to $\Delta$ and $\varphi$ replaced by $\Delta_m$ and $\varphi^{\circ k}$ we would have that ${\varphi^{\circ k}=\psi_{t_0}}$ for some~${t_0>0}$. By Lemma~\ref{LM_uniqueness}, this would further imply that $\varphi$ itself is contained in~$(\psi_t)$, which contradicts the hypothesis of the theorem.
To complete the proof, it remains to notice that according to \cite[Theorem~1.4]{Filippo-puntosTAMS}, ${\psi_1'(\sigma_n)\le\psi_1'(\sigma_1)}$ for all ${n\in\Natural}$ and, as a consequence, the Cowen\,--\,Pommerenke inequality \cite[Theorem~4.1\,(iii)]{Cowen-Pommerenke}
$$
  \sum_{n=1}^{+\infty}\frac{|\tau-\sigma_n|^2}{\psi_1'(\sigma_n)-1}~\le 2\Re\Big(\tfrac{1}{\psi_1(0)}-1\Big)~<~+\,\infty
$$
implies that $|\tau-\sigma_n|\to0$ as ${n\to+\infty}$.
\end{proof}

\section{Petals and isogonality}\label{PetalosISO}
Given ${\varphi\in\U(\D)}$ non-elliptic  and ${(\psi_t)\not\subset\Aut}$ a continuous one-parameter semigroup in the disc such that $\psi_1\in\Zen(\varphi)\setminus\{\id_\UD\}$, in Theorem~\ref{petalEST}, it was shown that $\varphi$ maps petals of $(\psi_t)$ into petals of $(\psi_t)$. In fact, if such a petal $\Delta$ is parabolic, we always have ${\varphi(\Delta)\subset\Delta}$. However, this is no longer the case when $\Delta$ is hyperbolic. Apart from the same possibility, i.e. apart from the case ${\varphi(\Delta)\subset\Delta}$, and in general, we can have the other two following situations:
\begin{itemize}
	\item[]Situation A:
      {\slshape $\varphi(\Delta)\subset\Delta^\prime$ and $\Delta^\prime$ is a hyperbolic petal.}
	\item[]Situation B:
     {\slshape $\varphi(\Delta)\subset\Delta^\prime$ and $\Delta^\prime$ is a parabolic petal.}
\end{itemize}
In the following two results, we analyse when $\varphi$ maps $\Delta$ \textit{onto} $\Delta^\prime$ in both situations.

\begin{proposition}\label{PR_isogonal1}
	Assume we are in {\rm Situation A} and let $\sigma$ and $\sigma^\prime$ be the $\alpha$-points of  $\Delta$ and $\Delta^\prime$, respectively. Then the following conditions are equivalent:
\begin{equilist}
  \item\label{IT_isogonal-onto} $\varphi(\Delta)=\Delta'$;
  \item\label{IT_isogonal-isogonal} $\varphi$ is isogonal at~$\sigma$;
  \item\label{IT_isogonal-equal-spec-values} the spectral values of $(\psi_t)$ at $\sigma$ and $\sigma'$ coincide.
\end{equilist}
Moreover, if the above equivalent conditions hold, then $\varphi$ is affine with respect to $\psi_1$ and hence, ${(\psi_t)\subset\Zen(\varphi)}$.
\end{proposition}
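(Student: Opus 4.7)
The plan is to work in the canonical holomorphic model $(S,H,(z\mapsto z+t)_{t\ge0})$ of $(\psi_t)$ and transfer the proposition to a statement about the induced map $g:=H\circ\varphi\circ H^{-1}$. Since $\psi_1\in\Zen(\varphi)\setminus\{\id_\UD\}$, by \cite[Theorem~3.4]{CDG-Centralizer} this map extends from $H(\UD)$ to a univalent self-map of~$S$ satisfying $g(w+1)=g(w)+1$ and $g(H(\UD))\subset H(\UD)$. Under~$H$, the hyperbolic petals $\Delta,\Delta'$ correspond to maximal horizontal strips $\Pi=\R\times(a,b)$ and $\Pi'=\R\times(a',b')$ of widths $\ell=\pi/\alpha$ and $\ell'=\pi/\alpha'$, where $\alpha,\alpha'$ are the spectral values of $(\psi_t)$ at $\sigma,\sigma'$; the $\alpha$-points themselves correspond to the ends $\Re w\to-\infty$ of the respective strips. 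Situation~A reads $g(\Pi)\subset\Pi'$, and condition~(a) becomes $g(\Pi)=\Pi'$. Crucially, the $\Z$-periodicity of $g$ allows us to descend $g$ to a univalent map $\tilde g$ between the cylinders $\Pi/\Z$ and $\Pi'/\Z$, i.e.\ between planar annuli of conformal moduli $\ell$ and $\ell'$.

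For (a)$\Leftrightarrow$(c) I will use conformal moduli. The implication (a)$\Rightarrow$(c) is immediate: a biholomorphism between two annuli forces equality of their moduli, so $\ell=\ell'$, equivalently $\alpha=\alpha'$. For the converse, assuming $\alpha=\alpha'$, the subdomain $\tilde g(\Pi/\Z)$ of $\Pi'/\Z$ is doubly connected with modulus equal to that of the ambient annulus $\Pi'/\Z$. Moreover, by Lemma~\ref{LM_correspondence of BRFPs}, the end $\Re w\to-\infty$ of $\Pi$ is sent by $\varphi$ to the $\alpha$-point $\sigma'$ of $\Delta'$, so the generator of $\pi_1(\Pi/\Z)$ is carried by $\tilde g$ to the generator of $\pi_1(\Pi'/\Z)$; in particular, $\tilde g(\Pi/\Z)$ separates the two boundary components of $\Pi'/\Z$. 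The classical monotonicity of conformal modulus for nested doubly connected domains with homotopic cores then forces $\tilde g(\Pi/\Z)=\Pi'/\Z$, whence $g(\Pi)=\Pi'$.

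For the ``moreover'' clause, once (a) is established, the biholomorphism $\tilde g\colon\Pi/\Z\to\Pi'/\Z$ between equi-modular annuli must, in suitable uniformizing coordinates, take one of the forms $z\mapsto\mu z$ or $z\mapsto\mu/z$. The second alternative lifts to $g|_\Pi(w)=-w+c$, which violates $g(w+1)=g(w)+1$; hence $\tilde g$ corresponds to $g|_\Pi(w)=w+c$ for some constant $c\in\C$. The identity principle on the connected Riemann surface $S$ extends this equality to $g\equiv w+c$ on all of $S$, i.e.\ $\varphi=H^{-1}\circ(H+c)$ is affine with respect to $\psi_1$, and Proposition~\ref{PR_simple} then yields $(\psi_t)\subset\Zen(\varphi)$.

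It remains to relate (b). The direction (b)$\Rightarrow$(c) will follow from the Julia\,--\,Carath\'eodory chain rule applied at $\sigma$ to $\varphi\circ\psi_1=\psi_1\circ\varphi$: isogonality yields a finite nonzero angular derivative $\varphi'(\sigma)$, which cancels in the identity $\varphi'(\sigma)\psi_1'(\sigma)=\psi_1'(\sigma')\varphi'(\sigma)$ to produce $\psi_1'(\sigma)=\psi_1'(\sigma')$, that is~(c). For (a)$\Rightarrow$(b), once affineness $\varphi=H^{-1}\circ(H+c)$ has been obtained, a direct asymptotic analysis of $H$ and $H^{-1}$ at the $\alpha$-points $\sigma$ and $\sigma'$ (using the standard logarithmic behaviour of the Koenigs function at a BRFP with finite spectral value, cf.\ \cite[Chapter~13]{BCD-Book}) shows that $(\varphi(z)-\sigma')/(z-\sigma)$ admits a finite nonzero non-tangential limit at~$\sigma$, giving isogonality. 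The main subtlety I anticipate is the homotopy-preservation step in~(c)$\Rightarrow$(a), namely verifying that $\tilde g$ does not collapse the fundamental group generator---this is exactly where Lemma~\ref{LM_correspondence of BRFPs} plays a decisive role, and without it the modulus monotonicity cannot be invoked in its sharp form.
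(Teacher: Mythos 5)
Your treatment of \ref{IT_isogonal-onto}$\Leftrightarrow$\ref{IT_isogonal-equal-spec-values} and of the ``moreover'' clause is a legitimate alternative to the paper's argument: passing to the quotient cylinders $\Pi/\Z\to\Pi'/\Z$ and invoking the equality case of modulus monotonicity (the essentiality of the embedding already follows from the equivariance $g(w+1)=g(w)+1$, so Lemma~\ref{LM_correspondence of BRFPs} is not really what is needed there), whereas the paper gets \ref{IT_isogonal-onto}$\Rightarrow$\ref{IT_isogonal-equal-spec-values} from the pre-model intertwiner $T$ being an automorphism of $\Hr$ and \ref{IT_isogonal-equal-spec-values}$\Rightarrow$\ref{IT_isogonal-onto} from Heins' lemma applied to $g-ia$ on a strip; these are essentially equivalent rigidity statements, and your lifting argument for $g|_\Pi(w)=w+c$ plus the identity principle and Proposition~\ref{PR_simple} correctly yields the affineness conclusion.

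The genuine gap is in \ref{IT_isogonal-isogonal}$\Rightarrow$\ref{IT_isogonal-equal-spec-values}. You assert that ``isogonality yields a finite nonzero angular derivative $\varphi'(\sigma)$'' and then cancel it in a boundary chain rule. Isogonality (semi-conformality) is only a condition on the \emph{argument} of $\bigl(1-\overline{\sigma'}\varphi(z)\bigr)/(1-\overline{\sigma}z)$; it does not control the modulus of this quotient, and in general an isogonal map may have infinite (or zero) angular derivative at the point --- conformality at a boundary point is strictly stronger than isogonality. In the setting of the proposition, $\sigma$ is a priori only a contact point of $\varphi$, possibly irregular (compare alternative~\ref{IT_petalEST-hyperP(iii)} of Theorem~\ref{petalEST}, where isogonality fails but regularity fails too), so you cannot assume $\varphi'(\sigma)\in(0,+\infty)$ without proving it; and proving it from hypothesis \ref{IT_isogonal-isogonal} alone is essentially as hard as the implication itself. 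This is exactly the difficulty the paper's Lemma~\ref{LM_isogonal} is designed for: isogonality of the map $T=\widetilde\Psi^{-1}\circ\varphi\circ\Psi$ at $0$ (obtained via the composition Lemma~\ref{LM_isog-compositions}, which also secures $T(0)=0$) gives the Visser--Ostrowski-type relation $T(\beta x)/T(x)\to\beta$, which combined with the intertwining identity $T\bigl(\psi_1'(\sigma)w\bigr)=\psi_1'(\sigma')T(w)$ forces $\psi_1'(\sigma)=\psi_1'(\sigma')$ \emph{without} any finiteness assumption on the angular derivative. As written, your chain-rule shortcut does not close the equivalence cycle, since \ref{IT_isogonal-isogonal}$\Rightarrow$\ref{IT_isogonal-equal-spec-values} is the only entry point from condition~\ref{IT_isogonal-isogonal} in your scheme; repairing it requires an argument of the Lemma~\ref{LM_isogonal} type rather than the Julia--Carath\'eodory chain rule. (Your sketch of \ref{IT_isogonal-onto}$\Rightarrow$\ref{IT_isogonal-isogonal} via the logarithmic asymptotics of $H$ at the two repelling points is plausible but would also need those asymptotics quoted in a precise angular-limit form.)
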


\begin{proposition}\label{PR_isogonal2}
	In {\,\rm Situation B} we always have $\varphi(\Delta)\neq\Delta^\prime$.
\end{proposition}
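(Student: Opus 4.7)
The plan is a short argument by contradiction. Suppose that $\varphi(\Delta) = \Delta'$. Since $\varphi$ is univalent on~$\UD$, the restriction $\varphi|_\Delta$ is injective holomorphic, and by assumption also surjective onto~$\Delta'$; hence $\varphi|_\Delta \colon \Delta \to \Delta'$ is a biholomorphism. The commutation $\varphi \circ \psi_1 = \psi_1 \circ \varphi$, combined with the fact (recalled in Section~\ref{petals}) that $\psi_1$ restricts to an automorphism of each petal, yields
$$
\varphi|_\Delta \circ \psi_1|_\Delta \,=\, \psi_1|_{\Delta'} \circ \varphi|_\Delta,
$$
so $\varphi|_\Delta$ conjugates $\psi_1|_\Delta$ to $\psi_1|_{\Delta'}$ through a biholomorphism between simply connected domains.

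Next I would transfer everything to~$\UD$. Fix Riemann maps $g\colon\UD\to\Delta$ and $g'\colon\UD\to\Delta'$, and define
$$
\widehat\psi_t := g^{-1}\circ \psi_t|_\Delta\circ g,\qquad \widehat\psi'_t := (g')^{-1}\circ \psi_t|_{\Delta'}\circ g',\qquad t\ge 0.
$$
These are continuous one-parameter groups in~$\Aut$. By the description of petals recalled in Section~\ref{petals}, backward orbits inside~$\Delta$ converge to the $\alpha$-point~$\sigma$ while forward orbits converge to~$\tau$, and $\sigma\neq\tau$ since~$\Delta$ is hyperbolic; under~$g$ these two endpoints become two distinct boundary fixed points of~$(\widehat\psi_t)$, making it a hyperbolic group. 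The analogous analysis for $\Delta'$, whose $\alpha$-point coincides with~$\tau$, shows that $(\widehat\psi'_t)$ is a parabolic group. Since $\psi_1\neq\id_\UD$ and $\Delta$ is open, the identity principle gives $\psi_1|_\Delta\neq\id_\Delta$, so $\widehat\psi_1$ is a nontrivial element of a hyperbolic one-parameter group, hence a hyperbolic automorphism of~$\UD$; analogously, $\widehat\psi'_1$ is a parabolic automorphism of~$\UD$.

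Finally, the composition $f := (g')^{-1}\circ\varphi|_\Delta\circ g$ belongs to~$\Aut$ and, by the intertwining relation above, satisfies $f\circ \widehat\psi_1 = \widehat\psi'_1\circ f$. Thus $\widehat\psi_1$ and $\widehat\psi'_1$ would be conjugate inside~$\Aut$, which is impossible because hyperbolic and parabolic automorphisms of the disc belong to different conjugacy classes of~$\Aut$ (they have, respectively, two and one fixed points on~$\overline\UD$). This contradiction establishes that $\varphi(\Delta)\neq\Delta'$. The only slightly delicate step is the identification of the transported groups $(\widehat\psi_t)$ and $(\widehat\psi'_t)$ as hyperbolic and parabolic, respectively, but this is a direct translation of the standard boundary behaviour of orbits inside hyperbolic and parabolic petals.
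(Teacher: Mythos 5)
Your proof is correct and follows essentially the same route as the paper: the paper also conjugates the restricted group on the hyperbolic petal to a hyperbolic group and on the parabolic petal to a parabolic group (via pre-models in $\Hr$) and gets the contradiction from the impossibility of an automorphism intertwining the two types, only it concludes by an explicit M\"obius computation at the fixed points $0$ and $\infty$ rather than by invoking conjugacy classes in $\Aut$. The one step you treat informally --- that the transported groups $(\widehat\psi_t)$ and $(\widehat\psi'_t)$ are hyperbolic resp.\ parabolic --- is exactly the content of the results the paper quotes (\cite[Theorems 13.2.7, 13.4.12 and Proposition 13.4.10]{BCD-Book}), so it is covered, although your landing-point argument as written glosses over the prime-end/boundary-correspondence subtlety for the Riemann maps $g$ and $g'$.
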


The proof of these two propositions require some backgrounds which we expose thereafter. We begin by recall the definition of isogonality, which we extend to the case of infinite angular limit.
\begin{definition}
A univalent function $f:\UD\to\C$ is said to be \textsl{isogonal} or \textsl{semi-conformal} at a point~${\zeta\in\UC}$ if the following two angular limits exist:
\begin{equation*}
f(\zeta):=\anglim_{z\to\zeta} f(z)\in\Complex\cup\{\infty\} \quad\text{and}\quad\anglim_{z\to\zeta}\arg\frac{f_0(z)}{z-\zeta},
\end{equation*}
where $\arg$ is to be understood as a continuous map from $\C\setminus\{0\}$ to ${\Real/(2\pi\mathbb Z)}$ and
$$
 f_0(z):=\begin{cases}
 f(z)-f(\zeta)& \text{if~$f(\zeta)\neq\infty$},\\
 1/f(z)       & \text{if~$f(\zeta)=\infty$.}
 \end{cases}
$$

Further, a univalent function $\Psi:\Hr\to\C$ is said to be isogonal at $\omega\in\partial\Hr$ if the composition $f:=\Psi\circ H_\omega$, where  $H_\omega$  is the Cayley map of $\UD$ onto~$\Hr$ with ${H_\omega(0)=0}$, ${H_\omega(1)=\omega}$, is isogonal at ${\zeta=1}$.
\end{definition}

\begin{remark}\label{RM_iso_UD}
If $f\in\U(\UD)$ and $\zeta$ is a contact point of~$f$, then an elementary argument shows that the isogonality condition requires that the image of the radial segment $[0,\zeta]$ under~$f$ approaches the point~${f(\zeta)\in\UC}$ orthogonally to~$\UC$. Therefore, in this case, the isogonality at~$\zeta$ is equivalent to
\begin{equation}\label{EQ_iso_UD}
\anglim_{z\to\zeta}\Arg\frac{1-\overline{f(\zeta)}f(z)}{1-\overline\zeta z}=0,
\end{equation}
where $\Arg w$ stands for the unique value of the argument of~$w\neq0$ contained in~${(-\pi,\pi]}$.
Passing this remark to the right half-plane with the help of the Cayley map, we can say that a univalent self-map ${T\in \U(\Hr)}$ with a boundary fixed point at~$0$ is isogonal at~$0$ if and only if
\begin{equation}\label{EQ_iso_UH}
\anglim_{w\to 0}\Arg\frac{T(w)}{w}=0.
\end{equation}
\end{remark}
\begin{definition}\label{DF_premodel}
Let $\sigma\in\UC$ be a repelling fixed point of a continuous one-parameter semigroup $(\psi_t)$ with associated infinitesimal generator~$G$. The triple $(\Hr,\Psi,Q_t)$ is called a pre-model for $(\psi_t)$ at~$\sigma$ if the following conditions are met:
\begin{itemize}
\item[(i)] for each $t\ge0$, $Q_t$ is the automorphism of~$\Hr$ given by $Q_t(z):=e^{\lambda t}z$, where $\lambda:=G'(\sigma)$;
\item[(ii)] the map $\Psi:\Hr\to\UD$ is holomorphic and injective, $\anglim_{w\to0}\Psi(w)=\sigma$, and $\psi$ is isogonal at~$0$, i.e.
    \begin{equation}\label{EQ_isogonality}
      \anglim_{w\to0}\Arg\frac{1-\overline{\sigma}\Psi(w)}w=0;
    \end{equation}
\item[(iii)] $\Psi\circ Q_t=\psi_t\circ\Psi$ for all~$t\ge0$.
\end{itemize}
\end{definition}

\begin{remark}\label{RM_pre-model}
It is known \cite[Theorem~3.10]{Bracci_et_al2019} that every continuous one-parameter semigroup, at each repelling fixed point~$\sigma$, admits a pre-model unique up to the transformation $\Psi(w) \mapsto \Psi(cw)$, where $c$ is an arbitrary positive constant. Moreover, $\Psi(\Hr)$ coincides with the hyperbolic petal $\Delta(\sigma)$ associated with $\sigma$.
The map~$\Psi$ can be expressed via the Koenigs function~$h$ of~$(\psi_t)$. Namely, if the strip~$h(\Delta(\sigma))$ is $\mathbb S(a,b)={\{w:a<\Im w<b\}}$, then the intertwining map $\Psi$ in the pre-model for~$(\psi_t)$ at~$\sigma$ is given by
$$
\Psi(w):=h^{-1}\big(\tfrac{b-a}{2\pi}\log w+\tfrac{b+a}{2}i+s\big),\quad w\in\Hr,
$$
where $s$ is an arbitrary real constant.
\end{remark}

As the following result shows, it is also possible to talk about pre-models associated with parabolic petals.
\begin{theorem} \cite[Proposition 13.4.10 and its proof]{BCD-Book} Let $(\psi_t)$ be a parabolic semigroup in the unit disc with Denjoy\,--\,Wolff point $\tau\in\partial\D$ and let $\Delta$ be a parabolic petal of $(\psi_t)$. Then, there exist $\Psi\in\U(\Hr,\D)$ with $\Psi(\Hr)=\Delta$ and $\angle \lim_{w\to 0}\Psi(w)=\tau$ and a parabolic group $(Q_t)$ in $\Hr$ with Denjoy\,--\,Wolff point at~$0$ such that, for  all $t\geq 0$,
$$
\Psi\circ Q_t=\psi_t\circ\Psi.
$$	
\end{theorem}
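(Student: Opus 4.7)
This theorem is cited from \cite[Proposition~13.4.10]{BCD-Book}. My plan is to reconstruct the standard argument in three stages: choose a Riemann map of $\Hr$ onto $\Delta$; identify the conjugated group as parabolic with Denjoy\,--\,Wolff point at a boundary point, and normalize; and finally verify the angular limit at~$0$.

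First, I would invoke the basic properties of parabolic petals recalled in Section~\ref{petals}: $\Delta$ is simply connected, $\psi_t(\Delta)=\Delta$ for every $t\ge0$, and the restrictions $(\psi_t|_\Delta)$ form a continuous one-parameter group of conformal automorphisms of~$\Delta$. Pick any Riemann map $\eta:\Hr\to\Delta$ and set $\widetilde Q_t:=\eta^{-1}\circ\psi_t\circ\eta$; this is a continuous one-parameter group of M\"obius automorphisms of~$\Hr$.

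Next, I would identify the type of~$(\widetilde Q_t)$. Because $\Delta$ is a parabolic petal, the $\alpha$-point coincides with the Denjoy\,--\,Wolff point~$\tau$, so both forward and backward orbits of $(\psi_t|_\Delta)$ in~$\Delta$ accumulate at the single boundary point~$\tau$. Transporting through $\eta$, both forward and backward orbits of $(\widetilde Q_t)$ in $\Hr$ accumulate at a single point $\omega\in\partial\Hr\cup\{\infty\}$. This rules out the elliptic case (orbits would be bounded hyperbolic circles) and the hyperbolic case (forward and backward orbits would be attracted to two distinct boundary fixed points); hence $(\widetilde Q_t)$ must be parabolic with Denjoy\,--\,Wolff point at~$\omega$. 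Post-composing $\eta$ with an automorphism of~$\Hr$ sending $\omega$ to~$0$, I obtain the desired $\Psi\in\U(\Hr,\UD)$ with $\Psi(\Hr)=\Delta$, and $(Q_t):=\Psi^{-1}\circ\psi_t\circ\Psi$ is a parabolic group in $\Hr$ with Denjoy\,--\,Wolff point~$0$, satisfying $\Psi\circ Q_t=\psi_t\circ\Psi$ by construction.

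For the boundary condition $\anglim_{w\to 0}\Psi(w)=\tau$, I would use the explicit description of $\Psi$ via the Koenigs function~$h$ of~$(\psi_t)$. By \cite[Theorem~13.5.7]{BCD-Book}, $h(\Delta)$ is a half-plane $\{z:\Im z>a\}$ or $\{z:\Im z<a\}$, on which $h\circ\psi_t\circ h^{-1}$ acts as the translation $w\mapsto w+t$. Therefore $\Psi=h^{-1}\circ T$, where $T:\Hr\to h(\Delta)$ is the M\"obius map conjugating $(Q_t)$ to the translation group; explicitly, $T(w)=c-1/(\lambda w)$ for suitable real constants $c$ and $\lambda\neq 0$. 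As $w\to0$ non-tangentially in~$\Hr$, the point $T(w)$ escapes to infinity inside $h(\Delta)$ in a controlled way, and the known boundary behavior of $h^{-1}$ at the prime end of $h(\UD)$ corresponding to~$\tau$ yields the required angular limit.

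The main obstacle is precisely this last step: guaranteeing that the convergence $\Psi(w)\to\tau$ is genuinely \emph{angular} at~$0$, and not merely along orbits of~$(Q_t)$. The cleanest route is through the explicit formula for $\Psi$ just indicated, combined with the fine boundary behavior of Koenigs functions of parabolic semigroups near the Denjoy\,--\,Wolff point (in particular, unrestricted or angular convergence within parabolic petals), as developed in \cite[Chapter~13]{BCD-Book}.
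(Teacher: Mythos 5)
This statement is quoted in the paper directly from \cite[Proposition~13.4.10]{BCD-Book}, so there is no internal proof to compare with; judging your reconstruction on its own terms, it has two gaps. The first is the step where you identify the conjugated group as parabolic. From the hypothesis you only know that the forward and backward orbits of $(\psi_t|_\Delta)$ land at the single point $\tau\in\partial\UD$; transporting this back through $\eta$ to conclude that the orbits of $(\widetilde Q_t)$ in $\Hr$ accumulate at a single point of $\partial\Hr$ is not legitimate, because $\eta$ relates $\partial\Hr$ to \emph{prime ends} of $\Delta$, and the two distinct fixed points of a hypothetical hyperbolic group correspond to two distinct prime ends which could a priori land at the same point $\tau$ of $\partial\UD$. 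Ruling this out is essentially equivalent to the strip/half-plane dichotomy for $h(\Delta)$, i.e.\ it is part of what must be proved, not a consequence of the orbit limits in $\UD$. The clean repair is the one implicit in the second half of your own plan: by \cite[Theorem~13.5.7]{BCD-Book}, $h(\Delta)$ is a half-plane bounded by a horizontal line on which $h\circ\psi_t\circ h^{-1}$ is the translation $z\mapsto z+t$, which is a \emph{parabolic} automorphism group of that half-plane with Denjoy\,--\,Wolff point~$\infty$; choosing a M\"obius map $T:\Hr\to h(\Delta)$ with $T(0)=\infty$ and setting $\Psi:=(h|_\Delta)^{-1}\circ T$, the group $Q_t:=T^{-1}\circ(z\mapsto z+t)\circ T$ is automatically parabolic in $\Hr$ with Denjoy\,--\,Wolff point~$0$, and the intertwining relation holds by construction.

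The second gap is the one you flag yourself: the angular limit $\anglim_{w\to 0}\Psi(w)=\tau$ is never actually proved, and the route you sketch (boundary behaviour of $h^{-1}$ at infinity inside the half-plane, in vertical sectors) would require a genuine additional result that you do not supply. There is, however, a short self-contained argument that closes this: fix $w_0\in\Hr$ and consider the orbit curve $\gamma(t):=Q_t(w_0)$, $t\ge0$ (or the backward orbit $Q_{-t}(w_0)$, using that $(Q_t)$ extends to a group of automorphisms of~$\Hr$). Since $(Q_t)$ is a parabolic group with Denjoy\,--\,Wolff point~$0$, $\gamma(t)\to0$ as $t\to+\infty$, and $\Psi(\gamma(t))=\psi_t(\Psi(w_0))\to\tau$ because $\tau$ is the Denjoy\,--\,Wolff point (for the backward orbit one uses instead that backward orbits in a parabolic petal tend to its $\alpha$-point, which is~$\tau$). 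As $\Psi$ is a bounded holomorphic function on $\Hr$ with asymptotic value $\tau$ along a curve ending at~$0$, the Lindel\"of theorem gives $\anglim_{w\to0}\Psi(w)=\tau$. With these two repairs your outline becomes a complete proof; as written, both the classification of the group and the angular limit are asserted rather than established.
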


\begin{definition}
In the notation of the above theorem,  the triple $(\Hr,\Psi,Q_t)$ is called a \textsl{pre-model} for $(\psi_t)$ associated with the parabolic petal $\Delta$.
\end{definition}

\begin{lemma}\label{LM_isogonal}
Let $T$ be a univalent self-map of~$\Hr$. If $\,T(0)=0\,$ in the sense of angular limits and if $T$ is isogonal at~$0$, then for any ${\beta>0}$,
\begin{equation}\label{EQ_LM_isogonal}
\lim_{x\to0^+}\frac{T(\beta x)}{T(x)}~=~\beta.
\end{equation}
\end{lemma}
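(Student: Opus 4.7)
The plan is to recast \eqref{EQ_LM_isogonal} as the locally uniform convergence $T_x \to \mathrm{id}$ on $\Hr$ of the rescaled maps $T_x(w) := T(xw)/T(x)$ as $x \to 0^+$; evaluating at $w = \beta$ then gives the claimed limit. I will prove this convergence by a normal families argument combined with the isogonality hypothesis.

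First I verify that $\{T_x\}_{x > 0}$ is a normal family on $\Hr$. Each $T_x$ is univalent on $\Hr$ and satisfies $T_x(1) = 1$. Since $z \mapsto z/x$ is a hyperbolic automorphism of $\Hr$ for every $x > 0$, the Schwarz--Pick inequality applied to $T$ gives
$$
\rho_{\Hr}\bigl(T(xw),\,T(x)\bigr) \,\le\, \rho_{\Hr}(xw,x) \,=\, \rho_{\Hr}(w,1),
$$
and the right-hand side is bounded on compact subsets of $\Hr$. Since the inequality $\rho_{\Hr}(p,q) \le R$ implies two-sided bounds on $|p/q|$ depending only on $R$, this yields uniform bounds on $|T_x(w)|$ for $w$ in compact subsets of $\Hr$, uniformly in $x > 0$. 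Montel's theorem then gives normality of the family $\{T_x\}_{x>0}$.

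Next I identify all subsequential limits of $T_x$ as $x \to 0^+$. Fix $y \in \Hr$; as $x \to 0^+$, the point $xy$ tends to $0$ non-tangentially along the ray $\arg w = \arg y \in (-\pi/2,\pi/2)$. By isogonality applied both to the positive-real approach and to the approach along this ray, $\arg T(x)/x \to 0$ and $\arg T(xy)/(xy) \to 0$, whence
$$
\arg T_x(y) \,=\, \arg T(xy) - \arg T(x) \,=\, \arg y \,+\, \arg\tfrac{T(xy)}{xy} \,-\, \arg\tfrac{T(x)}{x} \,\longrightarrow\, \arg y.
$$
Any subsequential limit $\tilde T$ therefore satisfies $\tilde T(1)=1$ and $\arg \tilde T(y) = \arg y$ for every $y \in \Hr$; in particular $\tilde T$ is non-constant (a constant map cannot satisfy $\arg\tilde T(y) = \arg y$ for all $y$) and so is univalent by Hurwitz's theorem. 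The identity $\arg\tilde T(y) = \arg y$ then forces $\tilde T(y)/y \in (0,+\infty)$ for all $y \in \Hr$, and a holomorphic function on $\Hr$ with values in $(0,+\infty)$ is constant. The normalization $\tilde T(1)=1$ gives $\tilde T(y) = y$.

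Since every subsequential limit is the identity, normality forces $T_x \to \mathrm{id}$ locally uniformly on $\Hr$, proving \eqref{EQ_LM_isogonal}. The main obstacle is that isogonality directly controls only the argument of $T(w)/w$ and does not by itself pin down the modulus ratio $|T(\beta x)|/|T(x)|$; in particular, the case of infinite angular derivative $T'(0)$ cannot be excluded a priori. The normal family device bypasses this difficulty by working with all rescalings simultaneously, where the argument information, combined with the rigidity of holomorphic functions expressed by the open mapping theorem, suffices to identify the limit uniquely.
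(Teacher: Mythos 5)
Your proof is correct, but it follows a genuinely different route from the paper. The paper converts isogonality at the boundary fixed point into the Visser--Ostrowski condition (via \cite[Proposition~4.11 on p.\,81]{Pommerenke:BB}), which yields $\lim_{x\to0^+} xT'(x)/T(x)=1$, and then obtains \eqref{EQ_LM_isogonal} by writing $\log\big(T(\beta x)/T(x)\big)=\int_1^\beta \frac{tx\,T'(tx)}{T(tx)}\,\frac{\di t}{t}$ and passing to the limit. You instead run a rescaling (blow-up) argument: Schwarz--Pick applied to the dilations $w\mapsto xw$ gives two-sided bounds on $|T(xw)/T(x)|$ on compacta (note these bounds also keep the rescaled maps away from zero, which is what legitimizes taking arguments of the subsequential limit), Montel gives normality, the isogonality condition \eqref{EQ_iso_UH} along rays pins down $\arg\tilde T(y)=\arg y$ for every limit function, and the open mapping theorem then forces $\tilde T=\id_{\Hr}$, so $T(xw)/T(x)\to w$ locally uniformly. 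The trade-off: the paper's argument is shorter given the cited Visser--Ostrowski result and delivers the stronger infinitesimal statement $xT'(x)/T(x)\to1$; yours is self-contained, uses univalence of $T$ only inessentially (Schwarz--Pick needs only a holomorphic self-map, and the Hurwitz step is superfluous), and yields the stronger locally uniform convergence of the rescalings, which is in the spirit of the alternative proof without global univalence that the paper mentions in its footnote.
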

\begin{proof}
Since $T$ is isogonal at~$0$ with ${T(0)=0}$, the univalent function $T\big(\tfrac{1-z}{1+z}\big)$, ${z\in\UD}$, satisfies at ${\zeta=1}$ the Visser\,--\,Ostrowski condition; see e.g. \cite[Proposition~4.11 on p.\,81]{Pommerenke:BB}. It follows\footnote{In~\cite{Wuerzburg} one can find another proof of this fact (not assuming global univalence of~$T$), see \cite[Lemma~8.1]{Wuerzburg} applied to~${z\mapsto1/T(1/z)}$, as well some closely related results connecting asymptotic behaviour of the hyperbolic derivative to isogonality and to existence of finite angular derivative.} that
 \begin{equation*}
\lim_{x\to0^+}\frac{x T'(x)}{T(x)}~=~1.
 \end{equation*}
As a consequence,
$$
\log\frac{T(\beta x)}{T(x)}=\int\limits_1^\beta\,\frac{tx\, T'(tx)}{T(tx)}\,\frac{\di t}{t}~\longrightarrow~\int\limits_1^\beta\,\frac{\di t}{t}~=~\log\beta
$$
as $x\to0^+$, as desired.
\end{proof}

   Recall that a sequence $(z_n)\subset\UD$ converging to some point ${\sigma\in\UC}$ is said to converge to $\sigma$ \hbox{\textsl{non-tangentially}} if the limit set ${\mathrm{Slope}[(z_n),n\to+\infty]}$ of ${\Arg(1-\overline\sigma z_n)}$ as ${n\to+\infty}$ is compactly contained in the open interval $(-\pi/2,\pi/2)$. Further, we say that $(z_n)$ converges to~$\sigma$ \textsl{tangentially}, if ${\mathrm{Slope}[(z_n),n\to+\infty]}\subset{\{-\pi/2,\pi/2\}}$. Clearly, $(z_n)$ converges to~$\sigma$ non-tangentially if and only if it has no subsequence converging tangentially.

\begin{lemma}\label{LM_isogonal-tangential}
Suppose $f\in\U(\UD)$ is isogonal at some point ${\zeta\in\UC}$ and that ${f(\zeta)\in\UC}$. If ${(z_n)\subset\UD}$ converges to~$\zeta$ tangentially, and if $(w_n)$, given by ${w_n:=f(z_n)}$ for all ${n\in\Natural}$,  converges to $f(\zeta)$, then the convergence of $(w_n)$ is also tangential.
\end{lemma}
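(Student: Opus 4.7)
The plan is to transport the problem to the right half-plane $\Hr$. Choose Cayley transforms $C_1,C_2:\UD\to\Hr$ with $C_1(\zeta)=C_2(f(\zeta))=0$, and set $T:=C_2\circ f\circ C_1^{-1}$. Then $T\in\U(\Hr)$, $\anglim_{w\to 0}T(w)=0$, and by hypothesis $T$ is isogonal at $0$, i.e. $\Arg(T(w)/w)\to0$ as $w\to 0$ non-tangentially in $\Hr$, in accordance with~\eqref{EQ_iso_UH}. Since the Cayley transforms preserve tangential and non-tangential convergence, the lemma reduces to the following: if $(w_n)\subset\Hr$ satisfies $w_n\to0$ with $\Arg w_n\to\pi/2$ (WLOG, after passing to a subsequence) and $T(w_n)\to 0$, then $\Arg T(w_n)\to\pi/2$.

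I would argue by contradiction: assume that, along some subsequence, $\Arg T(w_n)$ stays in a compact subset of $(-\pi/2,\pi/2)$, so that $T(w_n)$ approaches $0$ non-tangentially. Writing $w_n=r_n e^{i\theta_n}$ with $r_n:=|w_n|\to0^+$ and $\theta_n\to\pi/2$, the positive real sequence $(r_n)$ is non-tangential at $0$, so isogonality of $T$ gives $\Arg T(r_n)\to 0$. The crux is to establish
$$
  \lim_{n\to\infty}\frac{T(w_n)}{T(r_n)}\,=\,i,
$$
which, combined with $\Arg T(r_n)\to 0$, forces $\Arg T(w_n)\to\pi/2$, contradicting our assumption.

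To prove the above limit, I would extend the integration argument from Lemma~\ref{LM_isogonal}. Parametrizing $\gamma_n(\alpha):=r_n e^{i\alpha}$ for $\alpha\in[0,\theta_n]$, the chain rule gives
$$
  \log\frac{T(w_n)}{T(r_n)}\,=\,\int_0^{\theta_n}\frac{\gamma_n(\alpha)\,T'(\gamma_n(\alpha))}{T(\gamma_n(\alpha))}\,i\,\di\alpha.
$$
By the Visser-Ostrowski condition (equivalent to isogonality; see the proof of Lemma~\ref{LM_isogonal}), the integrand tends to $i$ pointwise for every $\alpha$ in any compact subinterval of $[0,\pi/2)$, so the integral over $[0,\pi/2-\delta]$ tends to $i(\pi/2-\delta)$ for every $\delta>0$. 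The main obstacle is to control the contribution of $\int_{\pi/2-\delta}^{\theta_n}$, where $\gamma_n(\alpha)$ approaches $0$ tangentially and Visser-Ostrowski is no longer directly applicable.

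I plan to overcome this obstacle via the auxiliary normal family $F_r(z):=T(rz)/T(r)$, $r>0$, consisting of univalent maps from $\Hr$ into a rotated copy of~$\Hr$. By Lemma~\ref{LM_isogonal}, $F_r(\beta)\to\beta$ for every $\beta\in(0,+\infty)$, and a standard Hurwitz/Montel argument then promotes this to locally uniform convergence $F_r\to\mathrm{id}_\Hr$ on $\Hr$ as $r\to 0^+$. Combining this with Koebe distortion for the univalent $F_r$ on hyperbolic disks together with the hyperbolic contraction of $T$ allows to extend the convergence to test points on the unit semicircle approaching $\pm i\in\partial\Hr$; this supplies the missing control on the tail of the integral and closes the argument.
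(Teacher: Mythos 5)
The reduction to $\Hr$ and the normal-family statement that $F_r:=T(r\,\cdot)/T(r)\to\id_{\Hr}$ locally uniformly as $r\to0^+$ are fine, but the step on which your whole argument hinges --- transferring this convergence to the test points $e^{i\theta_n}$ with $\theta_n\to\pi/2$, so as to control the tail $\int_{\pi/2-\delta}^{\theta_n}$ (equivalently, to get $F_{r_n}(e^{i\theta_n})=T(w_n)/T(r_n)\to i$) --- is a genuine gap, and the tools you invoke cannot close it. The points $e^{i\theta_n}$ escape every compact subset of~$\Hr$: their hyperbolic distance to any fixed reference point, say~$1$, tends to~$+\infty$, so Koebe distortion on hyperbolic disks produces multiplicative errors of order $e^{C\rho_{\Hr}(1,\,e^{i\theta_n})}$ which blow up, while the Schwarz--Pick inequality only says that hyperbolic distances are not increased, i.e. it gives the upper bound $\rho\big(F_{r_n}(e^{i\theta_n}),F_{r_n}(1)\big)\le\rho_{\Hr}(e^{i\theta_n},1)\to+\infty$, which localizes $F_{r_n}(e^{i\theta_n})$ not at all. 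Locally uniform convergence of univalent maps in the open half-plane simply carries no information along sequences tending to the boundary, and that is exactly the regime you need.

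Note moreover that your crux claim $T(w_n)/T(r_n)\to i$ is strictly stronger than the lemma itself: besides $\Arg T(w_n)\to\pi/2$ it asserts $|T(w_n)|/|T(r_n)|\to1$, i.e. a Visser--Ostrowski-type regularity holding \emph{unrestrictedly} rather than in Stolz angles; isogonality alone only controls Stolz-angle behaviour, so this extra strength would have to come from univalence, but in your proposal univalence enters only through normality and distortion, which, as explained, does not reach tangential directions. The paper's proof uses univalence in a different and much softer way: isogonality implies that the image of a Stolz sector $A_{\theta,r}$ contains a Stolz sector $A_{\eta,\rho}$ with $\eta$ as close to $\pi/2$ as desired (see \cite[Proposition~4.10]{Pommerenke:BB}), and injectivity then forces the points $w_n=f(z_n)$, being images of points lying outside $A_{\theta,r}$, to avoid $A_{\eta,\rho}$; letting $\eta\to\pi/2$ yields tangentiality, with no modulus asymptotics needed. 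To salvage your route you would need a quantitative boundary version of the rescaling convergence, and the natural way to obtain it is precisely this covering-plus-injectivity argument.
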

\begin{proof}
Composing $f$ with suitable conformal mappings we may replace $\UD$ with~$\Hr$ and suppose that ${f(\zeta)=\zeta=0}$.
Fix some ${\eta\in(0,\pi/2)}$ and set $\theta:=\tfrac12(\eta+\pi/2)$. Since $(z_n)\subset\Hr$ converges to~$0$ tangentially,  there exists ${r>0}$ such that $A_{\theta,r}:=\{z:|\Arg z|<\theta,~|z|<r\}$ does not contain any point of~$(z_n)$. The isogonality of~$f$ at~$0$ implies, see e.g. \cite[Proposition~4.10 on p.\,81]{Pommerenke:BB}, that there exists ${\rho>0}$ such that $A_{\eta,\rho}\subset {f\big(A_{\theta,r}\big)}$. Since $f$ is univalent by hypothesis, it follows that $A_{\eta,\rho}$ does not contain any point of~$(w_n)$. Taking into account that ${\eta\in(0,\pi/2)}$ in this argument is arbitrary, we conclude that if $(w_n)$ converges to~$0$, then the convergence must be tangential.
\end{proof}

\begin{lemma}\label{LM_isog-compositions}
Let $D_1,D_2,D_3\in\{\UD,\Hr\}$ and let $f_3:=f_2\circ f_1$, where ${f_1:D_1\to D_2}$ and ${f_2:D_2\to D_3}$ are two (holomorphic) univalent functions. The following two statements hold.
\begin{statlist}
  \item\label{IT_iso-comp-f2-and-f1}
  If $f_1$ is isogonal at some $\zeta_1\in\partial D_1$ with $\zeta_2:=f_1(\zeta_1)\in\partial D_2$ and if $f_2$ is isogonal at~$\zeta_2$ with $\zeta_3:=f_2(\zeta_2)\in\partial D_3$, then $f_3$ is isogonal at~$\zeta_1$ with $f_3(\zeta_1)=\zeta_3$.
  \smallskip
  \item\label{IT_iso-comp-f3} Conversely, if $f_3$ is isogonal at some $\zeta_1\in\partial D_1$ and if $\zeta_3:={f_3(\zeta_1)\in\partial D_3}$, then: {\rm (i)}~$f_1$ is isogonal at~$\zeta_1$, {\rm (ii)}~$\zeta_2:=f_1(\zeta_1)$ belongs to~$\partial D_2$, {\rm (iii)}~$f_2$ is isogonal at~$\zeta_2$, and {\rm (iv)}~${f_2(\zeta_2)=\zeta_3}$.
\end{statlist}
\end{lemma}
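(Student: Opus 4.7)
Both directions of the lemma hinge on the chain-rule identity
\begin{equation*}
\Arg\frac{f_3(z)-\zeta_3}{z-\zeta_1} ~\equiv~ \Arg\frac{f_1(z)-\zeta_2}{z-\zeta_1} ~+~ \Arg\frac{f_2(f_1(z))-\zeta_3}{f_1(z)-\zeta_2} \pmod{2\pi},
\end{equation*}
valid whenever all three quotients are defined and non-zero, together with the elementary observation that isogonality preserves non-tangential approach between boundary points: if $f$ is isogonal at $\zeta\in\partial D$ with boundary value $f(\zeta)\in\partial D'$, then $\arg(f(z)-f(\zeta))=\arg(z-\zeta)+\textrm{const}+o(1)$ along any non-tangential sequence $z\to\zeta$, and hence $\arg(z-\zeta)$ staying bounded away from the tangential directions at~$\zeta$ forces $\arg(f(z)-f(\zeta))$ to stay bounded away from the tangential directions at~$f(\zeta)$.

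Part \ref{IT_iso-comp-f2-and-f1} is immediate from these two ingredients. Along any non-tangential sequence $z_n\to\zeta_1$, the isogonality of $f_1$ at~$\zeta_1$ yields an angular limit for the first summand on the right of the displayed identity and forces $f_1(z_n)\to\zeta_2$ non-tangentially in~$D_2$; the isogonality of $f_2$ at~$\zeta_2$ then yields an angular limit for the second summand, and summing gives isogonality of $f_3$ at~$\zeta_1$ with the required boundary value~$\zeta_3$ (the latter being automatic from continuity along angular limits).

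For part \ref{IT_iso-comp-f3}, I would first reduce by Möbius changes of variable to the case $D_1=D_2=D_3=\UD$, so that all three functions are bounded. The core step is to show that $\zeta_2:=\anglim_{z\to\zeta_1}f_1(z)$ exists and lies in~$\partial D_2$. Two observations come into play here: every cluster value of $f_1$ at~$\zeta_1$ along non-tangential sequences must lie in~$\partial D_2$, since an interior cluster value would give, by continuity of~$f_2$, an interior cluster value for $f_3=f_2\circ f_1$, contradicting the angular limit $\zeta_3\in\partial D_3$; and uniqueness of this cluster value can be extracted by writing $f_1=f_2^{-1}\circ f_3$ and appealing to Carath\'eodory's prime-end theorem for the univalent map $f_2\colon D_2\to f_2(D_2)$, using that the isogonality of~$f_3$ singles out a unique prime end of $f_2(D_2)$ at~$\zeta_3$ along which $f_3(z)$ accesses~$\zeta_3$. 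Once $\zeta_2$ is in place, $f_2(\zeta_2)=\zeta_3$ follows by continuity, and a refinement of the same prime-end analysis shows that $f_2$ is isogonal at~$\zeta_2$; isogonality of $f_1$ at~$\zeta_1$ then follows by subtraction in the chain-rule identity.

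The main obstacle is precisely this core step: extracting a unique non-tangential limit for~$f_1$ from the angular limit of~$f_3$. Mere convergence $f_3(z)\to\zeta_3$ is insufficient, because a bounded univalent function can have very complicated cluster behaviour at a boundary point; it is the additional asymptotic-direction information encoded in the isogonality of~$f_3$ that pins down a single prime end of~$f_2(D_2)$ and, through $f_2^{-1}$, the boundary point~$\zeta_2\in\partial D_2$, thereby unlocking the chain-rule splitting that delivers both conclusions \textrm{(i)--(iv)}.
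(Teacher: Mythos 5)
Your part~\ref{IT_iso-comp-f2-and-f1} is fine and matches the paper's argument, and in part~\ref{IT_iso-comp-f3} your prime-end route to the existence of the angular limit $\zeta_2$ of $f_1$ at~$\zeta_1$ is a legitimate substitute for the paper's use of the No-Koebe-Arc Theorem and the Lindel\"of Theorem (though ``$f_2(\zeta_2)=\zeta_3$ by continuity'' should be: $\zeta_3$ is an asymptotic value of $f_2$ along the curve $f_1(C)$ ending at $\zeta_2$, hence the angular limit by Lindel\"of; and ``a refinement of the same prime-end analysis shows $f_2$ is isogonal at $\zeta_2$'' is asserted without substance --- the paper gets it from Ostrowski's geometric characterization of isogonality, applied to the curve $C_1=f_1\circ C$ together with the inclusion $f_3(\UD)\subset f_2(\UD)$).

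The genuine gap is your final step, ``isogonality of $f_1$ at $\zeta_1$ then follows by subtraction in the chain-rule identity.'' To subtract, you must know that along a non-tangential sequence $z_n\to\zeta_1$ the second summand $\Arg\bigl(\bigl(f_2(f_1(z_n))-\zeta_3\bigr)/\bigl(f_1(z_n)-\zeta_2\bigr)\bigr)$ converges to the limit provided by the isogonality of $f_2$ at~$\zeta_2$; but isogonality controls this quotient only along sequences approaching $\zeta_2$ \emph{non-tangentially}, and nothing in your argument excludes that $f_1(z_n)$ approaches $\zeta_2$ tangentially. The ``elementary observation'' you invoke (an isogonal map sends non-tangential approach to non-tangential approach) is the wrong direction: it says nothing about where tangential sequences are sent, so it cannot rule this out, and a priori the two summands could oscillate in compensating ways while the left-hand side converges. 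The paper closes exactly this hole with Lemma~\ref{LM_isogonal-tangential}: since $f_2$ is \emph{univalent} and already known to be isogonal at~$\zeta_2$, it maps tangentially convergent sequences to tangentially convergent ones; as the isogonality of $f_3$ forces $f_3(z_n)=f_2(f_1(z_n))\to\zeta_3$ non-tangentially, tangential approach of $f_1(z_n)$ to $\zeta_2$ is excluded, and only then can the isogonality of $f_2$ be applied to reach the contradiction between the two limit angles $\theta_1\neq\theta_2$. Note that univalence of $f_2$ is essential in that lemma (via the sector-covering property coming from isogonality), so this missing ingredient is not a routine remark but a substantive step your proposal omits.
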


\begin{proof} First of all, without loss of generality, we may suppose that ${D_1=D_2=D_3=\UD}$.

\StepP{\ref{IT_iso-comp-f2-and-f1}}
 Taking into account Remark~\ref{RM_iso_UD}, we have to show that for any sequence ${(z_n)\subset\UD}$ converging to~$\zeta_1$ non-tangentially, $f_3(z_n)\to\zeta_3$ and $\Arg\big((1-\overline{\zeta_3}f_3(z_n))/(1-\overline{\zeta_1}z_n)\big)\to0$. This follows readily from the definition of isogonality and Remark~\ref{RM_iso_UD} applied for ${f:=f_1}$ and for ${f:=f_2}$, if we additionally notice that by~\eqref{EQ_iso_UD} with~${f:=f_1}$ and ${\zeta:=\zeta_1}$, the sequence formed by the points $w_n:=f_1(z_n)$, ${n\in\Natural}$, converges to~$\zeta_2$ non-tangentially.

\StepP{\ref{IT_iso-comp-f3}} Recalling that $f_3(\D)\subset\D$, the necessary and sufficient geometric condition for isogonality  due to Ostrowski, see e.g. \cite[Theorem~11.6 on p.\,254]{Pommerenke:BB}, applied to~$f_3$ at $\zeta_1$ takes the following form: there exist ${t_1>0}$ and a curve ${C:[0,t_1]\to\UD\cup\{\zeta_1\}}$ satisfying the following three conditions:
\begin{equilist}
  \item $C(0)=\zeta_1$ and $C(t)\in\UD$ for all ${t\in(0,t_1]}$;
  \item $f_3\big(C(t)\big)=w_0+\mu t~$ for some ${w_0\in\C}$, some ${\mu\in\UC}$, and all ${t\in(0,t_1]}$;
  \item for any $\varepsilon>0$, there exists $\rho=\rho(\varepsilon)>0$ such that
      $$
        \big\{w:\,|w-w_0|<\rho,~\Re\big(\overline\mu(w-w_0)\big)>\varepsilon^2|w-w_0|\big\}\subset f_3(\UD).
      $$
\end{equilist}
In particular, $w_0$ is an asymptotic value of~$f_3$ at~$\zeta_1$. By the Lindel\"of Theorem, see e.g. \cite[Theorem~9.3 on p.\,268]{Pombook75}, it follows that $w_0$ coincides with the angular limit of~$f_3$ at~$\zeta_1$, i.e. ${w_0=\zeta_3}$.

Applying the No-Koebe-Arc Theorem to the function~$f_2$, it is not difficult to show, see e.g.~\cite[Corollary~9.2 on p.\,267]{Pombook75}, that $C_1(t):=f_1\big(C(t)\big)$ has a limit~${\zeta_2\in\UC}$ as ${t\to0^+}$. Moreover, ${f_2(z)\to w_0=\zeta_3}$ as ${\Gamma\ni z\to\zeta_2}$, where $\Gamma:=C_1\big((0,t_1)\big)$. Again by the Lindel\"of Theorem,  it follows that, for $k=1,2$, the function $f_k$ has angular limit at~$\zeta_k$ equal to~$\zeta_{k+1}$.

Taking into account that $w_0=\zeta_3\in\UC$ and that $f_3(\UD)\subset f_2(\UD)\subset\UD$ and applying again the above characterization of isogonality but now to~$f_2$ at $\zeta_2$ with $C$ replaced by~$C_1$, we see that $f_2$ is isogonal at~$\zeta_2$.

It remains to see that $f_1$ is isogonal at~$\zeta_1$. Suppose the contrary. Then, according to Remark~\ref{RM_iso_UD}, there exists a sequence ${(z_n)\subset\UD}$ converging to~$\zeta_1$ non-tangentially such that
$$
\Arg\big(1-\overline{\zeta_1} z_n\big)\to\theta_1\quad\text{and}\quad
 \Arg\big(1-\overline{\zeta_2} f_1(z_n)\big)\to\theta_2\quad \text{as~$~n\to+\infty$}
$$
for some $\theta_{1}\in(-\pi/2,\pi/2)$ and some $\theta_2\in[-\pi/2,\pi/2]$, with ${\theta_2\neq\theta_1}$.
Since $f_3$ is isogonal at~$\zeta_1$ by  hypothesis, we have that $f_3(z_n)={f_2\big(f_1(z_n)\big)}$ converges to~$\zeta_3$ non-tangentially. Therefore, applying Lemma~\ref{LM_isogonal-tangential} with ${f:=f_2}$ and with~$z_n$ replaced by~${\tilde z_n:=f_1(z_n)}$, we see that $(\tilde z_n)$ cannot converge to~$\zeta_2$ tangentially. Hence, $\theta_2\in{(-\pi/2,\pi/2)}$ and as a consequence, we can use the isogonality of~$f_2$ at~$\zeta_2$ to see that $\Arg\big(1-\overline{\zeta_3} f_2\big(f_1(z_n)\big)\big)\to\theta_2$ as~$n\to+\infty$. This however contradicts the fact that by the isogonality of~$f_3$ at~$\zeta_1$,
$$
 \Arg\big(1-\overline{\zeta_3} f_2\big(f_1(z_n)\big)\big)~=~\Arg\big(1-\overline{\zeta_3} f_3(z_n)\big)~\to~\theta_1\neq\theta_2\quad \text{as~$~n\to+\infty$.} \eqno\qedhere
$$
\end{proof}

\begin{proof}[\proofof{Proposition~\ref{PR_isogonal1}}] Let ${(\UH,\Psi,Q_t)}$ and ${(\UH,\widetilde\Psi,\widetilde Q_t)}$ be pre-models associated with the petals $\Delta$ and $\Delta'$, respectively. Further, denote ${\Psi_1=\varphi\circ \Psi}$. Then we have
$$
\psi_1\circ\Psi_1 = \varphi \circ \psi_1 \circ \Psi =\varphi \circ \Psi\circ Q_1 = \Psi_1\circ Q_1.
$$
It follows that $\psi_1|_{\varphi(\Delta)}={\Psi_1\circ Q_1\circ \Psi_1^{-1}}$. At the same time, $\psi_1|_{\Delta'}={\widetilde\Psi\circ \widetilde Q_1\circ \widetilde\Psi^{-1}}$. Therefore, the three univalent self-maps of $\UH$: ${Q_1(w)=\psi_1'(\sigma)w}$, ${\widetilde Q_1(w)=\psi_1'(\sigma')w}$ for all ${w\in\UH}$, and $T:={\widetilde\Psi^{-1}\circ\Psi_1}$,  satisfy the identity
\begin{equation}\label{EQ_identity-for-T}
   T\circ Q_1 \,=\, \widetilde Q_1\circ T.
\end{equation}

\StepG{Step 1: \ref{IT_isogonal-onto} implies \ref{IT_isogonal-isogonal} and \ref{IT_isogonal-equal-spec-values}.} If $\varphi(\Delta)=\Delta'$, then ${\Psi_1(\UH)=\Delta'}$ and hence,  $T$ is an automorphism of~$\Hr$. Since conjugation does not change the multipliers of the fixed points and since the numbers $\psi_1'(\sigma)$ and $\psi_1'(\sigma')$ are both greater than~$1$, identity~\eqref{EQ_identity-for-T} implies in this case that~${T(0)=0}$, ${T(\infty)=\infty}$, and ${\psi_1'(\sigma)=\psi_1'(\sigma')}$. The latter equality means that~\ref{IT_isogonal-equal-spec-values} holds. The former two equalities imply that for a suitable constant ${\alpha>0}$, $T(w)=\alpha w$ and hence
\begin{equation}\label{EQ_varphi-Psi-Psi}
\varphi\big(\Psi(w)\big)=\widetilde\Psi(\alpha w)\quad\text{for all~$w\in\UH$.}
\end{equation}
To prove~\ref{IT_isogonal-isogonal}, i.e. isogonality of~$\varphi$ at~$b$, it is now sufficient to apply Lemma~\ref{LM_isog-compositions}\,\ref{IT_iso-comp-f3} for ${f_1:=\Psi}$, ${f_2:=\varphi}$, and ${f_3(w):=\widetilde\Psi(\alpha w)}$, ${w\in\Hr}$.

\StepG{Step~2: \ref{IT_isogonal-onto} implies that $\varphi$ is affine with respect to $\psi_1$.} Let $H$ stand for the Koenigs function of $(\psi_t)$. Then ${S:=H(\Delta)}$ and ${S':=H(\Delta')}$ are two horizontal strips (see \cite[Theorem~13.5.5]{BCD-Book}). Write $g:={H\circ \varphi\circ H^{-1}}$. Since ${\varphi\circ\psi_1}={\psi_1\circ\varphi}$, we have ${g(w+1)}={g(w)+1}$ for all ${w\in\Delta}$. Moreover, by the hypothesis, ${\varphi(\Delta)=\Delta'}$ and hence ${g(S)=S'}$. Taking into account that $g$ is univalent in~$S$, it follows that $g$ is an affine map, i.e. that $\varphi$ is affine w.r.t.~$\psi_1$.

\StepG{Step~3: \ref{IT_isogonal-isogonal} implies \ref{IT_isogonal-equal-spec-values}.} By the hypothesis, $\varphi$ is isogonal at~$\sigma$. Then, by Lemma~\ref{LM_isog-compositions}\,\ref{IT_iso-comp-f2-and-f1}, $\Psi_1$ is isogonal at~$0$, with $\Psi_1(0)=\varphi(\sigma)=\sigma'\in\partial\UD$. Since $\widetilde\Psi\circ T=\Psi_1$, by Lemma~\ref{LM_isog-compositions}\,\ref{IT_iso-comp-f3}, the function $T$ is isogonal at~$0$ and $\widetilde\Psi$ is isogonal at ${T(0)\in\partial\Hr}$, with $\widetilde\Psi(T(0))=\Psi_1(0)=\sigma'$. By the definition of a pre-model,  $\widetilde\Psi$ is also isogonal at~$0$ and $\widetilde\Psi=\sigma'$. Since by the same definition $\widetilde\Psi$ is univalent, it follows that ${T(0)=0}$.

Now, combining Lemma~\ref{LM_isogonal} and identity~\eqref{EQ_identity-for-T}, we see that ${\psi_1'(\sigma)=\psi_1'(\sigma')}$, i.e.~\ref{IT_isogonal-equal-spec-values} holds.

\StepG{Step~4: \ref{IT_isogonal-equal-spec-values} implies \ref{IT_isogonal-onto}.}  We use the same notation as in Step~2. Since ${\varphi(\Delta)\subset\Delta'}$, we have ${g(S)\subset S'}$. In accordance with \cite[Theorem~2.6]{Analytic-flows}, condition~\ref{IT_isogonal-equal-spec-values} implies that the strips ${S:=H(\Delta)}$ and ${S':=H(\Delta')}$ are of the same width. Therefore, for a suitable $a\in\Real$, $g_0:={g-ia}$ is a self-map of~$S$ commuting with $w\mapsto{w+1}$. The the result of Heins~\cite[Lemma~2.1]{Heins}, this means that $g_0$ maps $S$ \textit{onto} itself and hence, ${\varphi(\Delta)=\Delta'}$.
\end{proof}

\begin{proof}[\proofof{Proposition~\ref{PR_isogonal2}}]
Let ${(\UH,\Psi,Q_t)}$ be a pre-model associated with the hyperbolic petal $\Delta$. Hence, for some $\lambda>0$, we have ${Q_t(w)=e^{\lambda}w}$ for all ${w\in\Hr}$ and all ${t\geq0}$. Likewise, let ${(\UH,\widetilde\Psi,\widetilde Q_t)}$ be a pre-model associated with the parabolic petal~$\Delta^\prime$. Then $(\widetilde Q_t)$ is a parabolic group in $\Hr$ with Denjoy\,--\,Wolff point at~$0$, i.e.  $\widetilde Q_t(w)=\frac{w}{1+ictw}$ for all $w\in\Hr$,  all ${t\in\Real}$ and some constant ${c\in\R^*}$. Denote ${\Psi_1:=\varphi\circ \Psi}$. Arguing exactly as in the beginning of the proof of Proposition~\ref{PR_isogonal1}, we obtain that, for all $w\in\Delta^\prime$,
$$
{\Psi_1\circ Q_1\circ \Psi_1^{-1}}(w)={\widetilde\Psi\circ \widetilde Q_1\circ \widetilde\Psi^{-1}}(w).
$$
Now, consider $T:={\widetilde\Psi^{-1}\circ\Psi_1}$. The above identity implies that that $T\circ Q_t \,=\, \widetilde Q_t\circ T$, i.e.
$$
T\left(e^{\lambda t}w\right)=\frac{T(w)}{1+ictT(w)},\ w\in\Hr,\ t\geq0.
$$

Assume that $\varphi(\Delta)=\Delta^\prime$. Then $T$ is an automorphism of $\Hr$ thus a  linear fractional map. In particular, it follows that the above equality extends to~$\C\cup\{\infty\}$. Setting ${w:=0}$ and $w:=\infty$,
$$
T(0)=\frac{T(0)}{1+ictT(0)}\quad\text{and}\quad T(\infty)=\frac{T(\infty)}{1+ictT(\infty)}\quad\text{for all~$~t\ge0$}.
$$
It immediately follows that $T(0)=T(\infty)=0$, which is impossible because the linear-fractional map $T$ is not constant. This contradiction show that ${\varphi(\Delta)\neq\Delta'}$.
\end{proof}

\section{Proof of Theorem \ref{Th_main}}\label{S_proof-of-the-main-Theorem}
 Since $\varphi$ is non-elliptic and since $\psi_1\neq\id_\UD$ commutes with~$\varphi$, it is elementary to see that the self-map $\psi_1$, and hence the whole semigroup $(\psi_t)$ is also non-elliptic.
Let $(S,H,(z\mapsto z+t)_{t\geq0})$ stand for the canonical holomorphic model of~$(\psi_t)$.

Sufficiency of conditions~\ref{IT_main:affine}, \ref{IT_main:isogonal}, and~\ref{IT_main:boundary-fixed-pt} follows directly from Proposition~\ref{PR_simple}, Proposition~\ref{PR_isogonal1}, and Theorem~\ref{TH_FixP}, respectively.
It remains to prove the result for conditions~\ref{IT_main:hyperbolic-or-parabolic-zero}, \ref{IT_main:two-values-of-t}, and~\ref{IT_main:unrestr_limit}.

Suppose \ref{IT_main:hyperbolic-or-parabolic-zero} holds, i.e.,
$\psi_{1}$ is hyperbolic or parabolic of zero hyperbolic step. Then by \cite[Proposition~4.3]{CDG-Centralizer},  there exists a constant $c\in \C$ such that $H\circ \varphi=H+c$, and the desired conclusion that ${(\psi_t)\subset\Zen(\varphi)}$ follows immediately by Proposition~\ref{PR_simple}.

Suppose \ref{IT_main:two-values-of-t} holds. Note that $(S,H,z\mapsto z+1)$ is a holomorphic model for~$\psi_1$. By the hypothesis, ${\varphi\in\Zen(\psi_1)}$. Therefore, by \cite[Theorem~3.3]{CDG-Centralizer}, the function $g:=H^{-1}\circ\varphi\circ H$ extends holomorphically from $H(\UD)$ to the whole domain~$S$ and satisfies
\begin{equation}\label{EQ_periodic_1}
f(w+1)=f(w):=g(w)-w\quad\text{for all~$~w\in S$.}
\end{equation}
Moreover, by~\ref{IT_main:two-values-of-t}, $\psi_r\circ\varphi=\varphi\circ\psi_r$. It follows that
\begin{equation}\label{EQ_periodic_r}
f(w+r)=f(w)\qquad\text{for all~$~w\in S$.}
\end{equation}
Fix any $w_0\in S$. Then $w_0+\Real\subset S$. Recall that $r\in\Real\setminus\Q$. Hence, ${\{k+jr:k,j\in\mathbb Z\}}$ is dense in~$\Real$. By~\eqref{EQ_periodic_1} and~\eqref{EQ_periodic_r} it follows that $f$ is constant on ${w_0+\Real}$. Therefore, $f$ is constant in~$S$, i.e. $\varphi$ is affine with respect to $\psi_1$. Hence, again  by Proposition~\ref{PR_simple}, ${(\psi_t)\subset\Zen(\varphi)}$.

In the last part of the proof we may suppose that $(\psi_t)$ is parabolic of positive hyperbolic step, since otherwise, as we have seen above, the desired conclusion ${(\psi_t)\subset\Zen(\varphi)}$ holds automatically. In this situation, applying Theorem~\ref{TH_affine} with $\psi$ and~$\varphi$ replaced by $\varphi$ and~$\psi_1$, respectively, we conclude that condition~\ref{IT_main:unrestr_limit} implies that $\varphi$ is affine w.r.t.~$\psi_1$. It remains to appeal to Proposition~\ref{PR_simple} for third time and we are done.
\qed

\section{ Commuting one-parameter semigroups}\label{Sec:commutingsemigroups}
In this final section, we apply our results to the case when $\varphi $ is embeddable. Namely, assume we have two continuous one-parameter semigroups $(\varphi_{t})$ and $(\psi_{t})$ such that $\varphi_{1}\circ \psi_{1}=\psi_{1}\circ \varphi_{1}$. The question we address is whether $(\varphi_{t})$ and $(\psi_{t})$ commute, i.e. whether $\varphi_{s}\circ \psi_{t}=\psi_{t}\circ \varphi_{s}$ for all $t,s \geq 0$. As Example~\ref{Mainexample} shows, the answer is negative in general.  In fact, the semigroups provided in that example are parabolic and one of them is of zero hyperbolic step, while the other is of positive hyperbolic step. This behaviour is not a coincidence. In fact, if $\varphi_{1}$ is hyperbolic, then so is $\psi_{1}$ (see \cite[Corollary~4.1]{Cowen-comm}), and we can apply twice Theorem~\ref{Th_main}\,\ref{IT_main:hyperbolic-or-parabolic-zero} to get a positive answer to the commutativity between the two semigroup. The same happens if both $\varphi_{1}$ and $\psi_{1}$ are parabolic of zero hyperbolic step. Indeed, these two results can also be obtained from  \cite[Theorems~4 and~5(i)]{conReich}. Being negative the answer when at least one of the semigroups is parabolic of positive hyperbolic step, we could ask ourselves about some extra condition to assure commutativity. As a consequence of Theorem~\ref{TH_affine} and Theorem~\ref{Th_main}\,\ref{IT_main:affine} we have:

\begin{corollary}\label{Cor:semigroups} Let $(\varphi_{t})$ and $(\psi_{t})$ be two non-elliptic semigroups such $\varphi_{1}$ and $\psi_{1 }$ commute. Assume that there exists the unrestricted limit
\begin{equation}\label{Eq:semigroups}
\lim_{z\to \tau}\frac{\varphi_{1}(z)-z}{\psi_{1}(z)-z}.
\end{equation}
Then the semigroups $(\varphi_{t})$ and $(\psi_{t})$ commute.
\end{corollary}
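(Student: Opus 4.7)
The plan is to invoke Theorem~\ref{Th_main}\,\ref{IT_main:unrestr_limit} twice and then close the argument with a direct analysis in Koenigs coordinates. First, apply Theorem~\ref{Th_main}\,\ref{IT_main:unrestr_limit} with $\varphi:=\varphi_{1}$ and the semigroup $(\psi_{t})$: the unrestricted limit hypothesis of the corollary is precisely what is required, so we obtain $(\psi_{t})\subset\Zen(\varphi_{1})$. Via Proposition~\ref{PR_simple} this is equivalent to $H\circ\varphi_{1}=H+c$, where $H$ is the Koenigs function of $(\psi_{t})$ and $c:=\lim_{z\to\tau}(\varphi_{1}(z)-z)/(\psi_{1}(z)-z)$. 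The constant $c$ is finite (as shown via Theorem~\ref{TH_affine} inside the proof of Theorem~\ref{Th_main}\,\ref{IT_main:unrestr_limit}) and nonzero (otherwise $\varphi_{1}=\id_\UD$, contradicting the non-triviality of $(\varphi_{t})$).

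Since $c\in\C\setminus\{0\}$, the reciprocal limit $\lim_{z\to\tau}(\psi_{1}(z)-z)/(\varphi_{1}(z)-z)=1/c$ also exists unrestrictedly. A second application of Theorem~\ref{Th_main}\,\ref{IT_main:unrestr_limit}, now with $\varphi:=\psi_{1}$ and semigroup $(\varphi_{s})$, yields $(\varphi_{s})\subset\Zen(\psi_{1})$, i.e.\ $\varphi_{s}\circ\psi_{1}=\psi_{1}\circ\varphi_{s}$ for every $s\ge 0$.

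It remains to show, for each fixed $s\ge 0$, that $\varphi_{s}$ is affine with respect to $\psi_{1}$; Proposition~\ref{PR_simple} will then give $(\psi_{t})\subset\Zen(\varphi_{s})$ and thereby the desired full commutativity. When $(\psi_{t})$ is hyperbolic or parabolic of zero hyperbolic step, this affineness is immediate from $\varphi_{s}\in\Zen(\psi_{1})$ via \cite[Proposition~4.3]{CDG-Centralizer}. In the remaining case $(\psi_{t})$ is parabolic of positive hyperbolic step and so, by \cite[Corollary~4.1]{Cowen-comm}, is $(\varphi_{t})$; without loss of generality the base space of $(\psi_{t})$ is $\UH$. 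By \cite[Proposition~7.2]{CDG-Centralizer}, the commutativity of $\varphi_{s}$ and $\psi_{1}$ implies that $g_{s}:=H\circ\varphi_{s}\circ H^{-1}$ extends holomorphically to $\UH$ as $g_{s}(w)=w+F_{s}(e^{2\pi iw})$ for some $F_{s}\in\Hol(\UD,\UH\cup\R)$. Combining $\varphi_{s}\circ\varphi_{1}=\varphi_{1}\circ\varphi_{s}$ with $H\circ\varphi_{1}=H+c$ yields $g_{s}(w+c)=g_{s}(w)+c$ on $H(\UD)$, equivalently $F_{s}(\mu q)=F_{s}(q)$ with $\mu:=e^{2\pi ic}$, valid first on a non-empty open subset of $\UD$ and then, by the identity principle, on its natural domain.

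The main technical point is to deduce that $F_{s}$ is constant. If $c\in\R$, then $c=0$ is excluded (since $\varphi_{1}\neq\id_\UD$) and $c<0$ would force $H(\UD)$ to be invariant under all real translations, hence to be a horizontal strip or half-plane, contradicting that $(\psi_{t})$ is parabolic of positive hyperbolic step; therefore $c>0$, whence $\varphi_{1}=\psi_{c}$ and $(\varphi_{s})=(\psi_{cs})$, making the conclusion trivial. If $c\notin\R$, then $|\mu|\neq 1$; after possibly replacing $\mu$ by $\mu^{-1}$ one may assume $|\mu|<1$, and iterating the relation $F_{s}(\mu q)=F_{s}(q)$ on $\UD$ gives $F_{s}(q)=F_{s}(\mu^{n}q)\to F_{s}(0)$ as $n\to\infty$ by continuity of $F_{s}$ at $0$. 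Hence $F_{s}\equiv F_{s}(0)$, so $g_{s}(w)=w+F_{s}(0)$, i.e.\ $\varphi_{s}$ is affine with respect to $\psi_{1}$, completing the proof.
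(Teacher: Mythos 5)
Your two applications of Theorem~\ref{Th_main}\,\ref{IT_main:unrestr_limit}, the identification of the constant $c$ in $H\circ\varphi_1=H+c$ with the limit \eqref{Eq:semigroups} (in the parabolic case), the reduction of the problem to showing that each $\varphi_s$ is affine with respect to $\psi_1$, and the treatment of the case $\Im c\neq0$ via the relation $F_s(\mu q)=F_s(q)$ are all correct, and this is in the spirit of what the paper intends (it offers no written proof, only the citation of Theorems~\ref{TH_affine} and~\ref{Th_main}\,\ref{IT_main:affine}). The gaps are in the real-$c$ branch of your Case~B. First, the exclusion of $c<0$ is flawed: from $H(\UD)+c\subset H(\UD)$ and $H(\UD)+t\subset H(\UD)$ for $t\ge0$ one does get invariance of $H(\UD)$ under all real translations, hence that $H(\UD)$ is a horizontal strip or half-plane; a strip indeed contradicts parabolicity, but a \emph{half-plane} does not contradict positive hyperbolic step --- it occurs exactly when $(\psi_t)$ is a parabolic group, which is parabolic of positive hyperbolic step and is allowed by the corollary's hypotheses. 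So the subcase $(\psi_t)\subset\Aut$, $c<0$ is simply not treated (note that the paper's standard way of getting $c\ge0$, namely \cite[Theorem~3.1\,(B)]{CDG-Centralizer}, also requires $\psi_1\notin\Aut$, so the group case must be handled separately in any event).

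Second, for $c>0$ the step ``$\varphi_1=\psi_c$, whence $(\varphi_s)=(\psi_{cs})$, making the conclusion trivial'' is an unjustified leap. That $\varphi_1=\psi_c$ forces the semigroup $(\varphi_s)$ to coincide with $(\psi_{cs})$ is a uniqueness-of-embedding statement, and deducing commutativity of the two semigroups merely from the fact that $\varphi_1$ belongs to $(\psi_t)$ is itself an instance of Problem~\ref{theproblem}, not a triviality (compare Example~\ref{Mainexample}, where $\psi_1$ commutes with every $\varphi_t$ and yet the semigroups do not commute). If $c$ is irrational you could instead invoke Theorem~\ref{Th_main}\,\ref{IT_main:two-values-of-t}, since both $\psi_1$ and $\psi_c$ lie in $\Zen(\varphi_s)$; but for rational $c>0$, where $\mu$ is a root of unity and your constancy argument for $F_s$ genuinely breaks down, no argument is given. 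Both holes are reparable: e.g., when $\varphi_1=\psi_c$ (and likewise when $(\psi_t)$ is a parabolic group) one can use the uniqueness of holomorphic models (Theorem~\ref{Thm:uniqness}) to show that the Koenigs function $\widetilde H$ of $(\varphi_s)$ satisfies $\widetilde H=\alpha H+\beta$ with $\alpha c=1$, so that $\widetilde H\circ\psi_t=\widetilde H+\alpha t$, and Proposition~\ref{PR_simple} applied to $(\varphi_s)$ then yields the commutativity --- but as written your proof is incomplete at these two points.
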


This corollary is a generalization of \cite[Theorem~5(ii)]{conReich}, where the commutativity of $(\varphi_{t})$ and $(\psi_{t})$ is assured under the following three hypothesis: (i)~$\varphi_{1}$ and $\psi_{1 }$ commute; (ii)~for all~${t\geq 0}$, $\varphi_{t}$~and $\psi_{t}$ have unrestricted limits at the common Denjoy\,--\,Wolff point~$\tau$;  and also (iii)~the unrestricted limits $\lim_{z\to \tau}F''(z)$ and $\lim_{z\to \tau}G''(z)$, where $F$ and $G$ stand for the infinitesimal generators of $(\varphi_{t})$ and $(\psi_{t})$, respectively, exist and are both different from zero. But, denoting $A:=\lim_{z\to \tau}F''(z)$ and $B:=\lim_{z\to \tau}G''(z)$, by \cite[Remark 3]{conReich}, we have that
$$
\frac{A}{2}=\lim_ {z\to \tau} \frac{\varphi_1(z)-\tau}{(z-\tau)^2}, \quad \frac{B}{2}=\lim_ {z\to \tau} \frac{\psi_1(z)-\tau}{(z-\tau)^2}.
$$
Therefore, under those three hypothesis, the unrestricted limit in \eqref{Eq:semigroups} does exist and, in fact, it is equal to~${A/B}$.

\section{Appendix: the elliptic case}
\label{Sec:elliptic case}
In \cite[Corollary~4.9]{Cowen-comm}, Cowen proved that if two holomorphic self-maps $\varphi,\psi\in\Hol(\UD)\setminus\Aut$ commute and $\psi$ is univalent and either elliptic or hyperbolic, then $\varphi$ is also univalent. Furthermore, under the same assumptions, if another self-map $\widetilde{\psi}\in\Hol(\UD)\setminus\Aut$ commutes with $\psi$, then $\widetilde{\psi}$ commutes  with $\varphi$ as well. It follows that, in the elliptic case, Problem~\ref{theproblem} has a positive answer, at least if automorphisms of~$\UD$ are excluded from consideration. Below we provide an elementary proof of this fact thus providing a positive answer to Problem~\ref{theproblem} when $\varphi$ is elliptic (except for the case of an automorphism satisfying ${\varphi^{\circ n}=\id_\UD}$ for some ${n\in\Natural}$).

\begin{proposition}\label{PR_elliptic-case}
Let $\varphi\in \Hol(\D)$ and let $(\psi_{t})$ be an elliptic continuous one-parameter semigroup in $\UD$. Suppose that $\psi_1$ is not an automorphism and that ${\varphi\circ\psi_{1}}={\psi_{1}\circ \varphi}$. Then $\varphi$ is univalent and ${\psi_{t}\in \Zen(\varphi)}$ for all ${t>0}$.
\end{proposition}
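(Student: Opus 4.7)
The approach will be to exploit the canonical Koenigs model of the elliptic semigroup $(\psi_t)$ together with the uniqueness of Schr\"oder solutions. Let $\tau\in\UD$ denote the common Denjoy\,--\,Wolff point of $(\psi_t)$. Since $\psi_1\neq\id_\UD$ is not an automorphism, Theorem~\ref{Thm:model}\,\ref{IT_HM-ell-non-auto} with the normalization of Remark~\ref{RM_normalization} provides a univalent map $h:\UD\to\C$ with $h(\tau)=0$, $h'(\tau)=1$, and $h\circ\psi_t=\lambda_t\, h$ for every $t\geq 0$, where $\lambda_t:=e^{-\nu t}$ for some $\nu\in\C$ with $\Re\nu>0$. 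In particular, $\lambda:=\lambda_1=\psi_1'(\tau)$ satisfies $0<|\lambda|<1$.

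The first step will be to verify $\varphi(\tau)=\tau$. Since $\psi_1$ is elliptic and not an elliptic automorphism, the Denjoy\,--\,Wolff theorem gives $\psi_1^{\circ n}(z)\to\tau$ for every $z\in\UD$. Commutativity then yields $\varphi(\psi_1^{\circ n}(z))=\psi_1^{\circ n}(\varphi(z))$: the right-hand side converges to $\tau$ by Denjoy\,--\,Wolff, while the left-hand side converges to $\varphi(\tau)$ by continuity of $\varphi$ at~$\tau$. Hence $\varphi(\tau)=\tau$, and I set $\mu:=\varphi'(\tau)$.

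The second step will be to derive the Schr\"oder-type identity
$$
h\circ\varphi \,=\, \mu\, h.
$$
Indeed, $F:=h\circ\varphi$ is holomorphic with $F(\tau)=0$, $F'(\tau)=\mu$, and by commutativity $F\circ\psi_1 = h\circ\psi_1\circ\varphi = \lambda\, F$. The classical uniqueness of the Koenigs solution of this Schr\"oder equation (up to a multiplicative constant) then forces $F=\mu h$. Assuming $\varphi$ is non-constant --- the degenerate case $\mu=0$ would produce $\varphi\equiv\tau$, which commutes trivially with every $\psi_t$ but is not univalent, and should be excluded --- one has $\mu\neq 0$. Univalence of $\varphi$ then follows immediately from $\varphi=h^{-1}\circ(\mu\, h)$, as $h$ is univalent on $\UD$ and $w\mapsto\mu w$ is injective on $h(\UD)$.

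Finally, the commutation $\varphi\circ\psi_t=\psi_t\circ\varphi$ for every $t\geq 0$ falls out of the model by a one-line computation:
$$
h\circ\varphi\circ\psi_t \,=\, \mu\, h\circ\psi_t \,=\, \mu\lambda_t\, h \,=\, \lambda_t\,\mu\, h \,=\, h\circ\psi_t\circ\varphi,
$$
and injectivity of $h$ yields the conclusion. There is no real obstacle in this strategy; the argument reduces to the Denjoy\,--\,Wolff fixed-point step, the canonical holomorphic model of an elliptic non-automorphism, and the uniqueness of Schr\"oder solutions --- all classical and already recalled in Section~\ref{Notation}.
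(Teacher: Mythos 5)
Your proposal is correct and takes essentially the same route as the paper: linearize the semigroup by its Koenigs map and use commutativity with $\psi_1$ to force $h\circ\varphi=\mu\,h$ --- the paper obtains this by showing that $g(w)/w$, with $g:=h\circ\varphi\circ h^{-1}$, is invariant under the contraction $w\mapsto e^{\lambda}w$ and hence constant, which is exactly the mechanism underlying the Koenigs/Schr\"oder uniqueness you invoke --- and then reads off univalence and commutation with every $\psi_t$ from $\varphi=h^{-1}\circ(\mu h)$. Your explicit exclusion of the degenerate constant map $\varphi\equiv\tau$ (the case $\mu=0$) is a legitimate and even slightly more careful point, since the paper's argument silently contains the same degenerate case $c=0$; it does not affect the substance of the proof.
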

\begin{proof}
According to the hypothesis of the proposition and Subsections \ref{Sec:modelos} and  \ref{SS_one-param-semigr}, $(\psi_t)$ admits a holomorphic model of the form ${\big(\C,H,(z\mapsto e^{\lambda t}z)_{t\ge0}\big)}$ with ${\Re\lambda<0}$.
Write $\Omega=H(\D)$. Consider $g:=H\circ \varphi \circ H^{-1}:\Omega \to \Omega$. Since $\psi^{\circ n}$ commutes with $\varphi$ for all ${n\in\N}$, we have
$$
g(e^{-\lambda n}w)=e^{-\lambda n}g(w), \quad \textrm{ for all }~w\in \Omega~\text{ and all }~n\in\N.
$$
In particular, ${g(0)=0}$ and it follows that the function $f(w):={g(w)/w}$, ${w\in \Omega\setminus \{0\}}$, has a holomorphic extension to ${w=0}$. Moreover,
$$
f(e^{-\lambda n}w)=\frac{e^{-\lambda n}g(w)}{e^{-\lambda n}w}=f(w) \quad \textrm{ for all }~w\in \Omega~\text{ and all }~n\in\N.
$$
Passing to the limit as~${n\to+\infty}$, we have that $f(w)=f(0)=:c$ for all ${w\in\Omega}$, i.e. ${g(w)=cw}$ in~$\Omega$. Therefore, $\varphi$ is univalent and commutes with $\psi_{t}$ for all ${t>0}$.
\end{proof}

The situation is different for automorphisms. More precisely, suppose that $\psi_1$ is an elliptic automorphism with the fixed point ${\tau\in\UD}$, and hence so are all $\psi_t$'s. Write $\mu:=\psi_1'(\tau)$.  It is not difficult to show, see e.g. \cite[Proposition~2.6.10]{Abate2}, that:
\begin{romlist}
\item\label{IT_irratioan-rotation} if the multiplier $\mu$ is not a root of the unity and if ${\varphi\in\Hol(\UD)}$ commutes with~$\psi_1$, then $\varphi$ is a linear-fractional map with the same fixed points as $\psi_1$; hence, in this case, $\varphi$ commutes with~$\psi_t$ for all~${t>0}$.
\item\label{IT_rational-rotation} However, if $\mu^{n}=1$ for some ${n\in\Natural}$, then ${\varphi\in\Hol(\UD)}$ commutes with~$\psi_1$ if and only if $\varphi$ is of the form $\varphi={\gamma^{-1}\circ \big(z\mapsto z \phi(z^n)\big) \circ \gamma}$, where $\phi\in\Hol(\UD)$ and $\gamma(z):={(z-\tau)/(1-\overline{\tau}z)}$.
\end{romlist}
An argument similar to that given in the proof of Proposition~\ref{PR_elliptic-case} shows that in case~\ref{IT_rational-rotation}, $\varphi$ commutes with all~$\psi_t$'s if and only if the function~$\phi$ is constant, which means that there are many $\varphi$'s that commute with $\psi_1$ but not with  the whole semigroup~$(\psi_t)$.

\end{document}